\documentclass{amsart}

\usepackage{amsmath,amssymb}
\usepackage{enumitem}
\usepackage{array}
\usepackage{tikz}

%%%%%%%%%%%%%%%%%%%%%%%%%%%%%%%%%%%%%%%%%%%%%%%%%%%%%%%%%%%%
%  Environments
%%%%%%%%%%%%%%%%%%%%%%%%%%%%%%%%%%%%%%%%%%%%%%%%%%%%%%%%%%%%

\newtheorem{theorem}{Theorem}[section]
\newtheorem{lemma}[theorem]{Lemma}
\newtheorem{proposition}[theorem]{Proposition}
\newtheorem{corollary}[theorem]{Corollary}
\newtheorem{conjecture}[theorem]{Conjecture}

\theoremstyle{definition}
\newtheorem{definition}[theorem]{Definition}

\theoremstyle{remark}

\numberwithin{equation}{section}

%%%%%%%%%%%%%%%%%%%%%%%%%%%%%%%%%%%%%%%%%%%%%%%%%%%%%%%%%%%%
%  MACROS for this particular document
%%%%%%%%%%%%%%%%%%%%%%%%%%%%%%%%%%%%%%%%%%%%%%%%%%%%%%%%%%%%

% MISC
\newcommand{\newword}[1]{\emph{\textbf{#1}}}

% OBJECTS
\newcommand{\comp}[1]{\mathbf{#1}}
\newcommand{\wt}{\mathbf{wt}}
\newcommand{\D}{\mathbb{D}}

% SETS
\newcommand{\KD}{\mathrm{KD}}
\newcommand{\SSYT}{\mathrm{SSYT}}

% POLYNOMIALS
\newcommand{\kohnert}{\mathfrak{K}}
\newcommand{\key}{\kappa}

% MODULES
\newcommand{\GL}{\mathrm{GL}}
\newcommand{\Schur}{\mathcal{S}}
\newcommand{\fSchur}{\Schur^{\mathrm{flag}}} % maybe just super "fl"?
\newcommand{\ch}{\mathrm{char}} % maybe just use "ch"?

% CRYSTAL OPERATORS
\newcommand{\B}{\mathcal{B}}
\newcommand{\Dem}{\mathfrak{D}}

% RECTIFICATION
\newcommand{\Rect}{\operatorname{rect}}
\newcommand{\Label}{\mathcal{L}}

%%%%%%%%%%%%%%%%%%%%%%
% macreau for tableaux
%%%%%%%%%%%%%%%%%%%%%%
\newlength\cellsize \setlength\cellsize{8\unitlength}

\savebox2{% BOX
\begin{picture}(8,8)
\put(0,0){\line(1,0){8}}
\put(0,0){\line(0,1){8}}
\put(8,0){\line(0,1){8}}
\put(0,8){\line(1,0){8}}
\end{picture}}

\newcommand\boxify[1]{\def\thearg{#1}\def\nothing{}%
\ifx\thearg\nothing\vrule width0pt height\cellsize depth0pt%
  \else\hbox to 0pt{\usebox2\hss}\fi%
  \vbox to \cellsize{\vss\hbox to \cellsize{\hss$_{#1}$\hss}\vss}}

\savebox3{% CIRCLE
\begin{picture}(8,8)
\put(4,4){\circle{8}}
\end{picture}}

\newcommand{\circify}[1]{\def\thearg{#1}\def\nothing{}%
\ifx\thearg\nothing\vrule width0pt height\cellsize depth0pt%
  \else\hbox to 0pt{\usebox3\hss}\fi%
  \vbox to \cellsize{\vss\hbox to \cellsize{\hss$_{#1}$\hss}\vss}}

\newcommand\nullify[1]{\def\thearg{#1}\def\nothing{}%
\ifx\thearg\nothing\vrule width0pt height\cellsize depth0pt%
  \else\hbox to 0pt{\hss}\fi%
  \vbox to \cellsize{\vss\hbox to \cellsize{\hss$_{#1}$\hss}\vss}}

\newcommand\tableau[1]{\vtop{\let\\=\cr
\setlength\baselineskip{-8000pt}
\setlength\lineskiplimit{8000pt}
\setlength\lineskip{0pt}
\halign{&\boxify{##}\cr#1\crcr}}}

\newcommand\cirtab[1]{\vline\vtop{\let\\=\cr
\setlength\baselineskip{-8000pt}
\setlength\lineskiplimit{8000pt}
\setlength\lineskip{0pt}
\halign{&\circify{##}\cr#1\crcr}}}

\newcommand\nulltab[1]{\vtop{\let\\=\cr
\setlength\baselineskip{-8000pt}
\setlength\lineskiplimit{8000pt}
\setlength\lineskip{0pt}
\halign{&\nullify{##}\cr#1\crcr}}}

%%%%%%%%%%%%%%%%%%%%%%
% fancy comments
%%%%%%%%%%%%%%%%%%%%%%
\usepackage{xcolor}
\usepackage[colorinlistoftodos]{todonotes}

\begin{document}

%%%%%%%%%%%%%%%%%%%%%%%%%%%%%%%%%%%%%%%%%%%%%%%%%%%%%%%%%%%%
%  TITLE PAGE information
%%%%%%%%%%%%%%%%%%%%%%%%%%%%%%%%%%%%%%%%%%%%%%%%%%%%%%%%%%%%

%     [Short Title]{Full Title}
\title[Kohnert's rule for flagged Schur modules]{Kohnert's rule for flagged Schur modules}

\author[Armon]{Sam Armon}
\email{armon@usc.edu}

\author[Assaf]{Sami Assaf}
\address{Department of Mathematics, University of Southern California, 3620 S. Vermont Ave., Los Angeles, CA 90089-2532, U.S.A.}
\email{shassaf@usc.edu}
\thanks{Work supported in part by NSF DMS-1763336.}

\author[Bowling]{Grant Bowling}
\email{gbowling@usc.edu}

\author[Ehrhard]{Henry Ehrhard}
\email{hehrhard@usc.edu}

%    General info
% \subjclass[2010]{Primary ; Secondary }

%\dedicatory{}

\keywords{Flagged Schur modules, Kohnert polynomials, Demazure crystals}

\begin{abstract}
  Flagged Schur modules generalize the irreducible representations of the general linear group under the action of the Borel subalgebra. Their characters include many important generalizations of Schur polynomials, such as Demazure characters, flagged skew Schur polynomials, and Schubert polynomials. In this paper, we prove the characters of flagged Schur modules can be computed using a simple combinatorial algorithm due to Kohnert if and only if the indexing diagram is northwest. This gives a new proof that characters of flagged Schur modules are nonnegative sums of Demazure characters and gives a representation theoretic interpretation for Kohnert polynomials.
\end{abstract}

\maketitle
%\tableofcontents        

%%%%%%%%%%%%%%%%%%%%%%%%%%%%%%%%%%%%%%%%%%%%%%%%%%%%%%%%%%%%%%%%
%
\section{Introduction}
%
%%%%%%%%%%%%%%%%%%%%%%%%%%%%%%%%%%%%%%%%%%%%%%%%%%%%%%%%%%%%%%%%
\label{sec:introduction}

The irreducible representations of the general linear group were constructed explicitly by Weyl using the Ferrers diagrams of partitions. Similarly, for any \emph{diagram} $D$, a finite collection of cells in $\mathbb{Z}_+ \times \mathbb{Z}_+$, one can construct the \emph{Schur module} $\Schur_{D}$ for the general linear group. These modules have been widely studied, and their characters include generalizations of the Schur polynomials such as skew Schur polynomials and Stanley symmetric polynomials. More generally, one can construct the \emph{flagged Schur module} $\fSchur_{D}$, which carries the action of the Borel subgroup of lower triangular matrices. These modules are also well-studied for certain families of diagrams, and their characters include type A Demazure characters \cite{Dem74a}, flagged skew Schur polynomials \cite{RS95}, and Schubert polynomials \cite{LS82}.

The Borel-Weyl theorem also realizes irreducible representations for the general linear group as spaces of sections of line bundles of the flag manifold. In a similar way, this classical construction can be generalized to configuration varieties $\mathfrak{F}_D$ of families of subspaces of a fixed vector space with incidence relations specified by a diagram $D$. When the diagram $D$ has the \emph{northwest} property, Magyar \cite{Mag98} gave an explicit resolution proving $\mathfrak{F}_D$ is normal with rational singularities. Since the corresponding line bundle has no higher cohomology, its space of sections recovers the Schur module $\fSchur_D$. Magyar \cite{Mag98-2} uses the geometry to give a recurrence for the character in terms of degree-preserving divided difference operators that arise in Demazure's character formula \cite{Dem74a}. Reiner and Shimozono \cite{RS98} use this recurrence to show the characters of the corresponding flagged Schur modules expand nonnegatively into Demazure characters. 

In this paper, we prove the characters of flagged Schur modules for northwest diagrams can be computed  using Kohnert's rule \cite{Koh91}, thus resolving a conjecture of Assaf and Searles \cite{AS19}. They defined \emph{Kohnert polynomials} $\kohnert_D$ for diagrams $D$ using Kohnert's rule \cite{Koh91}, giving a common generalization of Demazure characters \cite{Koh91} and Schubert polynomials \cite{Ass-KR}. We prove the Kohnert polynomial $\kohnert_D$ is the character of the flagged Schur module $\fSchur_D$ for $D$ a northwest diagram by using the Demazure crystal structure for Kohnert polynomials \cite{Ass-KC} to show they also satisfy Magyar's recurrence. 

Our paper is structured as follows. In Section~\ref{sec:modules}, we review flagged Schur modules and Kohnert polynomials, along with the associated combinatorics. We state Magyar's recurrence for characters of flagged Schur modules, and begin to show it holds for Kohnert polynomials as well. In Section~\ref{sec:crystals}, we review crystals and reinterpret the last term in the desired recurrence for Kohnert polynomials as a statement about Demazure operators on crystals. In Section~\ref{sec:labeling}, we delve into the combinatorics of Kohnert diagrams to prove the desired result for Demazure operators on Kohnert crystals. Thus we prove Kohnert polynomials satisfy the same recurrence, and so coincide with characters of flagged Schur modules.

Our results have several corollaries. Using recent work of Assaf \cite{Ass-KC}, we give a new proof that characters of flagged Schur modules decompose as a nonnegative sum of Demazure characters. Our results also give an explicit representation theoretic interpretation for Kohnert polynomials indexed by northwest diagrams. Since Schubert polynomials are known to be characters of flagged Schur modules indexed by northwest shapes \cite{KP87,KP04}, this also gives a new proof of Kohnert's rule for Schubert polynomials \cite{Ass-KR}.

Magyar considered the more general class of $\%$-avoiding diagrams, which includes northwest diagrams. We show our result is tight in the sense that for $\%$-avoiding diagrams $D$ that are not northwest, the Kohnert polynomial does not agree with characters of the flagged Schur modules.

%%%%%%%%%%%%%%%%%%%%%%%%%%%%%%%%%%%%%%%%%%%%%%%%%%%%%%%%%%%%%%%%
%
\section{Modules and polynomials}
%
%%%%%%%%%%%%%%%%%%%%%%%%%%%%%%%%%%%%%%%%%%%%%%%%%%%%%%%%%%%%%%%%
\label{sec:modules}

In this section, we define the basic objects of study for this paper: flagged Schur modules, Demazure characters, and Kohnert polynomials.

%%%%%%%%%%%%%%%%%%%%%%%%%%%%%%%%%%%%%%%%%%%%%%%%%%%%%%%%%%%%%%%%
\subsection{Schur modules for diagrams}
%%%%%%%%%%%%%%%%%%%%%%%%%%%%%%%%%%%%%%%%%%%%%%%%%%%%%%%%%%%%%%%%
\label{sec:modules-diagram}

Consider the infinite, doubly indexed family of indeterminates $\{z_{i,j}\}_{i,j=1}^{\infty}$. A matrix $A \in \GL_N$ act on $\mathbb{C}[z_{i,j}]$ in the usual way,
  \[A \cdot z_{k,l} =
  \begin{cases}
    \sum_{i=1}^{N} A_{i,k} z_{i,l} & k \leq N \\
    z_{k,l} & k>N
  \end{cases}. \]

A \newword{diagram} $D$ is a finite collection of \emph{cells} in $\mathbb{Z}_+ \times \mathbb{Z}_+$. We use matrix convention, so that the northwest corner has index $(1,1)$. For example, Fig.~\ref{fig:diagrams} shows the Ferrers diagram for a partition, the skew diagram for nested partitions, the key diagram for a weak composition, the Rothe diagram for a permutation, and a generic diagram.

\begin{figure}[ht]
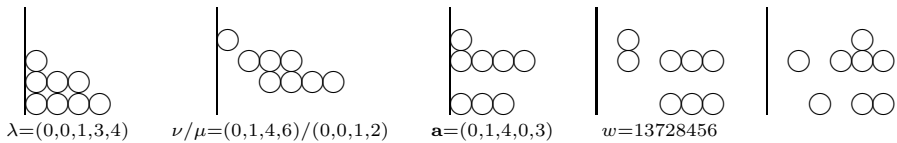

    \begin{displaymath}
    \arraycolsep=\cellsize
    \begin{array}{ccccc}
    \cirtab{ & \\ & \\ \ \\ \ & \ & \ \\ \ & \ & \ & \ } & 
    \cirtab{ & \\ \ & \\ & \ & \ & \  \\ & & \ & \ & \ & \  \\ & } &
    \cirtab{ & \\ \ \\ \ & \ & \ & \ \\ & \\ \ & \ & \ } &
    \cirtab{ & \\& \ \\ & \ & & \ & \ & \ \\ & \\ & & & \ & \ & \ } &
    \cirtab{ & \\ & & & & \ \\ & \ & & \ & \ & \ \\ & \\ & & \ & & \ & \ } \\
    \scriptstyle \lambda=(0,0,1,3,4)         &
    \scriptstyle \nu/\mu=(0,1,4,6)/(0,0,1,2) &
    \scriptstyle \comp{a}=(0,1,4,0,3)        &
    \scriptstyle w = 13728456                &
    \end{array}
  \end{displaymath}
\caption{\label{fig:diagrams}Examples of diagrams.}
\end{figure}

A \newword{tableau} or \newword{filling} on $D$ is any map $T:D\rightarrow\mathbb{N}$. Abusing notation, we let $D$ also denote the \emph{row tableau} on the shape $D$ where each cell is assigned its row index. For example, Fig.~\ref{fig:tableaux} shows a generic tableau (left) and the row tableau (right).

\begin{figure}[ht]
    \begin{displaymath}
      T = {\vline\tableau{1 \\ 3 & & 3 \\  & 2 & 4 } }
      \hspace{2em}
      D = {\vline\tableau{ 1 \\ 2 & & 2 \\ & 3 & 3 }}
    \end{displaymath}
\caption{\label{fig:tableaux}Two tableaux for a diagram $D$.}
\end{figure}

\begin{definition}
    For $D$ a diagram and $T$ a tableau of shape $D$, set
    \begin{equation}
      \Delta_T = \prod_j ( T^{(j)} \mid D^{(j)} )
    \end{equation}
    where $T^{(j)}$ is the array of values in the $j$th column of $T$, and for arrays $\comp{a}, \comp{b} \in\mathbb{N}^m$, we set $(\comp{a} \mid \comp{b}) = \det(z_{a_i,b_j}) \in \mathbb{C}[z_{i,j}]$.
\end{definition}

For example, taking $T$ to be the left tableau in Fig.~\ref{fig:tableaux}, we have
    \[ \Delta_T =
    \begin{vmatrix}
      z_{11} & z_{12} \\ z_{31} & z_{32} 
    \end{vmatrix}
    \begin{vmatrix}
      z_{23}
    \end{vmatrix}
    \begin{vmatrix}
      z_{32} & z_{33} \\ z_{42} & z_{43}
    \end{vmatrix} . \]
Notice $\Delta_T=0$ whenever $T$ has repeated column entries.

\begin{definition}
  The \newword{Schur module} $\Schur_D$ is the $\mathbb{C}$-span of $\{ \Delta_T \mid T:D \rightarrow [n]\}$.
\end{definition}
  
In particular, taking $D$ to be the Ferrers diagram of the partition $\lambda$ as in the leftmost diagram of Fig.~\ref{fig:diagrams}, the Schur module $\Schur_D$ is the irreducible representation of $\GL_n$ indexed by $\lambda$, which we denote by $\Schur_{\lambda}$.

For $B \subset \GL_n$ the subalgebra of upper triangular matrices, there is a $B$-stable ideal $I = \langle z_{i,j} \mid i>j \rangle$. We consider the $B$-module spanned by those fillings that respect this ideal. Say a filling $T$  is \newword{flagged} if $T_{i,j} \leq i$.

\begin{definition}
    The \newword{flagged Schur module} $\fSchur_D$ is the $B$-module $\Schur_D / (\Schur_D \cap I)$ spanned by $\{ \Delta_T \mid T_{i,j} \leq i \}$.  
\end{definition}  
  
The \newword{character} of a $B$-module is the trace of the action of the matrix $X=\mathrm{diag}(x_1,\ldots,x_n)$. Many flagged Schur modules arise naturally in the context of geometry: for $D$ a key diagram, the module $\fSchur_{D}$ coincides with those introduced by Demazure \cite{Dem74,Dem74a} that give a filtration of irreducible modules with respect to the Weyl group; and for $D$ a Rothe diagram, the module $\fSchur_D$ coincides with the Schubert modules introduced by Kraskiewicz and Pragacz \cite{KP87,KP04} whose characters are Schubert polynomials of Lascoux and Sch{\"u}tzenberger \cite{LS82} that compute intersection multiplicities for the flag manifold.

In what follows, we study these modules through their characters, to wit the following results are useful.

The \newword{weight} of a filling $T$ is the weak composition $\wt(T)$ whose $i$th part is the number of entries of $T$ with label $i$. Given $\comp{a}$, let $x^{\comp{a}} = x_1^{a_1} \cdots x_n^{a_n}$.

\begin{lemma}\label{lem:eigen}
	For a flagged filling $T$ of $D$, $\Delta_T\pmod I$ is an eigenvector with eigenvalue $x_1^{\wt(T)_1}\cdots x_n^{\wt(T)_n}$ under the action of $X=\mathrm{diag}(x_1,\ldots,x_n)$. In particular, the monomial $x^{\wt(T)}$ appears with positive multiplicity in $\ch(\fSchur_D)$.
\end{lemma}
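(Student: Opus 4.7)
The plan is to compute the action of $X$ on $\Delta_T$ directly from the definition and then verify that $\Delta_T$ has nonzero image in $\fSchur_D$ when $T$ is flagged.

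When $X = \mathrm{diag}(x_1, \ldots, x_n)$, the defining action simplifies to $X \cdot z_{k, l} = x_k z_{k, l}$, so $X$ rescales every variable by a factor depending only on its first index. In the column factor $(T^{(j)} \mid D^{(j)}) = \det(z_{T^{(j)}_i, D^{(j)}_k})$, the $i$th row of the underlying matrix contains only variables with first index $T^{(j)}_i$, so $X$ scales that row by $x_{T^{(j)}_i}$; by multilinearity of the determinant,
\[X \cdot (T^{(j)} \mid D^{(j)}) = \Bigl(\prod_i x_{T^{(j)}_i}\Bigr)(T^{(j)} \mid D^{(j)}).\]
Multiplying across columns, each cell $(r, j) \in D$ contributes exactly one factor $x_{T_{r, j}}$, for a total eigenvalue $\prod_{(r,j) \in D} x_{T_{r,j}} = x^{\wt(T)}$. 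Since $X$ also stabilizes $I$ (each generator is scaled to a multiple of itself), this eigenvector relation descends to the quotient $\fSchur_D$.

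For the ``in particular'' clause I must confirm that $\Delta_T \pmod I$ is actually nonzero, since then $(\fSchur_D)_{\wt(T)}\neq 0$ forces $x^{\wt(T)}$ to contribute positively to $\ch(\fSchur_D)$. My plan is to argue column by column. In a single column determinant, the expansion $\sum_\sigma \mathrm{sgn}(\sigma) \prod_i z_{T^{(j)}_i, D^{(j)}_{\sigma(i)}}$ produces distinct monomials for distinct $\sigma$, since both $T^{(j)}$ and $D^{(j)}$ have pairwise distinct entries (the former because otherwise $\Delta_T = 0$ by the remark after the definition). The identity term $\prod_i z_{T^{(j)}_i, D^{(j)}_i}$ therefore cannot be cancelled within the column, and the flagged condition $T^{(j)}_i \leq D^{(j)}_i$ keeps each of its variables outside $I$. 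Thus each column factor is nonzero in $\mathbb{C}[z_{i,j}]/I \cong \mathbb{C}[z_{i,j} : i \leq j]$, and since this quotient is an integral domain, the product $\Delta_T \pmod I$ is also nonzero.

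The one mild pitfall worth noting is that the all-identity monomial $\prod_{(r,j) \in D} z_{T_{r,j}, r}$ of $\Delta_T$ need not appear with coefficient $\pm 1$, because distinct sequences of column permutations can produce the same total monomial after multiplying across columns. This is why the plan avoids tracking any one monomial of $\Delta_T$ and instead works column by column, invoking integrality of the quotient ring to conclude nonvanishing of the product.
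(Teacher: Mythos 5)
Your proposal is correct and follows essentially the same route as the paper: the same row-scaling/multilinearity computation of the eigenvalue column by column, and the same nonvanishing argument via the flagged diagonal term surviving modulo $I$ together with the fact that $\mathbb{C}[z_{i,j}]/I$ is a domain (the paper phrases this as $I$ being prime), concluding by positivity of weight-space contributions to the character. Your extra care about why the identity monomial cannot cancel within a column, and why one should not track a single monomial of the full product $\Delta_T$, only makes explicit what the paper leaves implicit.
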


\begin{proof}
	Since each entry in $T^{(j)}$ is no greater than the corresponding entry in $D^{(j)}$, the diagonal of the matrix defining $(T^{(j)} \mid D^{(j)})$ includes only indeterminates $z_{k,\ell}$ for which $k\le \ell$, ensuring that $(T^{(j)} \mid D^{(j)})\notin I$. Since $I$ is prime we can be sure that $\Delta_T\notin I$ as well. Namely $\Delta_T$ is not zero in $\fSchur_D$.
	
	Note that $X\cdot z_{k,\ell}=x_{k}z_{k,\ell}$ for any $k,\ell$. The $i$th row vector of the matrix of $(T^{(j)} \mid D^{(j)})$ contains only indeterminates $z_{k,\ell}$ with $k=T^{(j)}_i$. Hence $X\cdot (T^{(j)} \mid D^{(j)})$ is the result of multiplying each row $i$ in $(T^{(j)} \mid D^{(j)})$ by $x_{T^{(j)}_i}$. Multi-linearity of the determinant yields $X\cdot (T^{(j)} \mid D^{(j)})=x^{\wt(T^{(j)})}(T^{(j)} \mid D^{(j)})$. Therefore, 
	\[
	X\cdot\Delta_T=\prod_j x^{\wt(T^{(j)})}(T^{(j)} \mid D^{(j)})=x^{\wt(T)}\prod_j(T^{(j)} \mid D^{(j)})=x^{\wt(T)}\Delta_T.
	\]
	Take a set of flagged fillings which indexes a basis for $\fSchur_D$ in the natural way. The basis is in fact an eigenbasis for the action of a diagonal matrix and all eigenvalues are positive monomials. Since $x^{\wt(T)}$ is an eigenvalue, the result follows.
\end{proof}

Th following is used to determine when the character of a flagged Schur module differs from the Kohnert polynomial (defined below). Given position integers $r<s$, let $\alpha_{r,s}$ be the weak composition with $r$th entry $1$, $s$th entry $-1$, and all others $0$.

\begin{proposition}\label{prop:A1}
	Let $D$ be a diagram and $C$ a set of column indices in which row $s$ contains a cell but row $r<s$ does not. Then $x^{\wt(D)+|C|\alpha_{r,s}}$ occurs in the monomial expansion of $\ch(\fSchur_D)$.
\end{proposition}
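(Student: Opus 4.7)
The plan is to exhibit an explicit flagged filling $T$ of $D$ with $\wt(T) = \wt(D) + |C|\alpha_{r,s}$ and then invoke Lemma~\ref{lem:eigen} to conclude that the monomial $x^{\wt(T)}$ appears with positive multiplicity in $\ch(\fSchur_D)$. The idea is that the row tableau $D$ already witnesses $x^{\wt(D)}$, and to shift $|C|$ units of weight from the $s$th coordinate to the $r$th one we simply relabel, in each column $c \in C$, the entry in row $s$ from $s$ to $r$.

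Concretely, I would define $T$ by setting $T_{s,c} = r$ for every $c \in C$ and $T_{i,j} = i$ otherwise. Counting entries by label gives $\wt(T)_i = \wt(D)_i$ for $i \notin \{r,s\}$, while $\wt(T)_r = \wt(D)_r + |C|$ and $\wt(T)_s = \wt(D)_s - |C|$; this is exactly $\wt(D) + |C|\alpha_{r,s}$. Moreover $T$ is flagged: the only cells whose labels differ from their row index are the cells $(s,c)$ with $c \in C$, where the new label $r$ satisfies $r < s$, so $T_{s,c} = r \leq s$.

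The one point requiring the full strength of the hypothesis on $C$ is verifying that no column of $T$ has a repeated entry, so that $\Delta_T$ is nonzero in $\fSchur_D$. In the row tableau, every column already has distinct entries, namely the row indices of its cells. For $c \in C$, passing from $D$ to $T$ replaces the entry $s$ in row $s$ with $r$; by hypothesis row $r$ has no cell in column $c$, so no other entry of column $c$ equals $r$, and the distinctness is preserved. For $c \notin C$ the column of $T$ agrees with that of the row tableau, so distinctness is automatic.

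Having produced a flagged filling $T$ with distinct entries in each column and the prescribed weight, Lemma~\ref{lem:eigen} applies directly: $\Delta_T$ is a nonzero class in $\fSchur_D$ and an eigenvector for $X = \mathrm{diag}(x_1,\ldots,x_n)$ with eigenvalue $x^{\wt(T)} = x^{\wt(D) + |C|\alpha_{r,s}}$, so this monomial contributes positively to $\ch(\fSchur_D)$. There is no real obstacle here beyond identifying the right filling; the subtlety is simply recognizing that the non-occupancy condition on row $r$ is exactly what guarantees $\Delta_T \neq 0$ after the relabeling.
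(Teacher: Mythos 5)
Your proposal is correct and takes essentially the same approach as the paper: both arguments produce an explicit flagged filling from the row tableau by trading the entry $s$ for $r$ in each column of $C$ and then invoke Lemma~\ref{lem:eigen}. The only difference is that the paper re-sorts each modified column so its entries remain increasing down the column (which changes $\Delta_T$ by at most a sign and is immaterial), while your version keeps all other cells labeled by their row index and checks directly that no column entry repeats --- the same point the paper's construction secures.
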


\begin{proof}
	We will define a flagged filling $T$ on $D$. For $j\notin C$ we simply let $T^{(j)}=D^{(j)}$. Otherwise, $D^{(j)}=(a_1,\ldots, a_m)$ contains the entry $s$ but not $r$. Say $k$ is the least index so that $a_k>r$ and $a_\ell=s$. Then define
	\[
	T^{(j)}=(\ldots,a_{k-1},r,a_k,a_{k+1},\ldots, a_{\ell-1},a_{\ell+1},\ldots ).
	\]
	That is, the $k$th entry becomes $r$ and entries of index $i$ with $k<i\le\ell$ become $a_{i-1}$ with all other entries the same as in $D^{(j)}$. The second case is vacuous if $k=\ell$.
	
	Since the entries in $T$ are no greater than the corresponding entries in $D$, $T$ is indeed a flagged filling. It is clear to see that $\wt(T^{(j)})=\wt(D^{(j)})+\alpha_{r,s}$ if $j\in C$, and $\wt(T^{(j)})=\wt(D^{(j)})$ otherwise. Therefore $\wt(T)=\wt(D)+|C|\alpha_{r,s}$. The proof is completed by Lemma~\ref{lem:eigen}.
\end{proof}

%%%%%%%%%%%%%%%%%%%%%%%%%%%%%%%%%%%%%%%%%%%%%%%%%%%%%%%%%%%%%%%%
\subsection{Demazure modules and characters}
%%%%%%%%%%%%%%%%%%%%%%%%%%%%%%%%%%%%%%%%%%%%%%%%%%%%%%%%%%%%%%%%
\label{sec:modules-characters}

Each irreducible $\GL_n$-module $\Schur_{\lambda}$ decomposes into weight spaces as $\Schur_{\lambda} = \bigoplus_{\comp{a}} \Schur_{\lambda}^\comp{a}$. Demazure \cite{Dem74a} considered the $B$-action on the \emph{extremal weight spaces} of $S_{\lambda}$, those whose weak composition weight $\comp{a}$ is a rearrangement of the highest weight $\lambda$. The extremal weights are naturally indexed by the pair $(\lambda,w)$, where $\lambda$ is the partition weight and $w$ is the minimum length permutation that rearranges $\lambda$ to $\comp{a}$.

\begin{definition}[\cite{Dem74a}]
    For a partition $\lambda$ and a permutation $w$, the \newword{Demazure module} $\Schur_{\lambda}^w$ is the $B$-submodule of $\Schur_{\lambda}$ generated by the weight space $\Schur_{\lambda}^{w\cdot\lambda}$. 
\end{definition}

At the one extreme, the Demazure module $\Schur_{\lambda}^{\mathrm{id}}$ for the identity permutation is the one-dimensional subspace of $\Schur_{\lambda}$ containing the highest weight element. At the other extreme, the Demazure module $\Schur_{\lambda}^{w_0}$ for the long element of $\mathcal{S}_n$ is the full module $\Schur_{\lambda}$. In general, the Demazure modules give a filtration from the highest weight element, namely $\Schur_{\lambda}^{u} \subset \Schur_{\lambda}^{w}$ whenever $u\prec w$ in Bruhat order.

The characters of Demazure modules can be computed by degree-preserving divided difference operators \cite{Dem74}, as well as myriad combinatorial models.

For a positive integer $i$, the \newword{divided difference operator} $\partial_i$ is the linear operator that acts on polynomials $f \in \mathbb{Z}[x_1,x_2,\ldots]$ by
\begin{equation}
  \partial_i(f) = \frac{f - s_i \cdot f}{x_i - x_{i+1}} ,
\end{equation}
where $s_i \in \mathcal{S}_{\infty}$ is the simple transposition that exchanges $x_i$ and $x_{i+1}$. Fulton \cite{Ful92} connected the divided difference operators with modern intersection theory, providing a direct geometric context for Schubert polynomials \cite{LS82}. An isobaric variation, sometimes called the \newword{Demazure operator}, is defined by
\begin{equation}
  \pi_i(f) = \partial_i (x_i f).
\end{equation}

Given a permutation $w$, we may define
\begin{eqnarray*}
  \partial_w  & = & \partial_{s_{i_k}} \cdots \partial_{s_{i_1}}, \\
  \pi_w  & = & \pi_{s_{i_k}} \cdots \pi_{s_{i_1}}, 
\end{eqnarray*}
for any expression $s_{i_k} \cdots s_{i_1} = w$ with $k$ minimal. Such an expression is called a \newword{reduced expression for $w$}. Both $\partial_i$ and $\pi_i$ satisfy the braid relations, and so are independent of the choice of reduced expression for $w$. In fact, the  expression for $\pi_w$ need not be reduced, since $\pi_i(f) = f$ whenever $s_i \cdot f = f$.

\begin{theorem}[\cite{Dem74a}]
  The character of the Demazure module $\Schur_{\lambda}^w$ is
  \begin{equation}
      \ch(\Schur_{\lambda}^w) = \pi_w (x_1^{\lambda_1} \cdots x_n^{\lambda_n}).
  \end{equation}
\end{theorem}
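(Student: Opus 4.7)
The plan is to prove the identity by induction on the Coxeter length $\ell(w)$ of the permutation $w$.

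For the base case $w = \mathrm{id}$, observe that $w \cdot \lambda = \lambda$, which is the highest weight, so the extremal weight space $\Schur_\lambda^\lambda$ is one-dimensional, spanned by the highest-weight vector. The $B$-module it generates is itself, since $B$ acts by scalars on any weight space it preserves (any raising operator would produce a vector of strictly higher weight, of which there are none). Hence $\ch(\Schur_\lambda^{\mathrm{id}}) = x^\lambda$, matching $\pi_{\mathrm{id}}(x^\lambda) = x^\lambda$.

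For the inductive step, write $w = s_i u$ with $\ell(u) = \ell(w) - 1$. The key structural claim is that $\Schur_\lambda^w = P_i \cdot \Schur_\lambda^u$, where $P_i = B \cup Bs_iB$ is the minimal parabolic subgroup associated to $s_i$. One direction is immediate since $P_i$ contains $B$ and $s_i$, so $P_i \cdot \Schur_\lambda^u$ is $B$-stable and contains the extremal weight vector of weight $w \cdot \lambda = s_i u \cdot \lambda$ (obtained by applying the root subgroup $U_{-\alpha_i} \subset P_i$ to the extremal vector in $\Schur_\lambda^u$). The reverse inclusion follows from the minimality in the definition of the Demazure module, combined with the fact that in the length-additive product $w = s_i u$, the extremal weight space $\Schur_\lambda^{w\cdot\lambda}$ lies in $P_i \cdot \Schur_\lambda^u$.

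Granted this structural claim, the character identity follows from a representation-theoretic calculation on the Levi factor $L_i \cong \GL_1^{n-2} \times \GL_2$ of $P_i$. Decompose $\Schur_\lambda^u$ into weight spaces and consider the $\GL_2$-module it generates under $P_i$. Since $P_i/B \cong \mathbb{P}^1$ and the unipotent radical of $P_i$ acts trivially on characters, the computation reduces to $\mathrm{SL}_2$-representation theory: each weight $x_i^a x_{i+1}^b$ occurring in $\ch(\Schur_\lambda^u)$ with $a \geq b$ contributes to $\ch(P_i \cdot \Schur_\lambda^u)$ an entire $\mathrm{SL}_2$-string of weights $x_i^a x_{i+1}^b + x_i^{a-1}x_{i+1}^{b+1} + \cdots + x_i^b x_{i+1}^a$, which equals $\pi_i(x_i^a x_{i+1}^b)$. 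A weight with $a < b$ already belongs to such a string generated by a higher weight. Summing, $\ch(P_i \cdot \Schur_\lambda^u) = \pi_i \ch(\Schur_\lambda^u)$, so by induction
\[
\ch(\Schur_\lambda^w) = \pi_i \ch(\Schur_\lambda^u) = \pi_i \pi_u(x^\lambda) = \pi_w(x^\lambda),
\]
where the final equality uses that $\ell(w) = \ell(u)+1$ makes $s_i \cdot u$ a reduced factorization, so $\pi_i \pi_u = \pi_w$.

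The main obstacle is the structural claim $\Schur_\lambda^w = P_i \cdot \Schur_\lambda^u$; establishing the nontrivial inclusion requires showing that no proper $B$-submodule of $P_i \cdot \Schur_\lambda^u$ contains the weight space of weight $w \cdot \lambda$, which in turn relies on the length-additivity hypothesis to guarantee that passing from $u \cdot \lambda$ to $w \cdot \lambda$ strictly increases the length and is realized by the specific root subgroup $U_{-\alpha_i}$.
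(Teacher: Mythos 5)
This statement is quoted from Demazure's paper and is not proved in the text you were given, so the comparison is with the standard proofs in the literature; against that standard, your sketch has a genuine gap, and it is exactly the historically famous one. The structural claim $\Schur_\lambda^w = P_i\cdot\Schur_\lambda^u$ (meaning the $U(\mathfrak{p}_i)$-submodule generated by $\Schur_\lambda^u$) for a length-additive factorization $w=s_iu$ is fine and can be established along the lines you indicate. The step that does not follow is the character computation $\ch(P_i\cdot\Schur_\lambda^u)=\pi_i\,\ch(\Schur_\lambda^u)$. Your counting assumes that each weight vector of $\Schur_\lambda^u$ of $i$-weight $a\ge b$ spawns a \emph{new} full $\mathrm{SL}_2$-string, and that vectors with $a<b$ are absorbed into strings already counted. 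For an arbitrary $B$-submodule $M$ of an $\mathrm{SL}_2$-module this is false because of multiplicities: take $V$ the irreducible $\mathrm{SL}_2$-module with weights $x^3, x^2y, xy^2, y^3$ and $M$ the span of the top two weight vectors, which is $B$-stable; then $\pi_i(\ch M)=x^3+2x^2y+2xy^2+y^3$, while the $\mathfrak{sl}_2$-module generated by $M$ is all of $V$, with character $x^3+x^2y+xy^2+y^3$. So the identity $\ch(U(\mathfrak{p}_i)M)=\pi_i(\ch M)$ requires knowing that $\Schur_\lambda^u$ is compatible with an $i$-string decomposition of $\Schur_\lambda$ --- in modern language, that the Demazure crystal satisfies Kashiwara's string property, or equivalently that the restriction to the minimal parabolic admits a Demazure (excellent) filtration. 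Establishing this compatibility \emph{is} the hard content of the theorem; it is precisely where Demazure's original 1974 argument had its gap, and the complete proofs (Andersen and Ramanan--Ramanathan via Frobenius splitting and cohomology vanishing on Schubert varieties, Joseph, Kashiwara and Littelmann via crystal/path bases) exist to supply it. Your proposal presupposes it without argument.

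A secondary, smaller issue: in the reverse inclusion $P_i\cdot\Schur_\lambda^u\subseteq\Schur_\lambda^w$ you only discuss where the extremal weight vector lies, but what is actually needed is that $\Schur_\lambda^w$ is stable under the root subgroup $U_{-\alpha_i}$ (equivalently $\mathfrak{p}_i$-stable when $s_iw\prec w$), together with $\Schur_\lambda^u\subseteq\Schur_\lambda^w$; neither is addressed. These are standard but not free. Note also that the crystal-theoretic statement in the surrounding paper (Proposition~\ref{prop:Di}) proves $\ch(\Dem_i(X))=\pi_i(\ch(X))$ only for $X$ already known to be a union of Demazure crystals, i.e.\ it takes the string property and the Demazure character formula as inputs rather than deriving them, so it cannot be used to close your gap without circularity.
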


In particular, when $s_i w \prec w$ in Bruhat order, we have the following identity,
\begin{equation} \label{e:si}
  \ch(\Schur_{\lambda}^w) = \pi_i \left( \ch(\Schur_{\lambda}^{s_i w}) \right) .
\end{equation}

For $\D(\comp{a})$ the key (left-justified) diagram of a composition $\comp{a} = w \cdot \lambda$, we have 
\begin{equation}
    \ch(\fSchur_{\D(\comp{a})}) = \ch(\Schur_{\lambda}^w).
\end{equation}
In this sense, the flagged Schur modules generalize Demazure characters. 

Both key diagrams indexing Demazure modules and Rothe diagrams indexing Kraskiewicz--Pragacz modules exhibit the following property.

\begin{definition}\label{def:northwest}
  A diagram $D$ is \newword{northwest} if whenever $(j,k),(i,l)\in D$ with $i<j$ and $k<l$, then $(i,k)\in D$.
\end{definition}

Northwest in this context is equivalent to \newword{southwest} as used by Assaf and Searles \cite{AS19,Ass-KC} for Kohnert polynomials.

Magyar \cite{Mag98-2} gave a recurrence for computing characters of flagged Schur modules for  \%-avoiding shapes (see Definition~\ref{def:pa}). To state the recurrence, let $C_k$ denote the diagram with cells in rows $1,2,\ldots,k$ of the first column. Given a diagram $D$, let $s_r D$ denote the diagram obtained from $D$ by permuting the cells in rows $r,r+1$. Restricting Magyar's result to northwest shapes gives the following.

\begin{theorem}\label{thm:recur}
  The characters of the flagged Schur modules of northwest shapes satisfy the following recurrence:
  \begin{itemize}
      \item[(M1)] $\ch(\fSchur_{\varnothing}) = 1$;
      \item[(M2)] if the first column of $D$ is exactly $C_k$, then 
      \begin{equation}\ch(\fSchur_{D}) = x_1 x_2 \cdots x_k \ch(\fSchur_{D-C_k});\end{equation}
      \item[(M3)] if every cell in row $r$ has a cell in row $r+1$ in the same column, then \begin{equation}\label{e:M3}\ch(\fSchur_{D}) = \pi_r \left(\ch(\fSchur_{s_r D})\right).\end{equation}
  \end{itemize}
\end{theorem}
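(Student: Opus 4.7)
The plan is to handle (M1) and (M2) by direct analysis of flagged fillings, and to derive (M3) from Magyar's geometric recurrence \cite{Mag98-2} restricted to northwest shapes. Part (M1) is immediate: the empty diagram admits only the trivial filling with $\Delta_\varnothing = 1$, so $\fSchur_\varnothing \cong \mathbb{C}$ and $\ch(\fSchur_\varnothing) = 1$.

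For (M2), I would observe that when the first column of $D$ is exactly $C_k$, every flagged filling $T$ must have $T^{(1)} = (1,2,\ldots,k)$: the bound $T_{i,1} \le i$ together with distinctness of the column entries (needed so that $(T^{(1)}\mid D^{(1)})$ does not vanish) forces $T_{i,1}=i$ by induction on $i$. Modulo the ideal $I$, only the diagonal term survives in the resulting $k\times k$ determinant, giving $z_{1,1}z_{2,2}\cdots z_{k,k}$, which carries $X$-weight $x_1 x_2 \cdots x_k$ by Lemma~\ref{lem:eigen}. The remaining columns of $T$ then set up a weight-preserving correspondence between flagged fillings of $D$ and flagged fillings of $D - C_k$, yielding the desired factorization of characters. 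One must also check that $D - C_k$ inherits the northwest property, which is immediate.

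The crux is (M3), which I would derive from Magyar's theorem for $\%$-avoiding shapes. The hypothesis that every cell in row $r$ of $D$ has a cell directly below in row $r+1$ is precisely the condition under which the configuration variety $\mathfrak{F}_D$ fibers as a $\mathbb{P}^1$-bundle over $\mathfrak{F}_{s_r D}$ associated to the minimal parabolic $P_r \supset B$ corresponding to $s_r$. Since northwest diagrams are $\%$-avoiding, Magyar's explicit resolution shows $\mathfrak{F}_D$ is normal with rational singularities, so the higher cohomology of the natural line bundle $\mathcal{L}$ vanishes and $\ch(\fSchur_D)$ equals the character of $H^0(\mathfrak{F}_D, \mathcal{L})$. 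The Leray spectral sequence for the $\mathbb{P}^1$-bundle, combined with Borel-Weil on the fibers, then transforms the pushforward into the Demazure operator $\pi_r$ acting on $\ch(\fSchur_{s_r D})$. The main obstacle is precisely this vanishing of higher cohomology, which is where the northwest hypothesis pays off via Magyar's row-by-row construction; the remaining task --- checking that $s_r D$ is northwest when $D$ is northwest and the hypothesis of (M3) holds --- is a routine case analysis on pairs of cells.
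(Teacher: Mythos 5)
The paper gives no independent proof of this theorem: it is stated as the restriction to northwest diagrams of Magyar's recurrence for \%-avoiding shapes \cite{Mag98-2}, which is exactly how you handle the substantive part (M3), and your direct verifications of (M1) and (M2) via flagged fillings are routine and correct. So your proposal is essentially the same approach as the paper; the only caution is that the geometric mechanism you sketch for (M3) is loosely phrased (Magyar's statement is that the proper map $P_r\times_B \mathfrak{F}_{s_r D}\to \mathfrak{F}_D$ is birational with the requisite cohomology vanishing, not that $\mathfrak{F}_D$ is literally a $\mathbb{P}^1$-bundle over $\mathfrak{F}_{s_r D}$), but since you are citing his theorem rather than reproving it, this does not affect correctness.
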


\begin{figure}[ht]
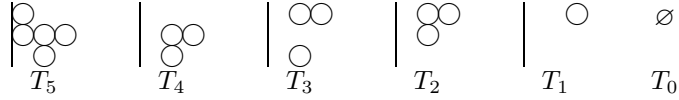

  \begin{displaymath}
    \arraycolsep=1.5\cellsize
  \begin{array}{cccccc}
    \cirtab{ \ \\ \ & \ & \ \\ & \ } & 
    \cirtab{ & \\  & \ & \ \\ & \ & } &
    \cirtab{ & \ & \ \\ & \\ & \ & } &
    \cirtab{ & \ & \ \\ & \ & \\ & } &
    \cirtab{ & & \ \\ & \\ & } &
    \varnothing \\
    T_5 &
    T_4 &
    T_3 &
    T_2 &
    T_1 &
    T_0 
  \end{array}
  \end{displaymath}
\caption{\label{fig:peeling} Applying Magyar's recurrence to a northwest diagram (left) to compute the character of the flagged Schur module. }
\end{figure}

For example, Magyar's recurrence allows us to compute the character for the flagged Schur module indexed by the leftmost diagram $T_5$ in Figure~\ref{fig:peeling} as follows:
\begin{displaymath}
\begin{array}{rll}
  \ch(\fSchur_{T_5}) = & x_1 x_2 \cdot \ch(\fSchur_{T_4}) & \text{(M2)} \\
   = & x_1 x_2 \cdot \pi_1 (\ch(\fSchur_{T_3}))  & \text{(M3)} \\
   = & x_1 x_2 \cdot \pi_1 ( \pi_2 (\ch(\fSchur_{T_2}) ) )  & \text{(M3)} \\
   = & x_1 x_2 \cdot \pi_1 ( \pi_2 (x_1 x_2 \cdot \ch(\fSchur_{T_1})) ) )  & \text{(M2)} \\
   = & x_1 x_2 \cdot \pi_1 ( \pi_2 (x_1 x_2 \cdot ( x_1 \cdot \ch(\fSchur_{T_0}))) ) )  & \text{(M2)} \\
   = & x_1 x_2 \cdot \pi_1 ( \pi_2 (x_1^2 x_2 \cdot ( 1 ) ) )  & \text{(M1)} \\
   = & x_1 x_2 \cdot \pi_1 ( x_1^2 x_2 + x_1^2 x_3 )  & \\
   = & x_1 x_2 \cdot ( x_1^2 x_2 + x_1^2 x_3 + x_1 x_2^2 + x_1 x_2 x_3 + x_2^2 x_3 ) & \\
   = & x_1^3 x_2^2 + x_1^3 x_2 x_3 + x_1^2 x_2^3 + x_1^2 x_2^2 x_3 + x_1 x_2^3 x_3  & 
\end{array}
\end{displaymath}

Two Demazure modules $\Schur_{\mu}^u$ and $\Schur_{\nu}^v$ correspond precisely when $u \cdot \mu = v \cdot \nu$. While this implies $\mu=\nu$, the permutations $u,v$ can differ. Therefore the more natural indexing set for Demazure characters is obtained by specifying the weak composition given by the permutation acting on the partition. Define
\begin{equation}
    \key_{\comp{a}} = \ch(\Schur_{\lambda}^w),
\end{equation}
where $w$ sorts the weak composition $\comp{a}$ to the weakly decreasing partition $\lambda$.

Perhaps unsurprising when comparing \eqref{e:si} and \eqref{e:M3}, Reiner and Shimozono \cite{RS98} use Magyar's recurrence for the character of flagged Schur modules \cite{Mag98-2} to prove the characters of flagged Schur modules decompose as a nonnegative sum of Demazure characters; see \cite{RS98} for precise definitions.

\begin{theorem}[\cite{RS98}]\label{thm:RS}
  For $D$ a northwest diagram, we have
  \[ \ch(\fSchur_D) = \sum_{\comp{a}} c^{D}_{\comp{a}} \key_{\comp{a}} , \]
  where $c^{D}_{\comp{a}}$ is the number of \emph{$D$-peelable tableaux} whose \emph{left-nil key} is $\comp{a}$. 
\end{theorem}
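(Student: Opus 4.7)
The plan is to proceed by induction on $|D|$, using Magyar's recurrence (Theorem~\ref{thm:recur}) as the backbone and matching each step of the recurrence against a combinatorial operation on peelable tableaux. The base case $D = \varnothing$ is immediate: the unique peelable tableau is the empty one with left-nil key $\comp{0}$, so the generating function $P_D := \sum_{\comp{a}} c^D_{\comp{a}} \key_{\comp{a}}$ evaluates to $\key_{\comp{0}} = 1 = \ch(\fSchur_{\varnothing})$.

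For the inductive step, it suffices to show that $P_D$ satisfies the same (M2) and (M3) recurrences as $\ch(\fSchur_D)$, whence the two polynomials coincide. In case (M2), when the first column of $D$ is exactly $C_k$, I would establish a bijection between $D$-peelable tableaux and $(D - C_k)$-peelable tableaux obtained by stripping the leftmost column, which is forced to carry the entries $1, 2, \ldots, k$. This bijection adds $1$ to each of the first $k$ components of the left-nil key. A column Pieri rule for Demazure characters --- asserting that $x_1 x_2 \cdots x_k \cdot \key_{\comp{b}}$ expands nonnegatively into precisely those $\key_{\comp{a}}$ whose compositions $\comp{a}$ arise from $\comp{b}$ by this shift --- then reconciles the identity $\ch(\fSchur_D) = x_1 \cdots x_k \ch(\fSchur_{D - C_k})$ with the peelable tableau count.

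Case (M3) is the heart of the argument. When each cell of row $r$ sits above a cell of row $r+1$, we need $P_D = \pi_r(P_{s_r D})$. Using the classical identity $\pi_r \key_{\comp{b}} = \key_{s_r \comp{b}}$ if $b_r > b_{r+1}$ and $\pi_r \key_{\comp{b}} = \key_{\comp{b}}$ otherwise, the task reduces to constructing a correspondence between $(s_r D)$-peelable tableaux and $D$-peelable tableaux that transports the left-nil key $\comp{b}$ either to $s_r \comp{b}$ or fixes it, with multiplicities matching the dichotomy above. This correspondence should be mediated by the row-swap $s_r D \to D$ acting on the rectified skew shape underlying the peelable tableau construction.

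The principal obstacle is exactly the (M3) compatibility: the left-nil key is defined via a nontrivial rectification procedure on tableaux, and the behavior of this invariant under the diagram-level row swap is not manifest. I would attack it through Knuth-equivalence arguments on reading words, or equivalently via the crystal structure on peelable tableaux that underlies the Demazure grading, identifying the polynomial operator $\pi_r$ with a crystal-level operation whose action on left-nil keys is precisely the $s_r$-swap dictated by the classical identity. Once these compatibilities are established, uniqueness of Magyar's recurrence forces $P_D = \ch(\fSchur_D)$, yielding the nonnegative Demazure expansion.
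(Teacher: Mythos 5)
Your overall strategy---induct through Magyar's recurrence and show that the peelable-tableau generating function $\sum_{\comp{a}} c^D_{\comp{a}}\key_{\comp{a}}$ satisfies (M1)--(M3)---is in fact the strategy of Reiner and Shimozono themselves; note that the paper you are working from does not prove this statement at all (it is quoted from \cite{RS98}), and the paper's advertised ``new proof'' of the positivity goes through Kohnert crystals and produces a \emph{different} coefficient interpretation (Yamanouchi Kohnert diagrams), not peelable tableaux. So to prove the statement as phrased you would genuinely have to carry out the peelable-tableau combinatorics, and this is where your proposal has concrete gaps rather than just omitted routine details.

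First, your (M2) step is false as stated. You claim a column-stripping bijection between $D$-peelable and $(D-C_k)$-peelable tableaux that adds $1$ to the first $k$ parts of the left-nil key, reconciled by a ``column Pieri rule'' for $x_1\cdots x_k\cdot\key_{\comp{b}}$; but $x_1\cdots x_k\key_{\comp{b}}$ is a single key polynomial $\key_{\comp{b}+(1^k)}$ only when $\min(b_1,\dots,b_k)\ge\max(b_{k+1},b_{k+2},\dots)$, and in general the counts cannot match. Concretely, take $D=\{(1,1),(2,1),(5,2)\}$, which is northwest with first column exactly $C_2$. Then $\ch(\fSchur_{D-C_2})=x_1+\cdots+x_5=\key_{(0,0,0,0,1)}$ (one peelable tableau), while $\ch(\fSchur_D)=x_1x_2(x_1+\cdots+x_5)=\key_{(1,2,0,0,0)}+\key_{(1,1,0,0,1)}$, a two-term expansion; the predicted single term $\key_{(1,1,0,0,1)}$ misses $\key_{(1,2,0,0,0)}$, and no weight-shifting bijection between the two sets of peelables can exist since they have different cardinalities. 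Any correct (M2) argument must instead match peelable counts against a genuinely multi-term Pieri-type expansion. Second, you identify (M3) as ``the heart'' but supply only a plan (``I would attack it through Knuth-equivalence arguments\dots''): the compatibility of the left-nil key with the row swap $s_rD\mapsto D$, with multiplicities respecting the dichotomy $\pi_r\key_{\comp{b}}=\key_{s_r\comp{b}}$ versus $\pi_r\key_{\comp{b}}=\key_{\comp{b}}$, is precisely the nontrivial content of \cite{RS98} and is not established here. Finally, invoking ``uniqueness of Magyar's recurrence'' also requires noting why the recurrence determines the character for every northwest diagram (i.e.\ that every nonempty northwest diagram can be reduced by (M2)/(M3)); this should at least be addressed. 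As it stands the proposal is an outline of the known proof with one incorrect step and the decisive lemma missing.
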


One consequence of our main result is a new proof of this positivity and a new combinatorial interpretation for the coefficients.

%%%%%%%%%%%%%%%%%%%%%%%%%%%%%%%%%%%%%%%%%%%%%%%%%%%%%%%%%%%%%%%%
\subsection{Kohnert polynomials}
%%%%%%%%%%%%%%%%%%%%%%%%%%%%%%%%%%%%%%%%%%%%%%%%%%%%%%%%%%%%%%%%
\label{sec:modules-kohnert}

Kohnert gave a combinatorial model for Demazure characters \cite{Koh91} in terms of the following operation on diagrams.

\begin{definition}[\cite{Koh91}]
  A \newword{Kohnert move} on a diagram selects the rightmost cell of a given row and moves the cell up, staying within its column, to the first available position above, if it exists, jumping over other cells in its way as needed. 
\label{def:kohnert_move}
\end{definition}

Given a diagram $D$, let $\KD(D)$ denote the set of diagrams that can be obtained by some sequence of Kohnert moves on $D$. For example, Fig.~\ref{fig:KohnertPosetNW} shows the Kohnert diagrams for the bottom diagram, where an edge from $S$ up to $T$ indicates $T$ can be obtained from $S$ by a single Kohnert move. 

\begin{figure}[ht]
 \begin{center}
    \begin{tikzpicture}[xscale=1.75,yscale=1.6]
    \node at (3,1)     (A) {$\cirtab{ & \\ ~ & \\ ~ & ~ \\ & ~ \\}$};
    \node at (2,2)     (B) {$\cirtab{ ~ & \\ & \\ ~ & ~ \\ & ~ \\}$};
    \node at (3,2)     (C) {$\cirtab{ & \\ ~ & ~ \\ ~ & \\ & ~ \\}$};
    \node at (4,3)     (D) {$\cirtab{ & \\ ~ & ~ \\ ~ & ~ \\ & \\}$};   
    \node at (2,3)     (E) {$\cirtab{ ~ & \\ & ~ \\ ~ & \\ & ~ \\}$};
    \node at (3,4)     (F) {$\cirtab{ ~ & \\ & ~ \\ ~ & ~ \\ & \\}$};
    \node at (3,3)     (G) {$\cirtab{ & ~ \\ ~ & \\ ~ & \\ & ~ \\}$};
    \node at (2,4)     (H) {$\cirtab{ ~ & \\ ~ & ~ \\ & \\ & ~ \\}$};
    %%%\node at (4,4)     (I) {$\cirtab{ & \\ & ~ \\ ~ & \\ ~ & ~ \\}$};
    \node at (1,4)     (J) {$\cirtab{ ~ & ~ \\ & \\ ~ & \\ & ~ \\}$};
    \node at (2,6)     (K) {$\cirtab{ ~ & ~ \\ & ~ \\ ~ & \\ & \\}$};
    \node at (4,4)     (L) {$\cirtab{ & ~ \\ ~ & \\ ~ & ~ \\ & \\}$};
    \node at (5,5)     (M) {$\cirtab{ & ~ \\ ~ & ~ \\ ~ & \\ & \\}$};
    \node at (2,5)     (N) {$\cirtab{ ~ & ~ \\ ~ & \\ & \\ & ~ \\}$};
    \node at (3,5)     (O) {$\cirtab{ ~ & ~ \\ & \\ ~ & ~ \\ & \\}$};
    \node at (4,5)     (P) {$\cirtab{ ~ & \\ ~ & ~ \\ & ~ \\ & \\}$};
    \node at (3,6)     (Q) {$\cirtab{ ~ & ~ \\ ~ & \\ & ~ \\ & \\}$};
    \node at (3,7)     (S) {$\cirtab{ ~ & ~ \\ ~ & ~ \\ & \\ & \\}$};
    \draw[thin] (A) -- (B) ;
    \draw[thin] (A) -- (C) ;
    \draw[thin] (A) -- (D) ;
    \draw[thin] (B) -- (E) ;
    \draw[thin] (B) -- (F) ;
    \draw[thin] (C) -- (G) ;
    \draw[thin] (C) -- (H) ;
    \draw[thin] (C) -- (D) ;
    \draw[thin] (D) -- (L) ;
    \draw[thin] (D) -- (M) ;
    \draw[thin] (E) -- (F) ;
    \draw[thin] (E) -- (J) ;
    \draw[thin] (E) -- (H) ;
    \draw[thin] (F) -- (K) ;
    \draw[thin] (F) -- (O) ;
    \draw[thin] (G) -- (J) ;
    \draw[thin] (G) -- (L) ;
    \draw[thin] (G) -- (N) ;
    \draw[thin] (H) -- (N) ;
    \draw[thin] (H) -- (P) ;
    \draw[thin] (L) -- (O) ;
    \draw[thin] (L) -- (M) ;
    \draw[thin] (J) -- (N) ;
    \draw[thin] (J) -- (O) ;
    \draw[thin] (K) -- (S) ;
    \draw[thin] (L) -- (M) ;
    \draw[thin] (L) -- (O) ;
    \draw[thin] (M) -- (S) ;
    \draw[thin] (N) -- (Q) ;
    \draw[thin] (O) -- (K) ;
    \draw[thin] (P) -- (Q) ;
    \draw[thin] (P) -- (S) ;
    \draw[thin] (Q) -- (S) ;
    \end{tikzpicture}
    \caption{\label{fig:KohnertPosetNW}The poset of Kohnert diagrams for the bottom diagram.}
  \end{center}
\end{figure}
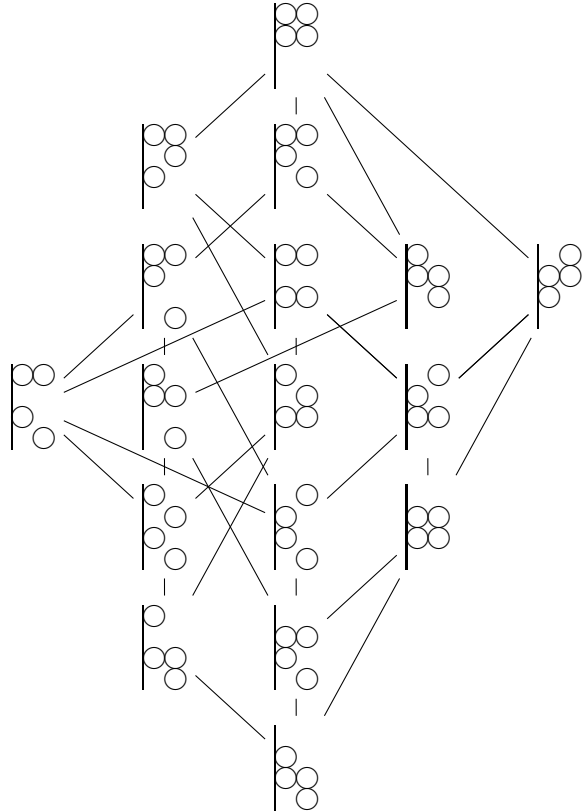

\begin{theorem}[\cite{Koh91}]
  The Demazure character $\key_{\comp{a}}$ is 
  \[ \key_{\comp{a}} = \sum_{T \in \KD(\D(\comp{a}))} x_1^{\wt(T)_1} \cdots x_n^{\wt(T)_n}, \]
  where the \newword{weight} of a diagram $D$, denoted by $\wt(D)$, is the weak composition whose $i$th part is the number of cells of $D$ in row $i$.
  \label{thm:kohnert}
\end{theorem}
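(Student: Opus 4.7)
The plan is to verify that the sum $\kohnert_{\D(\comp{a})} := \sum_{T \in \KD(\D(\comp{a}))} x^{\wt(T)}$ obeys the same recurrence as the Demazure character, namely $\kohnert_{\D(\lambda)} = x^{\lambda}$ when $\lambda$ is weakly decreasing, and
\[
\kohnert_{\D(\comp{a})} = \pi_i\bigl(\kohnert_{\D(s_i\comp{a})}\bigr)
\]
whenever $a_i < a_{i+1}$. The theorem then follows by induction on the length of the permutation that sorts $\comp{a}$ to partition shape.

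The base case is direct: if $\comp{a} = \lambda$ is weakly decreasing then $\D(\lambda)$ is the Ferrers diagram, and since $\lambda_{j-1}\ge\lambda_{j}$, the column above the rightmost cell of row $j$ is completely filled. Thus no Kohnert move is possible, $\KD(\D(\lambda)) = \{\D(\lambda)\}$, and the generating sum reduces to $x^{\lambda} = \key_{\lambda}$.

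For the inductive step, I would endow $\KD(\D(\comp{a}))$ with a type $A$ crystal structure in which the operators $e_i, f_i$ move a distinguished cell between rows $i$ and $i+1$; the distinguished cell is selected by a right-to-left pairing of the cells of row $i$ against those of row $i+1$, analogous to the standard bracketing rule for semistandard tableaux. The crucial claim is that this crystal is a \emph{Demazure subcrystal}: when $a_i < a_{i+1}$, the set $\KD(\D(\comp{a}))$ is exactly the $f_i$-closure of $\KD(\D(s_i\comp{a}))$ inside the ambient crystal. Given this, the Demazure character formula delivers the desired recurrence $\kohnert_{\D(\comp{a})} = \pi_i(\kohnert_{\D(s_i\comp{a})})$. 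The main obstacle is verifying the Demazure-closure property: establishing a weight-preserving correspondence between Kohnert diagrams of $\D(\comp{a})$ and the $f_i$-images of Kohnert diagrams of $\D(s_i\comp{a})$, with neither missing nor extraneous elements. This forces a careful case analysis of how Kohnert moves interact with the pairing procedure — especially moves that shift cells in rows other than $i, i+1$, which must correspond to crystal isomorphisms along $i$-strings. This combinatorial analysis is precisely the content of \cite{Ass-KC}, whose Demazure-crystal structure on $\KD(\D(\comp{a}))$ would be invoked to complete the argument.
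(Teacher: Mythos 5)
The paper does not actually prove this statement: Theorem~\ref{thm:kohnert} is Kohnert's theorem, quoted from \cite{Koh91} and used as background (indeed, the paper's own Lemma~\ref{lem:left-justified} invokes its crystal-theoretic refinement, $\KD(\D(\comp{a}))\cong\B^w_\lambda$ from \cite{Ass-KC}, as an input). So there is no in-paper argument to match; what you have written is a sketch of the modern proof route, essentially the key-diagram special case of the Magyar-style recurrence that the paper runs for general northwest diagrams. Your base case is correct: for weakly decreasing $\lambda$ every rightmost cell has a full column above it, so no Kohnert move applies and the sum is $x^\lambda=\key_\lambda$. The inductive scheme $\key_{\comp{a}}=\pi_i(\key_{s_i\comp{a}})$ for $a_i<a_{i+1}$, together with Proposition~\ref{prop:Di}, is also the right scaffolding.

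The gap is that the entire content of the theorem sits inside your ``crucial claim'' that $\KD(\D(\comp{a}))$ is exactly the $\Dem_i$-closure of $\KD(\D(s_i\comp{a}))$, and you do not prove it --- you defer it to \cite{Ass-KC}. This puts you in an awkward dichotomy. If you are allowed to cite the Demazure crystal structure of \cite{Ass-KC} (Theorem~\ref{thm:5.3.4} specialized to key diagrams, which identifies $\KD(\D(\comp{a}))$ with $\B^w_\lambda$ for $w$ the sorting permutation), then Kohnert's theorem follows in one line by taking characters, and your recursion and induction are superfluous. If you are not, then the closure claim requires genuine work of the same order as this paper's Sections~3--4: one must show closure of $\KD(\D(\comp{a}))$ under the raising operators, match highest weights between $\KD(\D(s_i\comp{a}))$ and $\KD(\D(\comp{a}))$ (the analogue of Theorem~\ref{thm:hwt}), and prove closure under $f_i$ when row $i$ is contained in row $i+1$ (the analogue of Theorem~\ref{thm:close_down}); none of this is supplied by the bracketing-rule description of the operators alone. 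Note also the near-circularity hazard: within this paper's toolkit the left-justified lowering closure (Lemma~\ref{lem:left-justified}) is itself proved by citing $\KD(\D(\comp{a}))\cong\B^w_\lambda$, so it cannot be used to give an independent proof of the very statement at issue. To make your outline self-contained you would need either Kohnert's or Winkel's original argument, or the Kohnert-tableaux bijection of \cite{AS18}, or a direct proof of the crystal closure properties for key diagrams.
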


Assaf and Searles define the Kohnert polynomial of any diagram as follows.

\begin{definition}[\cite{AS19}]
  The \newword{Kohnert polynomial} for a diagram $D$ is 
  \[ \kohnert_{D} = \sum_{T \in \KD(D)} x_1^{\wt(T)_1} \cdots x_n^{\wt(T)_n}. \]
  By convention, set $\kohnert_{\varnothing}=1$ for the empty diagram.
\end{definition}

Assaf and Searles conjectured \cite{AS19} and Assaf proved \cite{Ass-KC} that for $D$ northwest, the Kohnert polynomial expands nonnegatively into Demazure characters. 

\begin{theorem}[\cite{Ass-KC}]\label{thm:kohnert-dem}
  For $D$ northwest, we have
  \[ \kohnert_D = \sum_{\comp{a}} c^D_{\comp{a}} \key_a, \]
  where $c^{D}_{\comp{a}}$ is the number of \emph{Yamanouchi Kohnert diagrams} for $D$ of weight $\comp{a}$. 
\end{theorem}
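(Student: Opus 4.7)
The plan is to realize $\KD(D)$ as a disjoint union of Demazure crystals whose sources are exactly the Yamanouchi Kohnert diagrams, then read off the expansion by taking characters. Since the character of the Demazure crystal $\B_w(\lambda)$ is $\key_{w\cdot\lambda}$, any such decomposition automatically produces a nonnegative Demazure expansion of $\kohnert_D$ with multiplicities counting the components.

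First I would endow $\KD(D)$ with a type $A$ crystal structure, valid for arbitrary diagrams $D$. For each $i$, define raising and lowering operators $e_i,f_i$ on a Kohnert diagram $T$ by a column-matching procedure on rows $i$ and $i+1$: pair cells that share a column, form a bracket word from the unpaired cells (row-$i$ cells as one bracket, row-$(i+1)$ cells as the other), and use the leftmost unmatched bracket to select the column in which a cell is moved between the two rows. One then verifies that this operation preserves $\KD(D)$, that $e_i$ and $f_i$ are mutual partial inverses, and that the Kashiwara string-length and weight axioms hold. This construction should not yet use the northwest hypothesis.

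Second, assuming $D$ is northwest, I would prove that every connected component of $\KD(D)$ under $\{e_i,f_i\}$ is a Demazure crystal. The source of a component - the diagram annihilated by every $e_i$ - is declared to be a Yamanouchi Kohnert diagram; let $\comp{a}$ denote its weight and $\lambda=\mathrm{sort}(\comp{a})$. One then shows the component coincides with $\B_w(\lambda)$ where $w\cdot\lambda=\comp{a}$. The forward inclusion follows by pushing the source along a reduced word for $w$ via the $f_i$, using compatibility between crystal reflections and Kohnert moves. The reverse inclusion requires that no $f_i$-move leaves the expected Bruhat interval, and this is where northwestness is essential: it forces the ``rightmost cell'' character of Kohnert moves to align with bracket-matching, so that no spurious crystal moves enlarge the component beyond a Demazure piece.

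The main obstacle is exactly this Demazure truncation step. A connected component of a combinatorial crystal is in general only a subcrystal of an SSYT crystal, not automatically a Demazure crystal, and one must pin down why northwestness prevents ``over-growth.'' A natural inductive strategy is to mirror Magyar's recurrence from Theorem~\ref{thm:recur}: show that the constructed crystal behaves well under peeling off a full first column (M2) and under $s_r$-swaps of adjacent rows nested in columns (M3), and check that these diagrammatic operations lift to crystal-theoretic operations preserving the Demazure property. Together with the base case $\KD(\varnothing)=\{\varnothing\}$, this would give the decomposition inductively, with the Yamanouchi diagrams appearing as the sources that survive the recurrence and the coefficients $c^D_{\comp{a}}$ emerging as their counts.
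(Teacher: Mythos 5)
This theorem is imported from \cite{Ass-KC}; the present paper does not prove it but only uses it, through Theorems~\ref{thm:KC-raise} and \ref{thm:5.3.4}. Your outline does follow the same broad route as \cite{Ass-KC} (crystal operators on diagrams defined by $i$-pairing, components of $\KD(D)$ shown to be Demazure crystals whose highest weight elements are the Yamanouchi Kohnert diagrams, then take characters via Proposition~\ref{prop:Di}), but your first step contains a genuine error. You assert, without the northwest hypothesis, that the operators preserve $\KD(D)$ and that $e_i,f_i$ are mutual partial inverses there. Closure of $\KD(D)$ under the \emph{raising} operators is precisely Theorem~\ref{thm:KC-raise} and already requires $D$ northwest; closure under the \emph{lowering} operators is false in general even for northwest $D$ (see the remark following Theorem~\ref{thm:KC-raise}), and indeed a substantial part of this very paper (Theorem~\ref{thm:close_down}) is devoted to proving $f_r$-closure only under the extra hypothesis that row $r$ is contained in row $r+1$. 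Moreover, if $\KD(D)$ were closed under both $e_i$ and $f_i$ and satisfied the Kashiwara axioms, every connected component would be a full highest weight crystal and $\kohnert_D$ would be a symmetric polynomial, which is already false for key diagrams, where $\kohnert_{\D(\comp{a})}=\key_{\comp{a}}$. So the Demazure truncation is not something to be added at a second stage; it is forced at the outset, and the correct setup is: operators defined on all diagrams, raising-closure only on $\KD(D)$ (northwest needed), and a weight-reversing embedding into a tableau crystal intertwining the operators.

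Your second step, that each component is a Demazure crystal, is the real content of Theorem~\ref{thm:5.3.4} and is much harder than pushing the source along a reduced word: that argument gives one inclusion, while the reverse inclusion is exactly the ``over-growth'' issue you flag, and in \cite{Ass-KC} it is handled with the labeling and rectification machinery of which Section~\ref{sec:labeling} here gives a sample. Your fallback strategy, inducting via Magyar's recurrence, is circular relative to this paper, whose main theorem \emph{derives} the recurrence for $\kohnert_D$ from the Demazure decomposition (Theorem~\ref{thm:5.3.4}) together with Theorem~\ref{thm:close_down}; to run that induction independently you would need, at the crystal level, essentially Lemma~\ref{lem:subset}, Corollary~\ref{cor:nested}, and Theorem~\ref{thm:close_down}, plus an argument that prepending a full first column preserves the property of being a disjoint union of Demazure crystals. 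As written, then, the proposal has gaps both at the construction stage and at the truncation stage.
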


For the example, the Kohnert polynomial expands into Demazure characters as
\[ \kohnert_{D} = \key_{(0,1,2,1)} + \key_{(0,2,2,0)} . \]

Comparing Theorem~\ref{thm:RS} with Theorem~\ref{thm:kohnert-dem}, one begins to suspect the following.

\begin{conjecture}[\cite{AS19}]\label{conj:main}
  For $D$ a northwest diagram, we have
  \begin{equation}
    \ch(\fSchur_{D}) = \kohnert_{D} .      
  \end{equation}
\end{conjecture}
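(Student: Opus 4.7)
The plan is to verify that $\kohnert_D$ satisfies Magyar's recurrence (M1)--(M3) from Theorem~\ref{thm:recur} for every northwest diagram $D$. Since these rules uniquely determine $\ch(\fSchur_D)$ by iteratively stripping a leading $C_k$ via (M2) after rearranging rows via (M3), as illustrated by the example following Theorem~\ref{thm:recur}, and since the convention $\kohnert_\varnothing = 1$ matches (M1), it suffices to prove that $\kohnert_D$ satisfies (M2) and (M3).

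For (M2), suppose the first column of $D$ equals $C_k$. I claim the cells of $C_k$ are invariant under every sequence of Kohnert moves applied to $D$: Kohnert moves preserve columns, so these cells can never leave column~$1$; no cell $(i,1)$ with $i\leq k$ can move upward because rows $1,\ldots,i-1$ of column~$1$ are already occupied, leaving no available target; and if $(i,1)$ happens to be the rightmost cell of row $i$, the same obstruction applies. Hence $\KD(D)$ is in weight-preserving bijection with $\KD(D-C_k)$ via removal of $C_k$, and factoring the $C_k$ contribution gives $\kohnert_D = x_1 x_2 \cdots x_k \, \kohnert_{D-C_k}$.

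The main obstacle is (M3): under the hypothesis that every cell in row $r$ of $D$ sits directly above a cell in row $r+1$, one must show $\kohnert_D = \pi_r(\kohnert_{s_r D})$. The strategy is to work at the level of the Demazure crystal structure on $\KD(D)$ provided by \cite{Ass-KC}. By Theorem~\ref{thm:kohnert-dem}, $\kohnert_D$ decomposes as a positive sum of Demazure characters whose components are connected Demazure subcrystals of $\KD(D)$, and the Demazure operator $\pi_r$ corresponds on crystals to saturating $r$-strings upward from their highest-weight elements. Thus (M3) reduces to a crystal-theoretic statement: $\KD(s_r D)$ embeds into $\KD(D)$ as exactly the union of the highest-weight segments of the $r$-strings of $\KD(D)$, and the crystal raising operator $e_r$ applied to any element of $\KD(D)$ outside the top of its $r$-string stays within $\KD(D)$.

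The hard part, which should occupy Section~\ref{sec:labeling}, is the combinatorial verification of this crystal compatibility. One must introduce a labeling of Kohnert diagrams compatible with the crystal operators $e_r, f_r$ and then show that, under the hypothesis, the Kohnert moves that exchange content between rows $r$ and $r+1$ — often via jumps over occupied cells — match the crystal lowering operator $f_r$ acting on diagrams originating in $\KD(s_r D)$. The northwest condition on $D$ is expected to enter here to ensure that cells in other rows do not obstruct the local analysis near rows $r, r+1$. The most delicate points will be \emph{exhaustivity}, that every element of $\KD(D)$ arises from some element of $\KD(s_r D)$ by a sequence of such crystal lowerings, and \emph{injectivity} of this correspondence; together they will yield $\pi_r(\kohnert_{s_r D}) = \kohnert_D$ and complete the verification of Magyar's recurrence.
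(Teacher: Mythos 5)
Your overall strategy is the same as the paper's: show $\kohnert_D$ satisfies Magyar's recurrence, with (M1) by convention, (M2) by observing the first column $C_k$ is frozen under Kohnert moves (this is exactly Lemma~\ref{lem:tower}), and (M3) recast crystal-theoretically as the identity $\KD(D)=\Dem_r(\KD(s_r D))$ together with the fact that $\Dem_r$ on crystals tracks $\pi_r$ on characters (Proposition~\ref{prop:Di}). So the skeleton is right. But the proposal stops exactly where the real work begins, and the two halves of that work are not interchangeable with what you cite. First, the closure you invoke for raising operators, that $e_r$ applied to an element of $\KD(D)$ stays in $\KD(D)$, is already known (Theorem~\ref{thm:KC-raise} of \cite{Ass-KC}) and does not suffice: the containment $\Dem_r(\KD(s_r D))\subseteq\KD(D)$ requires closure of $\KD(D)$ under the \emph{lowering} operator $f_r$ when row $r$ is contained in row $r+1$, which is false for general diagrams and is the paper's Theorem~\ref{thm:close_down}; proving it occupies Section~\ref{sec:labeling} and needs the Kohnert labeling $\Label_D$, rectification, Lemmas~\ref{lem:labeling-restriction} and~\ref{lem:row_containment}, and the left-justified base case Lemma~\ref{lem:left-justified}. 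Your plan names ``a labeling compatible with the crystal operators'' but supplies none of this content, and the northwest hypothesis enters through these lemmas in a way that has to be exhibited, not assumed.

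Second, the reverse containment $\KD(D)\subseteq\Dem_r(\KD(s_r D))$ is not a formal consequence of $\KD(s_r D)\subseteq\KD(D)$; the paper proves it by showing every highest weight element of $\KD(D)$ already lies in $\KD(s_r D)$ (Lemma~\ref{lem:hwt} and Theorem~\ref{thm:hwt}, a delicate induction constructing for each $T\in\KD(D)$ a companion $S\in\KD(s_r D)$ agreeing with $T$ away from rows $r,r+1$), yielding Corollary~\ref{cor:nested}. Your stronger claim that $\KD(s_r D)$ is ``exactly the union of the highest-weight segments of the $r$-strings of $\KD(D)$'' is more than is needed and is not what gets proven; and the ``injectivity'' you ask for plays no role, since the character identity comes from Proposition~\ref{prop:Di}. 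Note also that Proposition~\ref{prop:Di} requires knowing $\KD(s_r D)$ is a disjoint union of Demazure crystals (Theorem~\ref{thm:5.3.4} of \cite{Ass-KC}); positivity of the Demazure expansion of $\kohnert_{s_r D}$ (Theorem~\ref{thm:kohnert-dem}) alone would not justify $\ch(\Dem_r(X))=\pi_r(\ch(X))$. In short: correct architecture, but the two decisive steps ($f_r$-closure and the highest-weight matching) are only announced, not proved, so as written this is a plan rather than a proof.
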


We prove Conjecture~\ref{conj:main} by showing Kohnert polynomials of northwest diagrams satisfy the recurrence in Theorem~\ref{thm:recur}.

By definition, we have $\kohnert_{\varnothing} = 1$, establishing Theorem~\ref{thm:recur}(M1). It is straightforward to establish Theorem~\ref{thm:recur}(M2).

\begin{lemma}\label{lem:tower}
Let $D$ be a northwest diagram such that the first column is exactly $C_k$. Then $D-C_k$ is also northwest, and we have
\begin{equation} 
  \kohnert_{D} = x_1 x_2 \cdots x_k \kohnert_{D-C_k}.
\end{equation}
\end{lemma}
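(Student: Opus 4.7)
The plan is to show that the $k$ cells of $C_k$ stay put throughout any sequence of Kohnert moves starting from $D$, which yields a bijection $\KD(D) \leftrightarrow \KD(D - C_k)$ that shifts weights by $(1, \ldots, 1, 0, \ldots)$ with $k$ initial ones.

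First I would check that $D - C_k$ is northwest. Any potential violation $(j, k'), (i, l) \in D - C_k$ with $i < j$ and $k' < l$ has $k' \geq 2$, since $D - C_k$ has no column-$1$ cells. Because $(j, k'), (i, l) \in D$, the northwest property of $D$ yields $(i, k') \in D$, and $k' \geq 2$ forces $(i, k') \in D - C_k$.

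The central step is the invariant that for every $T \in \KD(D)$, the first column of $T$ equals $C_k$. I would prove this by induction on the length of a Kohnert sequence $D = D_0 \to \cdots \to D_m = T$. Suppose $D_i$ has first column $C_k$, and consider a Kohnert move on row $r$ producing $D_{i+1}$. If the rightmost cell of row $r$ in $D_i$ lies in column $c \geq 2$, the move stays within column $c$ and leaves column $1$ untouched. Otherwise the rightmost cell is $(r, 1)$, which by the inductive hypothesis forces $r \leq k$; but then rows $1, \ldots, r-1$ are all occupied in column $1$, so no empty position lies above $(r, 1)$ and the move is invalid. Either way, column $1$ of $D_{i+1}$ remains $C_k$.

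With this invariant in hand, the map $\varphi \colon \KD(D) \to \KD(D - C_k)$ given by $\varphi(T) = T \setminus C_k$ is a bijection. Injectivity is immediate since both $T$ and $\varphi(T) \cup C_k$ have first column $C_k$. Well-definedness and surjectivity follow from the observation that any Kohnert move on $D_i$ acts within a column $c \geq 2$, where $D_i$ and $D_i \setminus C_k$ agree, while conversely any Kohnert move on $D - C_k$ lifts to a valid move on $D$ that preserves $C_k$. Since $\wt(T) = \wt(T \setminus C_k) + (1, \ldots, 1, 0, \ldots)$ with $k$ initial ones, summing monomials over $\KD(D)$ factors out $x_1 \cdots x_k$ and yields the identity. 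The main subtlety, handled by the invariant in the previous paragraph, is ruling out that a column-$1$ cell might eventually become the rightmost in its row and relocate upward after cells above have moved — the fact that every one of rows $1, \ldots, k$ always retains its column-$1$ cell ensures that no empty landing spot ever opens up in column $1$.
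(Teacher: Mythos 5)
Your proposal is correct and follows essentially the same route as the paper: Kohnert moves act within columns, the top-justified column $C_k$ admits no moves and hence is frozen in every element of $\KD(D)$, and deleting it gives a weight-shifting bijection (the paper phrases this as a poset isomorphism) with $\KD(D-C_k)$, so $x_1\cdots x_k$ factors out. Your additional details — the induction establishing the frozen-column invariant and the verification that $D-C_k$ is northwest — are just explicit versions of what the paper leaves implicit.
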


\begin{proof}
    As Kohnert moves preserve the columns of cells and no moves are possible in column $1$ of $D$, all diagrams in $\KD(D)$ must coincide in column $1$ with cells in rows $1,\ldots, k$. In particular, since column $1$ is the leftmost column and admits no Kohnert moves, there is a poset isomorphism between $\KD(D)$ and $\KD(D-C_k)$ obtained by deleting all cells in column $1$. Since those cells contribute $x_1 x_2 \cdots x_k$ to the weight, the result follows.
\end{proof}

We next consider $D$ and $s_i D$ when row $i$ is a subset of row $i+1$, meaning the set of occupied columns in row $i$ is a subset of the occupied columns of row $i+1$. These conditions combined imply  all cells in row $i+1$ without cells above them in row $i$ lie to the right of all cells in row $i$. Otherwise, the cell in row $i+1$ without a cell above it in row $i$ and any cell to the right in row $i$ violate the northwest condition. This also implies $s_iD$ is northwest. The only violation must take place in rows $i$ and $i+1$, but we only move cells in row $i+1$ to row $i$ where all cells to the left in row $i+1$ have a cell in row $i$ above them. 

\begin{lemma}\label{lem:subset}
If $D$ is a northwest diagram and row $i$ is contained in row $i+1$, then $s_i D$ is also northwest and $s_iD \in \KD(D)$. In particular, $\KD(s_i D) \subseteq \KD(D)$. 
\end{lemma}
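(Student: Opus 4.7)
My plan is to treat the two assertions of the lemma separately and deduce the containment of Kohnert sets as an immediate corollary.

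To show $s_i D$ is northwest, I would argue by contradiction. Suppose cells $(j,k),(i',l)\in s_iD$ with $i'<j$, $k<l$, and $(i',k)\notin s_iD$. Since the swap $D\mapsto s_iD$ only affects rows $i$ and $i+1$, if both $i'$ and $j$ lie outside $\{i,i+1\}$ the violation transfers verbatim to $D$, contradicting its northwest property. In the remaining cases I would translate each cell back to $D$ via the swap $(i,c)\leftrightarrow(i+1,c)$ and apply the northwest property of $D$ together with the hypothesis that row $i$ is contained in row $i+1$. The critical case is $(j,k)=(i+1,k)$ and $(i',l)=(i,l)$: here $(i,k)\in D$, so writing $C_r$ for the set of columns occupied in row $r$ of $D$ we have $k\in C_i\subseteq C_{i+1}$, giving $(i+1,k)\in D$ and hence $(i,k)\in s_iD$, contradicting our assumption. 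The other cases (when only one of $i',j$ lies in $\{i,i+1\}$) reduce directly to northwest violations in $D$ after the translation.

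To show $s_iD\in\KD(D)$, I would exhibit an explicit Kohnert sequence. By the paragraph preceding the lemma, every element of $C_{i+1}\setminus C_i$ exceeds $\max C_i$; list these columns as $c_1<c_2<\cdots<c_m$. I would perform $m$ Kohnert moves on row $i+1$ in succession. After $j-1$ steps, row $i+1$ equals $C_{i+1}\setminus\{c_{m-j+2},\ldots,c_m\}$, whose maximum is $c_{m-j+1}$ (since the remaining elements of $C_{i+1}\setminus C_i$ below $c_{m-j+1}$ and all of $C_i$ are smaller), so the rightmost cell of row $i+1$ is $(i+1,c_{m-j+1})$. Moving this cell up in its column lands it at $(i,c_{m-j+1})$, because $(i,c_{m-j+1})$ is empty in $D$ and no previous move has touched it. After all $m$ moves, row $i$ is $C_{i+1}$ and row $i+1$ is $C_i$, with every other row preserved; this diagram is exactly $s_iD$.

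The containment $\KD(s_iD)\subseteq\KD(D)$ then follows from the fact that $\KD(D)$ is closed under Kohnert moves: any diagram obtainable from $s_iD$ by a sequence of Kohnert moves is obtainable from $D$ by prepending the sequence constructed above. The main obstacle is the case analysis for the northwest property; it is routine but requires careful bookkeeping of how cells of $s_iD$ correspond to cells of $D$, with the hypothesis $C_i\subseteq C_{i+1}$ being the essential ingredient beyond the northwest property of $D$.
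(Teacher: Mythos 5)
Your proposal is correct and follows essentially the same route as the paper: the paper likewise realizes $s_iD$ by applying Kohnert moves to the cells of row $i+1$ lying strictly to the right of the rightmost cell of row $i$ (which, by northwest-ness plus containment, are exactly the columns of $C_{i+1}\setminus C_i$), and deduces $\KD(s_iD)\subseteq\KD(D)$ from closure of $\KD(D)$ under Kohnert moves. The only cosmetic difference is that the paper disposes of the northwest-ness of $s_iD$ in the paragraph preceding the lemma, whereas you spell out the case analysis explicitly.
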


\begin{proof}
Let $c$ be the rightmost occupied column in row $i$ of $D$. Let $m$ be the number of cells in row  $i+1$ strictly to the right of column $c$. By choice of $c$, we may apply a sequence of $m$ Kohnert moves to row $i+1$ of $D$, resulting in all cells strictly to right of column $c$ moving from row $i+1$ to row $i$. Since row $i$ is a subset of row $i+1$, any column weakly left of column $c$ either has no cells in rows $i,i+1$ or cells in both rows $i,i+1$. Thus this sequence of Kohnert moves yields $s_i D$.
\end{proof}

Since $\pi_i$ acts on a polynomial by adding additional monomials, this makes Theorem~\ref{thm:recur}(M3) plausible. However, in order to understand precisely how $\pi_i$ acts on Kohnert polynomials, we must rely on Demazure crystals.

%%%%%%%%%%%%%%%%%%%%%%%%%%%%%%%%%%%%%%%%%%%%%%%%%%%%%%%%%%%%%%%%
%
\section{Crystal graphs}
%
%%%%%%%%%%%%%%%%%%%%%%%%%%%%%%%%%%%%%%%%%%%%%%%%%%%%%%%%%%%%%%%%
\label{sec:crystals}

In order to understand the action of the degree-preserving divided difference operator on Kohnert polynomials, we shift our paradigm to Demazure operators on Demazure subsets of highest weight crystals.

%%%%%%%%%%%%%%%%%%%%%%%%%%%%%%%%%%%%%%%%%%%%%%%%%%%%%%%%%%%%%%%%
\subsection{Tableaux crystals}
%%%%%%%%%%%%%%%%%%%%%%%%%%%%%%%%%%%%%%%%%%%%%%%%%%%%%%%%%%%%%%%%

A \newword{crystal graph} \cite{Kas90} is a combinatorial model for a highest weight module, consisting of a vertex set $\B$, corresponding to the crystal base (see also the canonical basis \cite{Lus90}), and directed, colored edges, corresponding to deformations of the Chevalley generators. For a highest weight $\lambda$ for $\GL_n$, the crystal base for $\Schur_{\lambda}$ is naturally indexed by $\SSYT_n(\lambda)$, the \newword{semistandard Young tableaux} of shape $\lambda$ with entries in $\{1,2,\ldots,n\}$. We adopt the French (coordinate) convention for $\SSYT$ in which entries weakly increase left to right within rows and strictly increase bottom to top within columns. Kashiwara and Nakashima \cite{KN94} and Littelmann \cite{Lit95} defined the crystal edges on tableaux as follows.

\begin{definition}\label{def:pair-SSYT}
  For $T \in \SSYT_n(\lambda)$ and $1 \leq i <n$, we \newword{i-pair} the cells of $T$ with entries $i$ or $i+1$ iteratively by $i$-pairing an unpaired $i+1$ with an unpaired $i$ weakly to its right whenever all entries $i$ or $i+1$ lying between them are already $i$-paired.
\end{definition}

The raising and lowering operators on crystals are maps $e_i,f_i:\B \rightarrow \B\cup\{0\}$.

\begin{definition}\label{def:raise-SSYT}
  For $1 \leq i <n$, the \newword{crystal raising operator} $e_i$ acts on $T \in \SSYT_n(\lambda)$ as follows:
  \begin{itemize}
  \item if all entries $i+1$ of $T$ are $i$-paired, then $e_i(T)=0$;
  \item otherwise, $e_i$ changes the leftmost unpaired $i+1$ to $i$.
  \end{itemize}
\end{definition}

\begin{definition}\label{def:lower-SSYT}
  For $1 \leq i <n$, the \newword{crystal lowering operator} $f_i$ acts on $T \in \SSYT_n(\lambda)$ as follows:
  \begin{itemize}
  \item if all entries $i$ of $T$ are $i$-paired, then $f_i(T)=0$;
  \item otherwise, $f_i$ changes the rightmost unpaired $i$ to $i+1$.
  \end{itemize}
\end{definition}

We draw a crystal graph with an $i$-colored edge from $b$ to $b'$ whenever $b' = f_i(b)$, omitting edges to $0$. For examples, see Fig.~\ref{fig:TableauxCrystals}. 

\begin{figure}[ht]
 \begin{center}
    \begin{tikzpicture}[xscale=1.75,yscale=1.5]
    %% B(2110)
    \node at (2,7)     (A35) {$\tableau{3 \\ 2 \\ 1 & 1}$};
    \node at (1,6)     (A14) {$\tableau{3 \\ 2 \\ 1 & 2}$};
    \node at (3,6)     (A54) {$\tableau{4 \\ 2 \\ 1 & 1}$};
    \node at (1,5)     (A13) {$\tableau{3 \\ 2 \\ 1 & 3}$};
    \node at (2,5)     (A33) {$\tableau{4 \\ 2 \\ 1 & 2}$};
    \node at (3,5)     (A53) {$\tableau{4 \\ 3 \\ 1 & 1}$};
    \node at (1.5,4)   (A22) {$\tableau{3 \\ 2 \\ 1 & 4}$};
    \node at (2,4)     (A32) {$\tableau{4 \\ 2 \\ 1 & 3}$};
    \node at (2.5,4)   (A42) {$\tableau{4 \\ 3 \\ 1 & 2}$};
    \node at (1,3)     (A11) {$\tableau{4 \\ 3 \\ 2 & 2}$};
    \node at (2,3)     (A31) {$\tableau{4 \\ 3 \\ 1 & 3}$};
    \node at (3,3)     (A51) {$\tableau{4 \\ 2 \\ 1 & 4}$};
    \node at (1,2)     (A10) {$\tableau{4 \\ 3 \\ 2 & 3}$};
    \node at (3,2)     (A50) {$\tableau{4 \\ 3 \\ 1 & 4}$};
    \node at (2,1)     (Axx) {$\tableau{4 \\ 3 \\ 2 & 4}$};
    \draw[thick,blue  ,->](A35) -- (A14)   node[midway,above]{$1$};
    \draw[thick,blue  ,->](A54) -- (A33)   node[midway,above]{$1$};
    \draw[thick,blue  ,->](A53) -- (A42)   node[midway,above]{$1$};
    \draw[thick,blue  ,->](A42) -- (A11)   node[midway,above]{$1$};
    \draw[thick,blue  ,->](A31) -- (A10)   node[midway,above]{$1$};
    \draw[thick,blue  ,->](A50) -- (Axx)   node[midway,above]{$1$};
    \draw[thick,purple,->](A14) -- (A13)   node[midway,left ]{$2$};
    \draw[thick,purple,->](A54) -- (A53)   node[midway,left ]{$2$};
    \draw[thick,purple,->](A33) -- (A32)   node[midway,left ]{$2$};
    \draw[thick,purple,->](A32) -- (A31)   node[midway,left ]{$2$};
    \draw[thick,purple,->](A11) -- (A10)   node[midway,left ]{$2$};
    \draw[thick,purple,->](A51) -- (A50)   node[midway,left ]{$2$};
    \draw[thick,violet,->](A35) -- (A54)   node[midway,above]{$3$};
    \draw[thick,violet,->](A14) -- (A33)   node[midway,above]{$3$};
    \draw[thick,violet,->](A13) -- (A22)   node[midway,above]{$3$};
    \draw[thick,violet,->](A22) -- (A51)   node[midway,above]{$3$};
    \draw[thick,violet,->](A31) -- (A50)   node[midway,above]{$3$};
    \draw[thick,violet,->](A10) -- (Axx)   node[midway,above]{$3$};
    %% B(2200)
    \node at (4,4)  (e1) {$\tableau{3 & 3 \\ 2 & 2}$};
    \node at (4.5,6)  (c2) {$\tableau{2 & 3 \\ 1 & 2}$};
    \node at (4.5,5)  (d2) {$\tableau{3 & 3 \\ 1 & 2}$};
    \node at (4.5,3)  (f2) {$\tableau{3 & 4 \\ 2 & 2}$};
    \node at (4.5,2)  (g2) {$\tableau{3 & 4 \\ 2 & 3}$};
    \node at (5.5,8)  (a3) {$\tableau{2 & 2 \\ 1 & 1}$};
    \node at (5.5,7)  (b3) {$\tableau{2 & 3 \\ 1 & 1}$};
    \node at (5.5,6)  (c3) {$\tableau{3 & 3 \\ 1 & 1}$};
    \node at (5.5,5)  (d3) {$\tableau{2 & 4 \\ 1 & 2}$};
    \node at (5.25,4)(l3) {$\tableau{2 & 4 \\ 1 & 3}$};
    \node at (5.75,4)(r3) {$\tableau{3 & 4 \\ 1 & 2}$};
    \node at (5.5,3)  (f3) {$\tableau{3 & 4 \\ 1 & 3}$};
    \node at (5.5,2)  (g3) {$\tableau{4 & 4 \\ 2 & 2}$};
    \node at (5.5,1)  (h3) {$\tableau{4 & 4 \\ 2 & 3}$};
    \node at (5.5,0)  (i3) {$\tableau{4 & 4 \\ 3 & 3}$};
    \node at (6.5,6)  (c4) {$\tableau{2 & 4 \\ 1 & 1}$};
    \node at (6.5,5)  (d4) {$\tableau{3 & 4 \\ 1 & 1}$};
    \node at (6.5,3)  (f4) {$\tableau{4 & 4 \\ 1 & 2}$};
    \node at (6.5,2)  (g4) {$\tableau{4 & 4 \\ 1 & 3}$};
    \node at (7,4)  (e5) {$\tableau{4 & 4 \\ 1 & 1}$};
    \draw[thick,blue  ,->] (b3) -- (c2)  node[midway,above]{$1$};
    \draw[thick,blue  ,->] (c3) -- (d2)  node[midway,above]{$1$};
    \draw[thick,blue  ,->] (c4) -- (d3)  node[midway,above]{$1$};
    \draw[thick,blue  ,->] (d2) -- (e1)  node[midway,above]{$1$};
    \draw[thick,blue  ,->] (d4) -- (r3)  node[midway,above]{$1$};
    \draw[thick,blue  ,->] (r3) -- (f2)  node[midway,above]{$1$};
    \draw[thick,blue  ,->] (e5) -- (f4)  node[midway,above]{$1$};
    \draw[thick,blue  ,->] (f3) -- (g2)  node[midway,above]{$1$};
    \draw[thick,blue  ,->] (f4) -- (g3)  node[midway,above]{$1$};
    \draw[thick,blue  ,->] (g4) -- (h3)  node[midway,above]{$1$};
    \draw[thick,purple,->] (a3) -- (b3)   node[midway,left ]{$2$};
    \draw[thick,purple,->] (b3) -- (c3)   node[midway,left ]{$2$};
    \draw[thick,purple,->] (c2) -- (d2)   node[midway,left ]{$2$};
    \draw[thick,purple,->] (c4) -- (d4)   node[midway,left ]{$2$};
    \draw[thick,purple,->] (d3) -- (l3)   node[midway,left ]{$2$};
    \draw[thick,purple,->] (l3) -- (f3)   node[midway,left ]{$2$};
    \draw[thick,purple,->] (f2) -- (g2)   node[midway,left ]{$2$};
    \draw[thick,purple,->] (f4) -- (g4)   node[midway,left ]{$2$};
    \draw[thick,purple,->] (g3) -- (h3)   node[midway,left ]{$2$};
    \draw[thick,purple,->] (h3) -- (i3)   node[midway,left ]{$2$};
    \draw[thick,violet,->] (b3) -- (c4)   node[midway,above]{$3$};
    \draw[thick,violet,->] (c2) -- (d3)   node[midway,below]{$3$};
    \draw[thick,violet,->] (c3) -- (d4)   node[midway,below]{$3$};
    \draw[thick,violet,->] (d2) -- (r3)   node[midway,above]{$3$};
    \draw[thick,violet,->] (d4) -- (e5)   node[midway,above]{$3$};
    \draw[thick,violet,->] (e1) -- (f2)   node[midway,above]{$3$};
    \draw[thick,violet,->] (r3) -- (f4)   node[midway,above]{$3$};
    \draw[thick,violet,->] (f2) -- (g3)   node[midway,below]{$3$};
    \draw[thick,violet,->] (f3) -- (g4)   node[midway,below]{$3$};
    \draw[thick,violet,->] (g2) -- (h3)   node[midway,above]{$3$};
    \end{tikzpicture}
    \caption{\label{fig:TableauxCrystals}The tableaux crystals for $\GL_4$ with highest weights $(2,1,1,0)$ and $(2,2,0,0)$.}
  \end{center}
\end{figure}
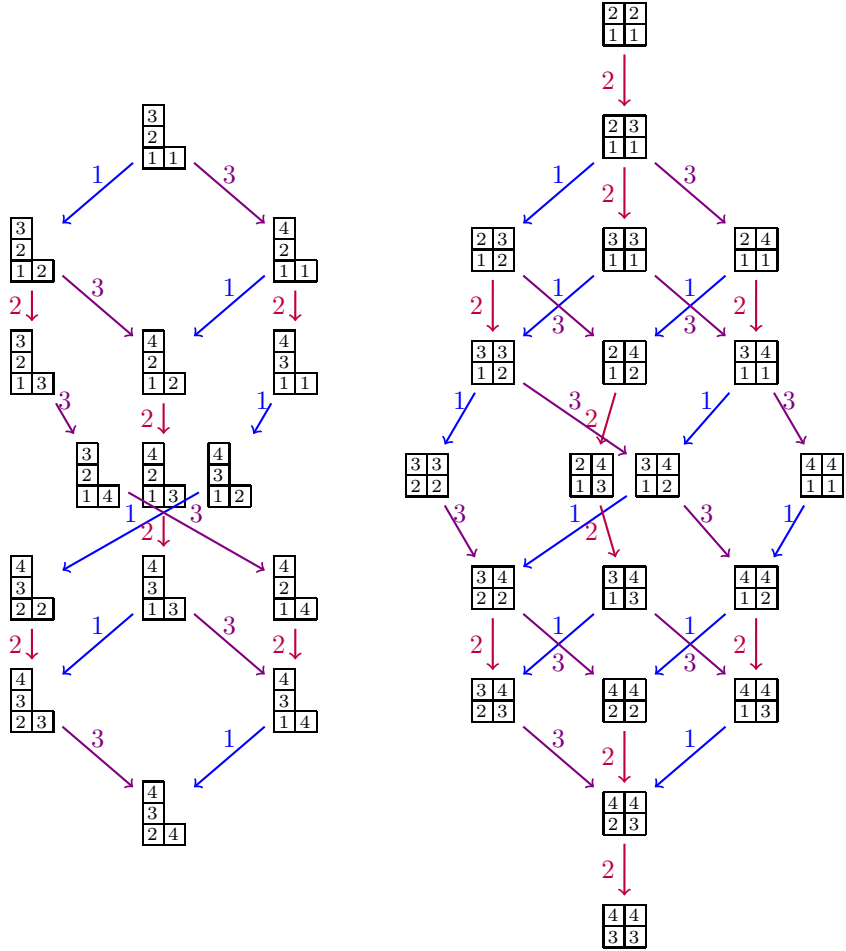

The crystal base is endowed with a map, $\wt:\B\rightarrow\mathbb{Z}^n$, to the weight lattice. Using this, we define the \newword{character} of a crystal $\B$ by
\begin{equation}\label{e:char-crystal}
    \ch(\B) = \sum_{b\in\B} x_1^{\wt(b)_1} \cdots x_n^{\wt(b)_n} .
\end{equation}
Thus if $\B$ is the crystal for a module $V$, then we have $\ch(\B) = \ch(V)$. In particular, for $\B_{\lambda}$ the crystal on $\SSYT_n(\lambda)$, we have
\begin{equation}\label{e:char-schur}
    \ch(\B_{\lambda}) = \ch(\Schur_{\lambda}) = s_{\lambda}(x_1,\ldots,x_n),
\end{equation}
where the latter is the \newword{Schur polynomial} indexed by $\lambda$ in $n$ variables.

\begin{definition}\label{def:hwt}
  An element $u\in\B$ of a highest weight crystal $\B$ is a \newword{highest weight element} if $e_i(u)=0$ for all $1 \leq i < n$.
\end{definition}

Each connected crystal $\B_{\lambda}$ has a unique highest weight element $b$, and we have $\wt(b)=\lambda$. Part of the beauty of highest weight elements, and indeed of crystals, lies in the following decomposition combining \eqref{e:char-crystal} and \eqref{e:char-schur},
\begin{equation}
    \ch(\B) = \sum_{\substack{u \in \B \\ e_i(u)=0 \forall i }} s_{\wt(u)}(x_1,\ldots,x_n).
\end{equation}

%%%%%%%%%%%%%%%%%%%%%%%%%%%%%%%%%%%%%%%%%%%%%%%%%%%%%%%%%%%%%%%%
\subsection{Demazure crystals}
%%%%%%%%%%%%%%%%%%%%%%%%%%%%%%%%%%%%%%%%%%%%%%%%%%%%%%%%%%%%%%%%

Littelmann \cite{Lit95} conjectured a crystal structure for Demazure modules as certain truncations of highest weight crystals. Kashiwara \cite{Kas93} proved the result, giving a new proof of the Demazure character formula.

\begin{definition}\label{def:demazure-op}
Given a subset $X$ of a highest weight crystal $\B$ and an index $1\leq i < n$, the \newword{Demazure operator} $\Dem_i$ is given by
\begin{equation}
  \Dem_i (X) = \{ b \in \B \mid e_i^k(b) \in X \text{ for some } k \geq 0 \}.
  \label{e:D}
\end{equation}
\end{definition}

These operators satisfy the braid relations, and so we may define
\begin{equation}
  \Dem_w = \Dem_{i_m} \cdots \Dem_{i_1}
  \label{e:Dw}
\end{equation}
for any expression $s_{i_m} \cdots s_{i_1}$ for the permutation $w$. As with $\pi_w$, the indexing expression for $\Dem_w$ need not be reduced, since $\Dem_i^2 = \Dem_i$.

\begin{definition}[\cite{Lit95}]\label{def:demazure-crystal}
  Given a highest weight crystal $\B(\lambda)$ and a permutation $w$, the \newword{Demazure crystal} $\B^w_{\lambda}$ is given by
  \begin{equation}
    \B^w_{\lambda} = \Dem_{w} (\{ u_{\lambda} \}) ,
    \label{e:BwL}
  \end{equation}
  where $u_{\lambda}$ is the highest weight element of $\B_{\lambda}$.
\end{definition}

\begin{figure}[ht]
 \begin{center}
    \begin{tikzpicture}[xscale=1.25,yscale=1.5]
    %% B(2110)
    \node at (-8,7.5) {$\B^{\mathrm{id}}_{(2,1,1,0)}$};
    \node at (-8,7)     (E35) {$\tableau{3 \\ 2 \\ 1 & 1}$};
    %% B(1210)
    \node at (-6.25,7.5) {$\B^{s_1}_{(2,1,1,0)}$};
    \node at (-6.25,7)     (D35) {$\tableau{3 \\ 2 \\ 1 & 1}$};
    \node at (-7,6)     (D14) {$\tableau{3 \\ 2 \\ 1 & 2}$};
    \draw[thick,blue  ,->](D35) -- (D14)   node[midway,above]{$1$};
    %% B(1201)
    \node at (-4.5,7.5) {$\B^{s_3 s_1}_{(2,1,1,0)}$};
    \node at (-4.5,7)     (C35) {$\tableau{3 \\ 2 \\ 1 & 1}$};
    \node at (-5.25,6)     (C14) {$\tableau{3 \\ 2 \\ 1 & 2}$};
    \node at (-3.75,6)     (C54) {$\tableau{4 \\ 2 \\ 1 & 1}$};
    \node at (-4.5,5)     (C33) {$\tableau{4 \\ 2 \\ 1 & 2}$};
    \draw[thick,blue  ,->](C35) -- (C14)   node[midway,above]{$1$};
    \draw[thick,blue  ,->](C54) -- (C33)   node[midway,above]{$1$};
    \draw[thick,violet,->](C35) -- (C54)   node[midway,above]{$3$};
    \draw[thick,violet,->](C14) -- (C33)   node[midway,above]{$3$};
    %% B(1021)
    \node at (-2,7.5) {$\B^{s_2 s_3 s_1}_{(2,1,1,0)}$};
    \node at (-2,7)     (B35) {$\tableau{3 \\ 2 \\ 1 & 1}$};
    \node at (-2.75,6)     (B14) {$\tableau{3 \\ 2 \\ 1 & 2}$};
    \node at (-1.25,6)     (B54) {$\tableau{4 \\ 2 \\ 1 & 1}$};
    \node at (-2.75,5)     (B13) {$\tableau{3 \\ 2 \\ 1 & 3}$};
    \node at (-2,5)     (B33) {$\tableau{4 \\ 2 \\ 1 & 2}$};
    \node at (-1.25,5)     (B53) {$\tableau{4 \\ 3 \\ 1 & 1}$};
    \node at (-2,4)     (B32) {$\tableau{4 \\ 2 \\ 1 & 3}$};
    \node at (-2,3)     (B31) {$\tableau{4 \\ 3 \\ 1 & 3}$};
    \draw[thick,blue  ,->](B35) -- (B14)   node[midway,above]{$1$};
    \draw[thick,blue  ,->](B54) -- (B33)   node[midway,above]{$1$};
    \draw[thick,purple,->](B14) -- (B13)   node[midway,left ]{$2$};
    \draw[thick,purple,->](B54) -- (B53)   node[midway,left ]{$2$};
    \draw[thick,purple,->](B33) -- (B32)   node[midway,left ]{$2$};
    \draw[thick,purple,->](B32) -- (B31)   node[midway,left ]{$2$};
    \draw[thick,violet,->](B35) -- (B54)   node[midway,above]{$3$};
    \draw[thick,violet,->](B14) -- (B33)   node[midway,above]{$3$};
    %% B(0121)
    \node at (0.5,7.5) {$\B^{s_1 s_2 s_3 s_1}_{(2,1,1,0)}$};
    \node at (0.5,7)     (A35) {$\tableau{3 \\ 2 \\ 1 & 1}$};
    \node at (-0.25,6)     (A14) {$\tableau{3 \\ 2 \\ 1 & 2}$};
    \node at (1.25,6)     (A54) {$\tableau{4 \\ 2 \\ 1 & 1}$};
    \node at (-0.25,5)     (A13) {$\tableau{3 \\ 2 \\ 1 & 3}$};
    \node at (0.5,5)     (A33) {$\tableau{4 \\ 2 \\ 1 & 2}$};
    \node at (1.25,5)     (A53) {$\tableau{4 \\ 3 \\ 1 & 1}$};
    \node at (0.5,4)     (A32) {$\tableau{4 \\ 2 \\ 1 & 3}$};
    \node at (1,4)   (A42) {$\tableau{4 \\ 3 \\ 1 & 2}$};
    \node at (-0.25,3)     (A11) {$\tableau{4 \\ 3 \\ 2 & 2}$};
    \node at (0.5,3)     (A31) {$\tableau{4 \\ 3 \\ 1 & 3}$};
    \node at (-0.25,2)     (A10) {$\tableau{4 \\ 3 \\ 2 & 3}$};
    \draw[thick,blue  ,->](A35) -- (A14)   node[midway,above]{$1$};
    \draw[thick,blue  ,->](A54) -- (A33)   node[midway,above]{$1$};
    \draw[thick,blue  ,->](A53) -- (A42)   node[midway,right]{$1$};
    \draw[thick,blue  ,->](A42) -- (A11)   node[midway,above]{$1$};
    \draw[thick,blue  ,->](A31) -- (A10)   node[midway,above]{$1$};
    \draw[thick,purple,->](A14) -- (A13)   node[midway,left ]{$2$};
    \draw[thick,purple,->](A54) -- (A53)   node[midway,left ]{$2$};
    \draw[thick,purple,->](A33) -- (A32)   node[midway,left ]{$2$};
    \draw[thick,purple,->](A32) -- (A31)   node[midway,left ]{$2$};
    \draw[thick,purple,->](A11) -- (A10)   node[midway,left ]{$2$};
    \draw[thick,violet,->](A35) -- (A54)   node[midway,above]{$3$};
    \draw[thick,violet,->](A14) -- (A33)   node[midway,above]{$3$};
    \end{tikzpicture}
    \caption{\label{fig:TableauxDemazureCrystals}The Demazure crystals $\B^w_{(2,1,1,0)}$ for the permutations $w= \mathrm{id}, s_1 , s_3 s_1, s_2 s_3 s_1$, and $s_1 s_2 s_3 s_1$.}
  \end{center}
\end{figure}

For example, Fig.~\ref{fig:TableauxDemazureCrystals} constructs the Demazure crystal $\B^{4213}_{(2,1,1,0)}$ using the expression $4213 = s_1 s_2 s_3 s_1$ together with the left hand crystal in Fig.~\ref{fig:TableauxCrystals}. Beginning with the topmost tableau, traverse the single $f_1$ edge, followed by the two $f_3$ edges (taking the induced subgraph gives us an additional $f_1$ edge as well), followed by three vertical $f_2$ paths (the outer two of length $1$ and the middle of length $2$), and finally take two $f_1$ paths (and the one induced $f_2$ edge).

Just as Demazure crystals combinatorialize Demazure characters, the Demazure operator combinatorializes the isobaric divided difference operator.

\begin{proposition}\label{prop:Di}
  Let $X$ be a Demazure subset of a highest weight crystal $\B$.
  Then on the level of characters, we have
  \begin{equation}
      \ch(\Dem_i(X)) = \pi_i (\ch(X)) .
  \end{equation}
\end{proposition}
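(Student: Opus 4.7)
The plan is to prove $\ch(\Dem_i(X)) = \pi_i(\ch(X))$ by decomposing $\B$ into its $i$-strings and checking the identity string-by-string. Write $\B$ as a disjoint union of $i$-strings $S = \{u_0, f_i u_0, \ldots, f_i^{\ell} u_0\}$, where $u_0$ is the unique element of $S$ with $e_i(u_0) = 0$. Since $\pi_i$ is linear and only involves the variables $x_i, x_{i+1}$, both $\pi_i(\ch(X))$ and $\ch(\Dem_i(X))$ split as sums of contributions from individual strings. Setting $\wt(u_0) = (a_1,\ldots,a_n)$ with $\ell = a_i - a_{i+1}$ and letting $M$ denote the monomial in all variables other than $x_i, x_{i+1}$, the full-string character $M\sum_{j=0}^{\ell} x_i^{a_i - j} x_{i+1}^{a_{i+1}+j}$ is symmetric in $x_i, x_{i+1}$ and hence $\pi_i$-fixed; moreover, $\pi_i(M\, x_i^{a_i} x_{i+1}^{a_{i+1}})$ equals this full-string character, since $a_i \geq a_{i+1}$.

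The crux is a structural lemma: for any Demazure subset $X \subseteq \B$ and any $i$-string $S$, the intersection $X \cap S$ is either $\emptyset$, $\{u_0\}$, or all of $S$. I would prove this by induction on the length of $w$ where $X = \B^w_\lambda$. The base $w = \mathrm{id}$ is immediate: $\{u_\lambda\}$ meets each $i$-string in either $\{u_\lambda\}$ or $\emptyset$, since $u_\lambda$ is the unique highest weight element of $\B$. For the inductive step, write $X = \Dem_j(Y)$ with $Y = \B^{w'}_\lambda$ shorter. The case $i = j$ is immediate from the definition of $\Dem_j$, which fills each $j$-string meeting $Y$. For the case $i \neq j$ one uses the crystal interaction axioms (of Stembridge type) for the pair of indices $\{i,j\}$ to check that the newly added $f_j$-descendants do not create intermediate $i$-string truncations, only whole $i$-strings or singletons consisting of $i$-string tops.

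Granting the lemma, the proposition follows case by case on each $i$-string $S$. If $X \cap S = \emptyset$, then $\Dem_i(X) \cap S = \emptyset$ as well, and both sides contribute zero. If $X \cap S = \{u_0\}$, then $\Dem_i(X) \cap S = S$, and by the computation above $\pi_i(M\, x_i^{a_i} x_{i+1}^{a_{i+1}})$ equals the full-string character $\ch(S)$. If $X \cap S = S$, then $\Dem_i(X) \cap S = S$ too, and both $\ch(X \cap S)$ and $\ch(\Dem_i(X) \cap S)$ equal $\ch(S)$, which is $\pi_i$-fixed. Summing over all $i$-strings yields $\ch(\Dem_i(X)) = \pi_i(\ch(X))$.

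The main obstacle is the $i \neq j$ case of the structural lemma. This is a classical property of Demazure crystals (essentially due to Kashiwara and Littelmann), but establishing it from scratch requires a careful case analysis of how the operators $e_i, f_i, e_j, f_j$ interact along strings, and is the only place where genuinely crystal-theoretic input (beyond the formal definition of $\Dem_i$) enters the proof.
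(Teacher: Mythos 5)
Your route is sound and genuinely different from the paper's. The paper first decomposes a Demazure subset into connected Demazure crystals $\B^{w}_{\lambda}$ and then argues by a Bruhat-order dichotomy: if $w \prec s_i w$ it invokes the Demazure character formula in the form \eqref{e:si} together with $\Dem_i(\B^w_{\lambda}) = \B^{s_i w}_{\lambda}$; if $s_i w \prec w$ it uses idempotence $\Dem_i^2 = \Dem_i$ and the $s_i$-symmetry of $\ch(\B^w_{\lambda})$. You instead verify the identity $i$-string by $i$-string, resting everything on the string property of Demazure crystals (each $i$-string meets $X$ in $\emptyset$, its top element, or the whole string). Your string-level computation is correct: the weight changes by $-\alpha_i$ along a string, $\pi_i$ commutes with multiplication by the $x_i,x_{i+1}$-free monomial $M$, $\pi_i(M x_i^{a_i}x_{i+1}^{a_{i+1}})$ is the full-string character when $a_i \geq a_{i+1}$, and full-string characters are $\pi_i$-fixed; together with the observation that $\Dem_i$ sends top-only intersections to full strings and preserves empty/full ones, this gives the proposition. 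What each approach buys: the paper's argument is shorter because it outsources the hard content to the cited character formula \eqref{e:si}; yours isolates the finer structural fact and avoids the Bruhat case analysis, applying uniformly to any union of Demazure crystals.

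The one genuine gap is your proposed in-house proof of the structural lemma in the case $i \neq j$. The claim that the $f_j$-saturation step "does not create intermediate $i$-string truncations" is precisely the inductive heart of Kashiwara's proof of the refined Demazure character formula, and it does not follow from a routine local analysis of Stembridge-type axioms: those axioms constrain how a \emph{full} highest weight crystal looks locally, but the string property is a statement about the truncated subsets $\Dem_{i_m}\cdots\Dem_{i_1}(\{u_\lambda\})$, and Kashiwara's argument for it uses global information (the crystal $B(\infty)$/tensor-product structure), not just rank-two neighborhoods. As written, your sketch would not carry through without substantial additional work. The clean fix is simply to cite the string property (Kashiwara's refined Demazure character formula, also available in Littelmann's path model) as a known theorem, exactly parallel to the way the paper cites \eqref{e:si}; with that citation in place of your sketched induction, your proof is complete.
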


\begin{proof}
  For a subset $X \subseteq \B$ to be a Demazure crystal, we must have a decomposition
  \begin{eqnarray*}
   X & \cong & \B^{w^{(1)}}_{\lambda^{(1)}} \sqcup \cdots \sqcup \B^{w^{(k)}}_{\lambda^{(k)}} \\
   \ch(X) & = & \ch(\B^{w^{(1)}}_{\lambda^{(1)}}) + \cdots + \ch(\B^{w^{(k)}}_{\lambda^{(k)}}) 
   \end{eqnarray*}
  for some partitions $\lambda^{(1)},\ldots,\lambda^{(k)}$ and permutations $w^{(1)},\ldots,w^{(k)}$, where $k$ is the number of connected components of the crystal. The Demazure operator $\Dem_i$ acts on each connected component separately, and so factors through the decomposition. Thus it suffices to show for $\lambda$ a partition and $w$ a permutation, we have
  \[ \ch(\Dem_i(\B^w_{\lambda})) = \pi_i(\ch(\B^w_{\lambda})) . \]
  If $w \prec s_i \cdot w$ in Bruhat order, then by \eqref{e:BwL}, we have $\Dem_i(\B^w_{\lambda}) = \B_{s_i w}(\lambda)$, and so by \eqref{e:si}, we have
  \[ \ch(\Dem_i(\B^w_{\lambda})) = \ch(\B^{s_i w}_{\lambda}) = \pi_i(\ch(\B^w_{\lambda})). \]
  On the other hand, if $s_i \cdot w \prec w$ in Bruhat order, then $w$ has a reduced expression of the form $s_i s_{i_m} \cdots s_{i_1}$. Thus by \eqref{e:BwL}, we have $\B^w_{\lambda} = \Dem_i(\B^{s_i w}_{\lambda})$, and so  
  \[ \Dem_i(\B^w_{\lambda}) = \Dem_i(\Dem_i(\B^{s_i w}_{\lambda})) = \Dem_i(\B^{s_i w}_{\lambda}) = \B^w_{\lambda}. \]
  In this case, by symmetry of crystal strings, we also have $s_i \cdot \ch(\B^w_{\lambda}) = \ch(\B^w_{\lambda})$, and so $\pi_i(\ch(\B^w_{\lambda})) = \ch(\B^w_{\lambda})$. Thus in this case, we have
  \[ \ch(\Dem_i(\B^w_{\lambda})) = \ch(\B^w_{\lambda}) = \pi_i(\ch(\B^{w}_{\lambda})). \]
  The result follows.
\end{proof}

%%%%%%%%%%%%%%%%%%%%%%%%%%%%%%%%%%%%%%%%%%%%%%%%%%%%%%%%%%%%%%%%
\subsection{Kohnert crystals}
%%%%%%%%%%%%%%%%%%%%%%%%%%%%%%%%%%%%%%%%%%%%%%%%%%%%%%%%%%%%%%%%

Assaf \cite{Ass-KC} defined a crystal structure on diagrams that intertwines the crystal operators on tableaux via the injective, weight-reversing map from $\KD(\comp{a})$ to $\SSYT_n(\mathrm{sort}(\comp{a}))$ defined by Assaf and Searles \cite[Def~4.5]{AS18}. 

\begin{definition}\label{def:pair-diagram}\cite{Ass-KC}
  For $T$ a diagram and $1 \leq i <n$, we \newword{i-pair} the cells of $T$ in rows $i,i+1$ iteratively by $i$-pairing an unpaired cell in row $i+1$ with an unpaired cell in row $i$ weakly to its left whenever all cell in rows $i$ or $i+1$ lying between them are already $i$-paired.
\end{definition}

\begin{definition}\label{def:raise-diagram}\cite{Ass-KC}
  For $1 \leq i <n$, the \newword{Kohnert raising operator} $e_i$ acts on a diagram $T$ as follows:
  \begin{itemize}
  \item if all cells in row $i+1$ of $T$ are $i$-paired, then $e_i(T)=0$;
  \item otherwise, $e_i$ moves the rightmost unpaired cell in row $i+1$ to row $i$.
  \end{itemize}
\end{definition}

Given the asymmetry between raising and lowering operators for Demazure crystals, we define Kohnert crystals using the former. For example, Fig.~\ref{fig:KohnertCrystalNW} shows the Kohnert crystal structure on the set of Kohnert diagrams generated in Fig.~\ref{fig:KohnertPosetNW}.

\begin{definition}\label{def:lower-diagram}\cite{Ass-KC}
  For $1 \leq i <n$, the \newword{Kohnert lowering operator} $f_i$ acts on a diagram $T$ as follows:
  \begin{itemize}
  \item if all cells in row $i$ of $T$ are $i$-paired, then $f_i(T)=0$;
  \item otherwise, $e_i$ moves the leftmost unpaired cell in row $i$ to row $i+1$.
  \end{itemize}
\end{definition}

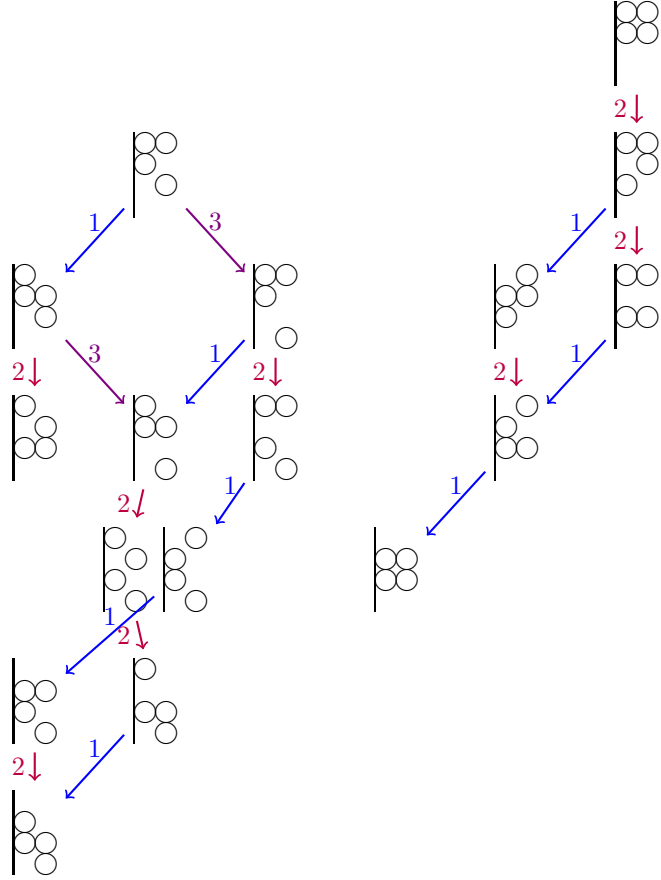
\begin{figure}[ht]
 \begin{center}
    \begin{tikzpicture}[xscale=1.6,yscale=1.75]
    %% B(0121)
    \node at (2,5)     (A35) {$\cirtab{~ & ~ \\ ~ & \\ & ~ \\ & \\}$};
    \node at (1,4)     (A14) {$\cirtab{~ & \\ ~ & ~ \\ & ~ \\ & \\}$};
    \node at (3,4)     (A54) {$\cirtab{~ & ~ \\ ~ & \\ & \\ & ~ \\}$};
    \node at (1,3)     (A13) {$\cirtab{~ & \\ & ~ \\ ~ & ~ \\ & \\}$};
    \node at (2,3)     (A33) {$\cirtab{~ & \\ ~ & ~ \\ & \\ & ~ \\}$};
    \node at (3,3)     (A53) {$\cirtab{~ & ~ \\ & \\ ~ & \\ & ~ \\}$};
    \node at (1.75,2)     (A32) {$\cirtab{~ & \\ & ~ \\ ~ & \\ & ~ \\}$};
    \node at (2.25,2)     (A42) {$\cirtab{ & ~ \\ ~ & \\ ~ & \\ & ~ \\}$};
    \node at (1,1)     (A11) {$\cirtab{ & \\ ~ & ~ \\ ~ & \\ & ~ \\}$};
    \node at (2,1)     (A31) {$\cirtab{ ~ & \\ & \\ ~ & ~ \\ & ~ \\}$};
    \node at (1,0)     (A10) {$\cirtab{ & \\ ~ & \\ ~ & ~ \\ & ~ \\}$};
    \draw[thick,blue  ,->](A35) -- (A14)   node[midway,above]{$1$};
    \draw[thick,blue  ,->](A54) -- (A33)   node[midway,above]{$1$};
    \draw[thick,blue  ,->](A53) -- (A42)   node[midway,above]{$1$};
    \draw[thick,blue  ,->](A42) -- (A11)   node[midway,above]{$1$};
    \draw[thick,blue  ,->](A31) -- (A10)   node[midway,above]{$1$};
    \draw[thick,purple,->](A14) -- (A13)   node[midway,left ]{$2$};
    \draw[thick,purple,->](A54) -- (A53)   node[midway,left ]{$2$};
    \draw[thick,purple,->](A33) -- (A32)   node[midway,left ]{$2$};
    \draw[thick,purple,->](A32) -- (A31)   node[midway,left ]{$2$};
    \draw[thick,purple,->](A11) -- (A10)   node[midway,left ]{$2$};
    \draw[thick,violet,->](A35) -- (A54)   node[midway,above]{$3$};
    \draw[thick,violet,->](A14) -- (A33)   node[midway,above]{$3$};
    %% B(022)
    \node at (6,6)     (B75) {$\cirtab{ ~ & ~ \\ ~ & ~ \\ & \\ & \\}$};
    \node at (6,5)     (B74) {$\cirtab{ ~ & ~ \\ & ~ \\ ~ & \\ & \\}$};
    \node at (6,4)     (B73) {$\cirtab{ ~ & ~ \\ & \\ ~ & ~ \\ & \\}$};
    \node at (5,4)     (B63) {$\cirtab{ & ~ \\ ~ & ~ \\ ~ & \\ & \\}$};
    \node at (5,3)     (B62) {$\cirtab{ & ~ \\ ~ & \\ ~ & ~ \\ & \\}$};
    \node at (4,2)     (B51) {$\cirtab{ & \\ ~ & ~ \\ ~ & ~ \\ & \\}$};
    \draw[thick,blue  ,->](B74) -- (B63)   node[midway,above]{$1$};
    \draw[thick,blue  ,->](B73) -- (B62)   node[midway,above]{$1$};
    \draw[thick,blue  ,->](B62) -- (B51)   node[midway,above]{$1$};
    \draw[thick,purple,->](B75) -- (B74)   node[midway,left ]{$2$};
    \draw[thick,purple,->](B74) -- (B73)   node[midway,left ]{$2$};
    \draw[thick,purple,->](B63) -- (B62)   node[midway,left ]{$2$};
    \end{tikzpicture}
    \caption{\label{fig:KohnertCrystalNW}The Kohnert crystal on Kohnert diagrams for the bottom diagram, which is northwest.}
  \end{center}
\end{figure}

Assaf \cite[Thm~4.1.1]{Ass-KC} proves the Kohnert raising operators are closed within the set of Kohnert diagrams, provided the initial diagram is northwest. 

\begin{theorem}\cite{Ass-KC}\label{thm:KC-raise}
  For $D$ a northwest diagram, if $T\in\KD(D)$ and $e_i(T)\neq 0$, then $e_i(T)\in\KD(D)$.
\end{theorem}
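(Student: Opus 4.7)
The plan is to exhibit an explicit sequence of Kohnert moves from $D$ terminating at $e_i(T)$. Let $c_*$ denote the column of the rightmost $i$-unpaired cell in row $i+1$ of $T$. Observe first that the position $(i, c_*)$ must be unoccupied in $T$: otherwise the cells at $(i, c_*)$ and $(i+1, c_*)$ would be $i$-paired (they lie in the same column with nothing strictly between them), contradicting unpairedness. Hence $e_i(T)$ is obtained from $T$ by relocating just the cell at $(i+1, c_*)$ to $(i, c_*)$, and $e_i(T)$ is a well-defined diagram.

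Split into two cases according to whether $(i+1, c_*)$ is the rightmost cell of row $i+1$ in $T$. In the easy case, where it is rightmost, a single Kohnert move on row $i+1$ of $T$ lifts this cell up; since $(i, c_*)$ is empty, the landing position is exactly $(i, c_*)$, yielding $e_i(T) \in \KD(T) \subseteq \KD(D)$.

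The main case is where there exist cells in row $i+1$ of $T$ at columns $c_* < c_1 < \cdots < c_m$, each $i$-paired with a cell in row $i$ at some column strictly greater than $c_*$. Here the plan is to take a witnessing Kohnert sequence $D = T_0 \to T_1 \to \cdots \to T_k = T$ and modify it. Identify the first step $j_0$ at which the cell destined to end up at $(i+1, c_*)$ in $T$ first enters row $i+1$ (or, if that cell lies in row $i+1$ already in $D$, take $j_0 = 0$). Using the unpairedness of this cell in $T$ together with the northwest hypothesis on $D$, argue that $(i, c_*)$ is unoccupied in $T_{j_0-1}$ as well, so that this Kohnert move may be replaced by one which lifts the same cell one row further, landing it at $(i, c_*)$. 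Replay the remaining original moves, verifying that each remains a valid Kohnert move and that none disturbs $(i, c_*)$; the resulting diagram agrees with $e_i(T)$.

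The main technical obstacle will be verifying the validity of the modified sequence. After the promotion, the rightmost cell of every row acted on by a subsequent move, together with the row in which that move terminates, must remain unchanged. The northwest hypothesis on $D$ is essential here, as it restricts the configurations that can occur in intermediate diagrams: the cells at $(i+1, c_1), \ldots, (i+1, c_m)$ and their pairing partners in row $i$ form a balanced block to the right of column $c_*$ that is preserved under the original moves, and promoting the target cell to $(i, c_*)$ does not intrude on this block. Without the northwest hypothesis, the promotion can cascade into collisions or alter the rightmost-cell identity in later rows, and the argument breaks.
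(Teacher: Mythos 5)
This statement is not proved in the paper at all: it is quoted from \cite{Ass-KC} (Theorem~4.1.1 there), so there is no internal proof to compare against; your attempt has to stand on its own, and as written it does not. Your preliminary observation that $(i,c_*)$ is vacant in $T$ is correct (though your parenthetical justification is incomplete: you must also rule out that a cell at $(i,c_*)$ is $i$-paired with a cell of row $i+1$ strictly to its right, which requires using the order in which the pairing is performed), and the easy case, where $(i+1,c_*)$ is the rightmost cell of its row, is fine and does not even need the northwest hypothesis.

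The main case, however, is a plan rather than a proof, and its central step fails as stated. First, a Kohnert move by definition sends the rightmost cell of a row to the \emph{first} available position above it in its column, so when the original move at step $j_0$ lands a cell in the empty position $(i+1,c_*)$, you cannot ``replace this move by one which lifts the same cell one row further'': that is not a legal Kohnert move. You would instead have to insert an extra move lifting the cell from $(i+1,c_*)$ to $(i,c_*)$, which requires that cell to be the rightmost cell of row $i+1$ at that moment --- precisely the condition that fails in $T$ in this case, and nothing you say guarantees it at step $j_0$. Second, the claim that $(i,c_*)$ is unoccupied in $T_{j_0-1}$ is unsupported: vacancy of $(i,c_*)$ in the final diagram $T$ does not imply vacancy at an intermediate stage, since an occupant of $(i,c_*)$ could move up at a later step; relatedly, ``the cell destined to end up at $(i+1,c_*)$'' is ambiguous, because that position can be occupied and vacated several times along the sequence. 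Third, the key verification --- that after the surgery every remaining move is still a valid Kohnert move and the final diagram is $e_i(T)$ --- is exactly the content of the theorem, and your argument for it conflates the $i$-pairing computed in the final diagram $T$ with the configurations of the intermediate diagrams $T_j$: the later moves of the witnessing sequence can freely move cells into and out of rows $i$ and $i+1$ to the right of column $c_*$, so the ``balanced block preserved under the original moves'' is not an established invariant, and the role of the northwest hypothesis is never actually used anywhere in the argument. For comparison, the proof in \cite{Ass-KC} is a substantially more involved induction using auxiliary structures on Kohnert diagrams, not a local modification of a witnessing sequence; if you want to pursue your route, the burden is to prove a precise statement about intermediate diagrams (e.g.\ an invariant of rows $i,i+1$ to the right of $c_*$ valid at every step, derived from northwest-ness of $D$), which is currently missing.
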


We remark the analog of Theorem~\ref{thm:KC-raise} for Kohnert lowering operators is false in general, though true under certain circumstances considered in Lemma~\ref{lem:left-justified} below. However, for $D$ northwest, Theorem~\ref{thm:KC-raise} makes the following well-defined.

\begin{definition}\cite{Ass-KC}\label{def:KC}
  For $D$ a northwest diagram, the \newword{Kohnert crystal on $D$} is the set $\KD(D)$ together with the Kohnert raising operators.
\end{definition}

For example Fig.~\ref{fig:KohnertCrystalNW} shows the Kohnert crystal for the bottom most diagram. Notice there are two connected components, and the leftmost corresponds precisely with the rightmost Demazure crystal in Fig.~\ref{fig:TableauxDemazureCrystals}. One can easily verify the rightmost component is also a Demazure crystal, and Theorem~5.3.4 of \cite{Ass-KC} proves the Kohnert crystal for northwest diagrams is always a union of Demazure crystals.

\begin{theorem}[\cite{Ass-KC}]\label{thm:5.3.4}
  For $D$ a northwest diagram, the Kohnert crystal on $\KD(D)$ is a disjoint union of Demazure crystals. That is, there exist partitions $\lambda^{(1)},\ldots, \lambda^{(m)}$ and permutations $w^{(1)},\ldots,w^{(m)}$ such that
  \[ \KD(D) \cong \B^{w^{(1)}}_{\lambda^{(1)}} \sqcup \cdots \sqcup \B^{w^{(m)}}_{\lambda^{(m)}}. \]
\end{theorem}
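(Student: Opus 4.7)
My plan is to proceed by induction on the number of cells in $D$, using the reduction operations underlying Magyar's recurrence (Theorem~\ref{thm:recur}) to reduce to structurally simpler northwest diagrams. The base case $D = \varnothing$ is trivial since $\KD(\varnothing) = \{\varnothing\}$ is the Demazure crystal $\B^{\mathrm{id}}_{\varnothing}$. For the inductive step there are two reductions to handle, mirroring (M2) and (M3).

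For the (M2) reduction, suppose the first column of $D$ is exactly $C_k$. Since column $1$ is leftmost and admits no Kohnert moves, every $T \in \KD(D)$ contains $C_k$ in its first column, and deletion gives a bijection $\KD(D) \to \KD(D - C_k)$ (as in the proof of Lemma~\ref{lem:tower}). I would verify that this bijection intertwines the Kohnert raising operators $e_i$: the cells in column $1$ of rows $i$ and $i{+}1$ always $i$-pair with each other, so deleting them does not alter the remaining pairing and hence does not alter $e_i$. By induction, each component of $\KD(D - C_k)$ is isomorphic to some $\B^{w}_\lambda$, and adding a uniform $(1^k)$ to every weight corresponds to replacing $\lambda$ by $\lambda + (1^k)$ (equivalently, tensoring with the one-dimensional crystal $\B_{(1^k)}$), giving a Demazure crystal of the same permutation type.

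For the (M3) reduction, suppose row $i$ of $D$ is contained in row $i+1$. By Lemma~\ref{lem:subset}, $s_i D$ is northwest and $\KD(s_i D) \subseteq \KD(D)$. The key structural claim is that
\[ \KD(D) = \Dem_i(\KD(s_i D)), \]
where $\Dem_i$ is as in Definition~\ref{def:demazure-op} applied componentwise to the embedding of $\KD(s_i D)$ in the ambient tableau crystal (via the injection of \cite{AS18}). Once this equality is established, the inductive hypothesis gives $\KD(s_i D) \cong \bigsqcup_j \B^{w^{(j)}}_{\lambda^{(j)}}$ with each $w^{(j)}$ having no reduced expression ending in $s_i$; applying $\Dem_i$ either extends each component to $\B^{s_i w^{(j)}}_{\lambda^{(j)}}$ or leaves it unchanged, preserving the disjoint-union-of-Demazures structure. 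The main obstacle is proving the structural claim: one direction requires showing that for any $T \in \KD(D) \setminus \KD(s_i D)$, iterated application of $e_i$ eventually yields an element of $\KD(s_i D)$, and the other requires that applying $f_i$ to any element of $\KD(s_i D)$ stays in $\KD(D)$. Both require a careful analysis, using the northwest property, of how the Kohnert $e_i$ and $f_i$ operators (which move a single unpaired cell between rows $i$ and $i{+}1$) can be realized as compositions of Kohnert moves inside $D$ or its swaps, and this is where the bulk of the combinatorial work of Section~\ref{sec:labeling} will be needed.
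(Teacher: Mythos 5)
First, a point of logic: this paper does not prove Theorem~\ref{thm:5.3.4} at all --- it imports it from \cite{Ass-KC} and then uses it as an input (explicitly at the start of the proof of Theorem~\ref{thm:hwt}, and implicitly in Proposition~\ref{prop:Di}) --- so your proposal has to stand entirely on its own, and it does not. The clearest failure is your (M2) step. The claim that deleting the column $C_k$ intertwines the Kohnert raising operators is false for $i=k$: the cell of $C_k$ in row $k$ can $k$-pair with cells outside column $1$, and removing it changes which cells are unpaired. Concretely, take $D=\{(1,1),(2,2)\}$, which is northwest with first column exactly $C_1$. Then $\KD(D)=\bigl\{\{(1,1),(2,2)\},\{(1,1),(1,2)\}\bigr\}$ and both elements are killed by $e_1$ (in the first, the row-$2$ cell pairs with $(1,1)$), so $\KD(D)\cong\B^{\mathrm{id}}_{(1,1)}\sqcup\B^{\mathrm{id}}_{(2,0)}$ has two components, whereas $\KD(D-C_1)=\bigl\{\{(2,2)\},\{(1,2)\}\bigr\}\cong\B^{s_1}_{(1,0)}$ is connected: the deletion bijection sends an $e_1$-killed element to one that is not $e_1$-killed. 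So neither the number of components, nor the partitions (``$\lambda+(1^k)$''), nor the permutations are preserved; moreover $\B_{(1^k)}$ is the $k$-th exterior power crystal for $\GL_n$, not one-dimensional when $n>k$. This is the crystal shadow of the fact that $x_1\cdots x_k\,\key_{\comp{a}}$ is in general a sum of several Demazure characters, so no argument of the shape ``each component shifts by $(1^k)$ with the same permutation'' can work; the (M2) step needs a genuinely different idea.

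Your (M3) step has a different problem: the structural identity $\KD(D)=\Dem_r(\KD(s_rD))$ that you defer to ``the combinatorial work of Section~\ref{sec:labeling}'' is exactly what this paper proves, but its proof here is not available to you, because Theorem~\ref{thm:hwt} (hence Corollary~\ref{cor:nested}) invokes Theorem~\ref{thm:5.3.4}, and Proposition~\ref{prop:Di} assumes its input is already a union of Demazure crystals; quoting that work inside a proof of Theorem~\ref{thm:5.3.4} is circular. One can extract a non-circular core (the argument via Lemma~\ref{lem:hwt} really only uses $e_r(T)=0$, Theorem~\ref{thm:KC-raise} gives $e_r$-closure, and Theorem~\ref{thm:close_down} gives $f_r$-closure), but the last of these rests on the labeling and rectification theorems of \cite{Ass-KC} together with the key-diagram case $\KD(\D(\comp{a}))\cong\B^w_\lambda$ --- precisely the machinery from which Theorem~\ref{thm:5.3.4} is obtained there --- so at best you would be re-deriving the theorem from its own ingredients rather than giving an independent induction. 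Even granting the identity, two further repairs are needed: your induction is on the number of cells, but $s_rD$ has the same number of cells as $D$, so you need a finer well-founded measure (e.g.\ number of cells, then the total row index, which does drop under $D\mapsto s_rD$); and to conclude $\Dem_r(\B^{u}_{\mu})=\B^{s_ru}_{\mu}$ you must know that each component of $\KD(s_rD)$ sits inside a normal ambient crystal compatibly with the Kohnert operators, whereas the injection of \cite{AS18} you cite is defined only for key diagrams $\KD(\D(\comp{a}))$, not for arbitrary northwest $D$.
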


To establish Theorem~\ref{thm:recur}(M3) and prove our main result, by Proposition~\ref{prop:Di}, it is enough to show that for $D$ northwest with row $r$ a subset of row $r+1$, we have 
\begin{equation}\label{e:Dem_si}
    \KD(D) = \Dem_r(\KD(s_r D)) .
\end{equation} 
By Lemma~\ref{lem:subset}, we have $\KD(s_r D) \subseteq \KD(D)$. To begin to relate $\KD(D)$ with $\Dem_r(\KD(s_r D))$, we have the following.

\begin{lemma}\label{lem:hwt}
    Let $D$ be a diagram, and $r$ a row index such that row $r$ is contained in row $r+1$. Given $T\in\KD(D)$, there exists $S\in\KD(s_r D)$ such that 
  \begin{enumerate}
      \item[(1)] $S$ and $T$ agree in all rows $t \neq r,r+1$;
      \item[(2)] if $S,T$ differ in some column $c$ at row $r$ or $r+1$, then $T$ has a cell only in row $r+1$ and $S$ has a cell only in row $r$.
      \item[(3)] if, at rows $r,r+1$, $S,T$ differ in column $c$ and agree in some column $b<c$, and if, in column $b$, row $r$ has a cell, so does row $r+1$.
  \end{enumerate}
\end{lemma}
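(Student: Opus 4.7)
The plan is to define $S := e_r^k(T)$, where $k$ is the largest integer such that $e_r^k(T) \neq 0$, and verify the four required properties. By Theorem~\ref{thm:KC-raise} the iterates $e_r^j(T)$ remain in $\KD(D)$, so $S$ is well-defined. Each application of $e_r$ in Definition~\ref{def:raise-diagram} moves the rightmost $r$-unpaired cell in row $r+1$ into row $r$, fixing all other rows, which immediately gives condition (1), and converts a single column from profile $(0,1)$ to profile $(1,0)$ at rows $r, r+1$, giving condition (2).

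For condition (3), the key combinatorial input is a structural property of Kohnert diagrams $T \in \KD(D)$ under the hypothesis row $r \subseteq$ row $r+1$ of $D$: every $(1,0)$-profile column of $T$ at rows $r, r+1$ lies weakly to the right of every $r$-unpaired $(0,1)$-profile column. Using the bracket-matching interpretation of the $r$-pairing (opens for row-$r$ cells, closes for row-$(r{+}1)$ cells, read left-to-right), unpaired closes always lie weakly to the left of unpaired opens; the structural claim extends this to also include \emph{paired} opens, hence to all $(1,0)$-profile columns of $T$. Once established, this implies that each column pushed by $e_r$ sits weakly to the left of every $(1,0)$-profile column of $T$, which is precisely what condition (3) demands.

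The main obstacle is establishing both the structural property above and the membership $S \in \KD(s_r D)$. I would prove both simultaneously by induction on the length $N$ of a shortest sequence of Kohnert moves from $D$ to $T$. In the base case $T = D$, the containment assumption guarantees that $D$ has no $(1,0)$-profile columns at rows $r, r+1$, so iterating $e_r$ converts every $(0,1)$-profile column to $(1,0)$, producing $S = s_r D$, which lies in $\KD(s_r D)$ trivially, and the structural claim holds vacuously. For the inductive step, given $T' \to T$ by a single Kohnert move and $S' := e_r^{k'}(T') \in \KD(s_r D)$ satisfying the lemma for $T'$, one analyzes the interaction of this move with rows $r, r+1$: moves lying entirely outside these rows apply verbatim to $S'$; moves that originate in, terminate at, or pass through them are reconstructed on $S'$ using conditions (2)--(3) for $(S', T')$, possibly followed by additional applications of $e_r$ to restore the $r$-highest-weight form. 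The delicate case—and the principal technical obstacle—is when the Kohnert move from $T'$ to $T$ creates a new $(1,0)$-profile column or shifts an existing one relative to the unpaired $(0,1)$-profile columns; here the column-preserving nature of Kohnert moves combined with the inductive structural property ensures that the resulting $S$ remains in $\KD(s_r D)$ and that conditions (1)--(3) are preserved.
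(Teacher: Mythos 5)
Your choice $S:=e_r^k(T)$ does not satisfy condition (3), so the central claim of the proposal is false as stated. Concretely, take $r=1$ and $D=\{(2,1),(3,2),(3,3)\}$ (matrix coordinates), which is northwest and has row $1=\varnothing\subseteq$ row $2$. The sequence of Kohnert moves $(2,1)\to(1,1)$, $(3,3)\to(2,3)$, $(3,2)\to(2,2)$ gives $T=\{(1,1),(2,2),(2,3)\}\in\KD(D)$. In the $1$-pairing of $T$, the cell $(2,2)$ pairs with $(1,1)$ and $(2,3)$ is unpaired, so $e_1(T)=\{(1,1),(1,3),(2,2)\}$ and $e_1^2(T)=0$; thus your $S=e_1(T)$ differs from $T$ at column $c=3$, agrees with $T$ at column $b=1$, and column $1$ has a cell in row $1$ but none in row $2$ --- exactly the configuration condition (3) forbids. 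The same example refutes the ``structural property'' your argument for (3) hinges on: column $1$ is a $(1,0)$-profile column lying strictly left of the unpaired $(0,1)$-profile column $3$. The bracket-matching fact you cite controls unpaired opens only; paired row-$r$ cells can sit far to the left of unpaired row-$(r+1)$ cells, and nothing in the hypothesis ``row $r\subseteq$ row $r+1$ of $D$'' propagates to arbitrary $T\in\KD(D)$ in the way you need. (Note also that in this example the correct witness is $S=T$ itself, which indeed lies in $\KD(s_1D)$; the cells that must be ``pre-raised'' to build $S$ are generally not the $e_r$-unpaired cells of $T$.)

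Beyond this, the remaining load-bearing step --- that $e_r^k(T)\in\KD(s_rD)$ --- is only sketched: the inductive step (``reconstructed on $S'$ \ldots possibly followed by additional applications of $e_r$'', ``the delicate case \ldots ensures'') is precisely where the work lies, and since the statement being propagated includes the false property above, the induction cannot close as formulated. Two further mismatches with the paper: the lemma is stated for arbitrary diagrams $D$, whereas your appeal to Theorem~\ref{thm:KC-raise} requires $D$ northwest (and in any case membership in $\KD(D)$ is not what is needed); and the paper's proof takes a different route entirely, namely an induction along the Kohnert moves from $D$ to $T$ in which $S$ is built by mirroring each move on a diagram in $\KD(s_rD)$ (the case analysis A1--A4, B1--B2, C1--C2), so that conditions (1)--(3) are invariants of that construction rather than consequences of $r$-pairing combinatorics of $T$ alone. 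If you want to salvage your approach, you would have to replace $e_r^k(T)$ by a choice of raising moves adapted to the history of $T$, which is essentially what the paper's induction does.
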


\begin{proof}
  We proceed by induction on the number of Kohnert moves used to obtain $T$ from $D$. For the base case, we observe $s_r D$ is obtained by applying raising operators $e_r$ to $D$ by the proof of Lemma~\ref{lem:subset}, and so for $T=D$ we may take $S=s_r D$. Thus suppose for some $T\in\KD(D)$, we have constructed $S\in\KD(s_r D)$ satisfying conditions (1)-(3). Suppose $T'$ is obtained by a Kohnert move on $T$, say moving a cell $x$ which lies in column $c$. We will construct $S'$ by a (possibly trivial) sequence of Kohnert moves on $S$ such that conditions (1)-(3) are satisfied for $S'$ and $T'$. 
  
  \textbf{Case A: $x$ not in rows $r,r+1$.}
  \begin{itemize}
  \item \textbf{Case A1:} $T'$ is obtained by moving $x$ between rows that lie either strictly above row $r$ or strictly below row $r+1$. 
  
  By condition (1), $S$ and $T$ agree in all rows $t \neq r, r+1$, so we can perform the same Kohnert move on $S$ to obtain $S'$. Since this Kohnert move does not move any cells in rows $r$ or $r+1$, conditions (1)-(3) hold by induction.
  
  \item \textbf{Case A2:} $T'$ is obtained by moving $x$ from row $s > r +1$ to row $q < r$.
  
  In order to perform this Kohnert move, $T$ must have cells in rows $r$ and $r+1$ of column $c$. By condition (2) $S$ and $T$ must then agree in rows $r$ and $r+1$ of column $c$. Thus by condition (1) we can perform the same Kohnert move on $S$ to obtain $S'$, and conditions (1)-(3) once again hold by induction since this Kohnert move does affect any cells in rows $r$ or $r+1$.
  
  \item \textbf{Case A3:} $T'$ is obtained by moving $x$ from row $s > r+1$ to row $r+1$.
  
  Since $x$ moves into row $r+1$ from $T$ to $T'$, condition (2) implies that $S$ and $T$ agree in column $c$; otherwise, $T$ would have a cell in row $r+1$, column $c$, which would block $x$ from moving into row $r+1$. Then by condition (1) we can again perform the same Kohnert move on $S$ to obtain $S'$. In particular, $S'$ and $T'$ agree in column $c$, so conditions (1) and (2) immediately follow by induction. Since $S'$ and $T'$ both have a cell in row $r+1$, column $c$, the implication of condition (3) is satisfied as well. 
  
  \item \textbf{Case A4:} $T'$ is obtained by moving $x$ from row $s > r+1$ to row $r$.
  
  Here, $S$ and $T$ may not agree in column $c$ by condition (2). If this is the case, then in rows $r$ and $r+1$ of column $c$, $T$ has a cell only in row $r+1$ while $S$ has a cell only in row $r$. However, after moving $x$, $T'$ has cells in rows $r$ and $r+1$ of column $c$. Thus in $S$, we can move $x$ up to row $r+1$ so that $S'$ and $T'$ both have cells in rows $r$ and $r+1$ of column $c$. 
  
  If, on the other hand, $S$ and $T$ do agree in column $c$, then we can perform the same Kohnert move on $S$ which we performed on $T$ to obtain the same result: $S'$ and $T'$ both have cells in rows $r$ and $r+1$ of column $c$, and therefore the columns are identical in either case by condition (1). Conditions (1) and (2) again follow by induction, and condition (3) holds because $S'$ and $T'$ both have cells in rows $r$ and $r+1$ of column $c$.
  \end{itemize}

  \textbf{Case B: $x$ in row $r+1$.}
 \begin{itemize}
 \item \textbf{Case B1:} $S$ and $T$ agree in column $c$.
 
  In this case we can perform the same Kohnert move on $S$ to obtain $S'$. Conditions (1) and (2) then hold by induction. For condition (3), notice that since $x$ is able to move out of row $r+1$ from $T$ to $T'$, there can be no columns right of $c$ in which $S$ and $T$ differ; otherwise by condition (2) there would be a cell in row $r+1$ of $T$ which blocks $x$ from moving. Thus condition (3) holds vacuously for every column weakly right of $c$, and holds by induction for every column left of $c$.
 
 \item \textbf{Case B2:} $S$ and $T$ differ in column $c$. 
 
 By condition (2), $T$ has a cell only in row $r+1$, while $S$ only has a cell in row $r$, in column $c$. In this case $x$ lies in row $r$ in $T'$, and we define $S' = S$ so that $S$ and $T$ agree in column $c$. Conditions (1) and (2) hold by induction. Condition (3) also holds by induction because, as we noted above, there can be no columns right of $c$ in which $S$ and $T$ differ.
 \end{itemize}

  \textbf{Case C: $x$ in row $r$. } By condition (2), $S$ and $T$ must agree in column $c$.
\begin{itemize}
\item \textbf{Case C1:} $S$ and $T$ agree in all columns right of $c$.

In this case, we can perform the same Kohnert move on $S$ to obtain $S'$ so that $S'$ and $T'$ agree in column $c$. Then conditions (1) and (2) hold by induction. By assumption column $c$ is right of all columns in which $S$ and $T$ differ, so condition (3) also holds by induction.

\item \textbf{Case C2:} $S$ and $T$ differ in some column right of $c$.

Here $S$ and $T$ both have cells in row $r$, column $c$ and there exists some column $d > c$ in which $S$ and $T$ differ. By condition (3), $S$ and $T$ must both have cells in rows $r$ and $r+1$ of column $c$. Furthermore $T$ has no cells in row $r$ right of column $c$ --- otherwise $x$ is blocked from moving --- and this implies that there is at most one cell in rows $r$ and $r+1$ per column right of $c$ in $S$. Indeed, for any column $d > c$ either
\begin{enumerate}
\item[(i)] $S$ and $T$ differ in rows $r$ and $r+1$ of column $d$, and condition (2) implies that $S$ has a single cell in row $r$, column $d$, or 
\item[(ii)] $S$ and $T$ agree in rows $r$ and $r+1$ of column $d$, in which case $S$ has no cells in row $r$, column $d$, since $T$ does not either. 
\end{enumerate}

We define $S'$ as follows: first, move every cell in row $r+1$ of $S$ which lies in a column strictly right of $c$ into row $r$ via a sequence of Kohnert moves to obtain an intermediate diagram $\tilde{S}$. Let $y$ be the cell in row $r+1$, column $c$ of $S$ (which is in the same position in $\tilde{S}$). Then perform a Kohnert move on $\tilde{S}$ by moving $y$ to obtain $S'$. By construction, $y$ lands in the same position in $S'$ as $x$ lands in $T'$. %This is illustrated in Figure \ref{fig:row_r}. 

Condition (1) holds as the only new cell introduced outside of rows $r$ and $r+1$ lies in the same position in $S'$ and $T'$. Condition (2) holds in all columns left of $c$ by induction, and we now show it holds in all columns weakly right of $c$. In rows $r$ and $r+1$ of column $c$, $T'$ has a cell only in row $r+1$ and $S'$ has a cell only in row $r$, so condition (2) holds. For any column $d > c$, we have two possibilities:
\begin{enumerate}
\item[(i)] $S$ and $T$ differ in rows $r$ and $r+1$. Then $S$ has a cell in row $r$, column $d$, and this cell is in the same place in $S'$. Since $T$ and $T'$ agree right of column $c$, condition (2) holds.
\item[(ii)] $S$ and $T$ agree in rows $r$ and $r+1$. Without loss of generality column $d$ is nonempty, so that $S$ and $T$ have a cell only in row $r+1$ of column $d$. Then in $S'$ this cell is moved to row $r$, column $d$, while it stays fixed in $T'$. Therefore $S'$ and $T'$ differ in column $d$ in rows $r$ and $r+1$; $T'$ has a cell only in row $r+1$ and $S'$ has a cell only in row $r$, so condition (2) is satisfied.
\end{enumerate}
We have just shown that $S'$ and $T'$ differ in rows $r$ and $r+1$ in all columns right of $c$ in accordance with condition (2). Since we assumed that $S$ and $T$ differ in some column right of $c$, condition (3) holds by induction.
\end{itemize}
Thus $S^\prime$ is constructed so that (1)-(3) hold. The result follows by induction.
\end{proof}

We now establish that $\KD(s_r D)$ and $\KD(D)$ have the same number of connected components, each with the same highest weight.

\begin{theorem}\label{thm:hwt}
 Let $D$ be a northwest diagram and $r$ a row index such that row $r$ is contained in row $r+1$. By Theorem~\ref{thm:5.3.4}, suppose the Kohnert crystals decompose into Demazure crystals (with minimal length permutations) as 
 \[ \KD(s_r D) \cong \bigsqcup_{i=1}^{s} \B^{u^{(i)}}_{\mu^{(i)}} 
   \hspace{1em} \text{and} \hspace{1em}
   \KD(D) \cong \bigsqcup_{i=1}^{t} \B^{v^{(i)}}_{\nu^{(i)}} .
 \]
Then $s=t$, and for all $i$, we have $\mu^{(i)} = \nu^{(i)}$ and $u^{(i)} \preceq v^{(i)}$ in Bruhat order. 
\end{theorem}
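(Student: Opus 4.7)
The plan is to match connected components of $\KD(s_r D)$ and $\KD(D)$ via their highest weight elements, and then deduce the Bruhat inequality from the nesting of Demazure subcrystals sharing a common highest weight. Both crystals decompose into Demazure pieces by Theorem~\ref{thm:5.3.4}, and the unique highest weight element of a Demazure crystal $\B^w_\lambda$ is characterized by $e_i(\cdot)=0$ for all $i$, so it suffices to establish a weight-preserving bijection between highest weight elements of the two crystals.

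The crux is to prove that every highest weight element $T$ of $\KD(D)$ actually lies in $\KD(s_r D)$. I would invoke Lemma~\ref{lem:hwt} to produce $S\in\KD(s_r D)$ satisfying (1)-(3) with respect to $T$, and then show $S=T$ by contradiction. Assuming $S\neq T$, let $c$ be the rightmost column at which they disagree; condition~(2) forces that at column $c$, $T$ has a cell only in row $r+1$. Combining conditions~(2) and~(3), one checks that no column $b\leq c$ of $T$ has a cell in row $r$ without a matching cell in row $r+1$: for columns $b<c$ where $S,T$ agree this is condition~(3), while for columns $b<c$ where they differ, condition~(2) forbids a row-$r$ cell of $T$ altogether. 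Under the $r$-pairing rule, the row-$r$ and row-$(r+1)$ cells of $T$ in these both-rows columns pair with each other within their own columns, leaving no unpaired row-$r$ cell weakly to the left of $c$; the cell at $(r+1,c)$ is then forced to be $r$-unpaired, contradicting $e_r(T)=0$. This column-by-column analysis of the pairing rule is the main technical obstacle.

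Once every highest weight of $\KD(D)$ is shown to lie in $\KD(s_r D)$, the converse is immediate: the highest weight of any component of $\KD(s_r D)$ is annihilated by every Kohnert raising operator on diagrams, and hence is a highest weight of the ambient $\KD(D)\supseteq\KD(s_r D)$. Uniqueness of the highest weight in each Demazure component of $\KD(D)$ then yields a bijection between components, giving $s=t$ and $\mu^{(i)}=\nu^{(i)}$ after reindexing. Under this identification $\B^{u^{(i)}}_{\mu^{(i)}}\subseteq\B^{v^{(i)}}_{\nu^{(i)}}$ share their highest weight element, and the standard fact that two minimal-length Demazure subcrystals of a common connected crystal sharing a highest weight are nested if and only if the indexing permutations are comparable in Bruhat order yields $u^{(i)}\preceq v^{(i)}$, completing the proof.
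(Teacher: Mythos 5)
Your proposal is correct and follows essentially the same route as the paper: highest weights of $\KD(s_r D)$ transfer to $\KD(D)$ via Lemma~\ref{lem:subset}, highest weights of $\KD(D)$ are shown to equal their companion $S$ from Lemma~\ref{lem:hwt} by the same conditions~(2)--(3) plus $r$-pairing contradiction, and the Bruhat comparison comes from the nesting of Demazure components. The only cosmetic difference is that you take the rightmost column of disagreement, whereas the paper's argument works with any disagreeing column.
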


\begin{proof}
  Each connected Demazure crystal $\B^w_{\lambda}$ has a unique highest weight element, say $T$, and $T$ satisfies $\wt(T)=\lambda$. Thus there exist elements $T_1,\ldots,T_s\in\KD(s_r D)$ such that $e_i(T_j)=0$ for all $i,j$, and $\wt(T_j) = \mu^{(j)}$. By Lemma~\ref{lem:subset}, we have $T_j\in\KD(D)$, and since the crystal operators are independent of the ambient diagram, each $T_j$ is also a highest weight for $\KD(D)$.

  Conversely, let $T$ be a highest weight element of $\KD(D)$, and let $S\in \KD(s_r D)$ be as in Lemma~\ref{lem:hwt}. Suppose there exists a column $c$ where $T$ and $S$ disagree. By condition (2), $T$ has a cell $x$ in row $r+1$ but not in row $r$ of this column. Any column $b<c$ that has a cell $y$ in row $r$ agrees with $S$ by condition (2), hence by condition (3) also has a cell in row $r+1$ to which $y$ must be $r$-paired, and so $x$ cannot be $r$-paired to $y$. This contradicts the premise that $e_r(T)=0$ so we conclude $T$ and $S$ are identical, that is $T\in \KD(s_r D)$. In particular, $\KD(s_r D)$ contains the highest weight diagrams of $\KD(D)$.
  
  That $u^{(i)}\preceq w^{(i)}$ follows because $\B^{u^{(i)}}_{\lambda^{(i)}}\subseteq \B^{w^{(i)}}_{\lambda^{(i)}}$ by Lemma~\ref{lem:subset}.
\end{proof}

We reformulate Theorem~\ref{thm:hwt} in terms of the Demazure operators as follows.

\begin{corollary}\label{cor:nested}
  For $D$ northwest with row $r$ a subset of row $r+1$, we have $f_r^k(s_r D) = D$, where $k$ is the maximum integer $i$ such that $f_r^i(s_r D) \neq 0$. In particular, we have nested crystals 
  \[ \KD(s_r D) \subseteq \KD(D) \subseteq \Dem_r(\KD(s_r D)) , \]
  and each has the same number of connected components.
\end{corollary}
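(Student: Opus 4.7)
The plan is to establish the three conclusions of the corollary in sequence: first the identity $f_r^k(s_r D) = D$ with $k$ maximal, then the nested inclusions (using Lemma~\ref{lem:subset} for one direction and the internal argument of Theorem~\ref{thm:hwt} for the other), and finally the equality of component counts as an immediate consequence.

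For the identity, I would argue that $s_r D = e_r^k(D)$, which then gives $f_r^k(s_r D) = D$ by the inverse relation between $e_r$ and $f_r$ on nonzero elements. Since row $r \subseteq $ row $r+1$ in $D$, each cell of row $r$ is $r$-paired with the cell directly below it in the same column, so the only unpaired cells in row $r+1$ are the ``extra'' cells, i.e., those in columns with no cell in row $r$. The northwest hypothesis is essential here: it forces every such extra to lie strictly to the right of the rightmost cell of row $r$, since otherwise the northwest condition would demand a cell of row $r$ at that extra column. It follows that iterating $e_r$ peels off the extras one at a time from right to left, landing at $s_r D$ after exactly $k$ steps. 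Maximality is automatic because every cell of row $r$ in $D$ is already $r$-paired, forcing $f_r(D) = 0$.

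The inclusion $\KD(s_r D) \subseteq \KD(D)$ is Lemma~\ref{lem:subset}. For the reverse inclusion, I would take any $T \in \KD(D)$ and iterate $e_r$ to reach an $r$-highest element $T^\prime$, which remains in $\KD(D)$ by Theorem~\ref{thm:KC-raise}. The key observation is that the argument inside the proof of Theorem~\ref{thm:hwt} invokes only the single hypothesis $e_r(T^\prime) = 0$, not the full highest-weight condition; hence it applies verbatim to show that $T^\prime$ agrees with its Lemma~\ref{lem:hwt} companion and therefore lies in $\KD(s_r D)$. By definition, $T \in \Dem_r(\KD(s_r D))$.

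Finally, Theorem~\ref{thm:hwt} establishes that $\KD(s_r D)$ and $\KD(D)$ have the same number of connected components. Since $\Dem_r$ acts separately on each connected Demazure component and sends each to either itself or a strictly larger Demazure crystal sharing the same highest weight, it neither merges nor splits components. Thus $\Dem_r(\KD(s_r D))$ inherits the same count, and the sandwich of inclusions forces all three numbers to agree. The main obstacle lies in Part 1: explicitly describing the $r$-pairing of $D$ and tracking the iterated $e_r$ moves, which is where the northwest hypothesis does the essential work of confining the extra cells to the right of row $r$.
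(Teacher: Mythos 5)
Your proposal is correct and follows essentially the same route the paper intends: the identity $f_r^k(s_r D)=D$ comes from the raising-operator description of $s_rD$ in Lemma~\ref{lem:subset} (with $f_r(D)=0$ since row $r\subseteq$ row $r+1$ forces all row-$r$ cells to be $r$-paired), the first inclusion is Lemma~\ref{lem:subset}, and the second inclusion uses Theorem~\ref{thm:KC-raise} together with the observation that the ``conversely'' argument in the proof of Theorem~\ref{thm:hwt} only needs $e_r(T)=0$, so every $r$-highest element of $\KD(D)$ already lies in $\KD(s_rD)$. Your component-count argument via $\Dem_r$ acting on each Demazure component matches the paper's reading of Theorem~\ref{thm:hwt}, so there is nothing to correct.
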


In order to prove the latter containment is an equality, thus establishing \eqref{e:Dem_si}, we will show $\Dem_r(\KD(D)) = \KD(D)$ whenever $D$ is northwest with row $r$ a subset of row $r+1$. This amounts to showing the lowering analog of \cite[Thm 4.1.1]{Ass-KC}.

\begin{lemma}\label{lem:left-justified}
 Let $D$ be a left-justified diagram such that row $r \subseteq$ row $r+1$. Then $\Dem_r(\KD(D)) = \KD(D)$. In particular, for any $T \in \KD(D)$ such that $f_r(T) \neq 0$, we have $f_r(T) \in \KD(D)$.
\end{lemma}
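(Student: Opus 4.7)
The plan is a short character-counting argument. Since $D$ is left-justified, write $\mathbf{a} = \wt(D)$ so that $D = \D(\mathbf{a})$; the hypothesis that row $r$ is contained in row $r+1$ becomes $a_r \leq a_{r+1}$. Because left-justified diagrams are northwest, Theorem~\ref{thm:5.3.4} decomposes $\KD(D)$ as a disjoint union of Demazure crystals, so Proposition~\ref{prop:Di} applies to give
\[
\ch(\Dem_r(\KD(D))) = \pi_r(\ch(\KD(D))) = \pi_r(\kohnert_D).
\]
Kohnert's theorem (Theorem~\ref{thm:kohnert}) identifies $\kohnert_D$ with the Demazure character $\key_\mathbf{a}$, so the remaining task is the identity $\pi_r(\key_\mathbf{a}) = \key_\mathbf{a}$ under the hypothesis $a_r \leq a_{r+1}$.

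To establish this identity, let $w$ be the minimum-length permutation sorting $\mathbf{a}$ to a partition $\lambda$, so $\key_\mathbf{a} = \pi_w(x^\lambda)$. When $a_r < a_{r+1}$, comparing $\lambda$-values at positions $w^{-1}(r)$ and $w^{-1}(r+1)$ forces $w^{-1}(r) > w^{-1}(r+1)$; thus $s_r w \prec w$ in Bruhat order, $w$ admits a reduced expression with $s_r$ as its leftmost factor, and the identity $\pi_r \pi_w = \pi_w$ follows from the idempotency $\pi_r^2 = \pi_r$. When $a_r = a_{r+1}$, minimality of $w$ forces $w^{-1}(r) = i < i+1 = w^{-1}(r+1)$, so $\lambda_i = \lambda_{i+1} = a_r$. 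One then uses the relation $s_r w = w s_i$ (with $w s_i$ one step longer than $w$), giving $\pi_r \pi_w = \pi_w \pi_i$ via the braid-preserving independence of reduced expression, together with the direct computation $\pi_i(x^\lambda) = x^\lambda$ when $\lambda_i = \lambda_{i+1}$, to conclude $\pi_r \pi_w(x^\lambda) = \pi_w(x^\lambda)$.

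Combining these steps gives $\ch(\Dem_r(\KD(D))) = \ch(\KD(D))$; specializing all $x_i$ to $1$ yields $|\Dem_r(\KD(D))| = |\KD(D)|$. The trivial inclusion $\KD(D) \subseteq \Dem_r(\KD(D))$, coming from $k = 0$ in Definition~\ref{def:demazure-op}, combined with equal finite cardinalities, forces the two sets to be equal. The ``in particular'' assertion is then immediate: for $T \in \KD(D)$ with $f_r(T) \neq 0$, we have $e_r(f_r(T)) = T \in \KD(D)$, so $f_r(T) \in \Dem_r(\KD(D)) = \KD(D)$. The main subtle point is the case $a_r = a_{r+1}$ of the Demazure identity, where $\pi_r$-invariance does not follow from idempotency alone (since here $s_r w \succ w$) and must instead be deduced from the repeated partition value $\lambda_i = \lambda_{i+1}$.
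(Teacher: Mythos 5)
Your proposal is correct, but it takes a genuinely different route from the paper's proof. The paper stays entirely at the level of sets: it identifies $\KD(D)$ with a single Demazure crystal $\B^w_{\lambda}$, writes $\B^w_{\lambda} = \Dem_r\Dem_{i_k}\cdots\Dem_{i_1}(\{u_{\lambda}\})$ for an expression of $w$ beginning with $s_r$, and concludes from the idempotency $\Dem_r^2=\Dem_r$, with no characters or counting involved. You instead argue through characters: Theorem~\ref{thm:5.3.4} and Proposition~\ref{prop:Di} give $\ch(\Dem_r(\KD(D)))=\pi_r(\kohnert_D)$, Theorem~\ref{thm:kohnert} identifies $\kohnert_D=\key_{\comp{a}}$, you verify the invariance $\pi_r\key_{\comp{a}}=\key_{\comp{a}}$ for $a_r\le a_{r+1}$ by a case analysis on the sorting permutation, and you then upgrade the character identity to set equality using the trivial inclusion $\KD(D)\subseteq\Dem_r(\KD(D))$ together with finiteness. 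The paper's route buys economy: only the single-crystal identification and a set-theoretic idempotency are needed, with no specialization step. Your route buys robustness and, notably, precision in the boundary case: when $a_r=a_{r+1}$ the minimal-length sorting permutation satisfies $s_rw\succ w$, so the paper's one-line assertion that $a_r\le a_{r+1}$ forces $s_rw\prec w$ is not literally correct there, whereas your relation $s_rw=ws_i$ with $\lambda_i=\lambda_{i+1}$ (equivalently, $\pi_i(x^{\lambda})=x^{\lambda}$) supplies exactly the missing argument, and your identification of adjacent positions $w^{-1}(r)=i$, $w^{-1}(r+1)=i+1$ from minimality is justified since minimal coset representatives increase along blocks of equal parts of $\lambda$. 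One caveat: applying Proposition~\ref{prop:Di} to $\KD(D)$ implicitly uses that the isomorphism of Theorem~\ref{thm:5.3.4} intertwines full $r$-strings, so that $\Dem_r$ computed on diagrams agrees with $\Dem_r$ on the tableau side; this is the same convention the paper itself adopts when it applies Proposition~\ref{prop:Di} to $\KD(s_rD)$ in Theorem~\ref{thm:kohnert_recur}, so it is at the paper's own level of rigor.
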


\begin{proof}
  Let $\comp{a} = \wt(D)$, and let $w$ be the minimal length permutation that sorts $\comp{a}$ to a partition, say $\lambda$. Then $\KD(D) \cong \B^w_{\lambda}$. The row containment condition on $D$ translates to $\comp{a}$ as $a_r \leq a_{r+1}$, and so $s_r \cdot w \prec w$ in Bruhat order. In particular, $w$ has a reduced expression of the form $s_r s_{i_k} \cdots s_{i_1}$. Then we have
  \[ \KD(D) \cong \B^w_{\lambda} \cong \Dem_r \Dem_{i_k} \cdots \Dem_{i_1} (\{u_{\lambda}\}). \]
  The result now follows from the observation $\Dem_r^2(X) = \Dem_r(X)$ for any $X$. 
\end{proof}

To establish this for $D$ northwest, we leverage labelings of Kohnert diagrams introduced in \cite{AS18} and generalized in \cite{Ass-KC}.

%%%%%%%%%%%%%%%%%%%%%%%%%%%%%%%%%%%%%%%%%%%%%%%%%%%%%%%%%%%%%%%%
%
\section{Labelings of diagrams}
%
%%%%%%%%%%%%%%%%%%%%%%%%%%%%%%%%%%%%%%%%%%%%%%%%%%%%%%%%%%%%%%%%
\label{sec:labeling}

Unlike tableaux, Kohnert diagrams do not, by default, have labels their cells. However, in this section we consider a canonical labeling of the cells of a diagram that allows one to determine if a given diagram belongs to a set $\KD(D)$ for a particular northwest diagram $D$. Thus it will be essential for showing $f_r(T)\in\KD(D)$ for $T\in\KD(D)$ when $D$ is northwest with row $r$ a subset of row $r+1$.

%%%%%%%%%%%%%%%%%%%%%%%%%%%%%%%%%%%%%%%%%%%%%%%%%%%%%%%%%%%%%%%%
\subsection{Kohnert tableaux}
%%%%%%%%%%%%%%%%%%%%%%%%%%%%%%%%%%%%%%%%%%%%%%%%%%%%%%%%%%%%%%%%
\label{sec:labeling-kohnert}

Assaf and Searles \cite[Def~2.5]{AS18} give an algorithm by which the cells of a diagram $T$ are labeled with respect to a weak composition $\comp{a}$ in order to determine whether or not $T\in\KD(\D(\comp{a}))$.

\begin{definition}[\cite{AS18}]
  For a weak composition $\comp{a}$ and a diagram $T\in\KD(\D(\comp{a}))$, the \newword{Kohnert labeling of $T$ with respect to $\comp{a}$}, denoted by $\Label_{\comp{a}}(T)$, assigns labels to cells of $T$ as follows. Assuming all columns right of column $c$ have been labeled, assign labels $\{i \mid a_i \geq c\}$ to cells of column $c$ from top to bottom by choosing the smallest label $i$ such that the $i$ in column $c+1$, if it exists, is weakly higher.
\label{def:labelling_algorithm}
\end{definition}

For example, the columns of the diagram in Figure~\ref{fig:label} are labeled with respect to $(3,0,5,1,4)$, from right to left, with entries $\{3\}$, $\{3,5\}$, $\{1,3,5\}$, $\{1,3,5\}$, $\{1,3,4,5\}$.

\begin{figure}[ht]
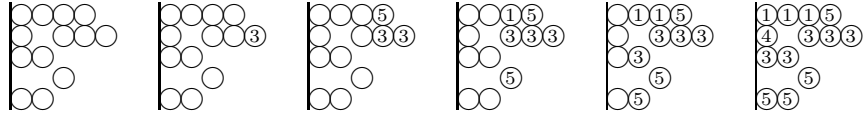

    \begin{displaymath}
    \arraycolsep=\cellsize
    \begin{array}{cccccc}
      \cirtab{ ~ & ~ & ~ & ~ \\ ~ & & ~ & ~ & ~ \\ ~ & ~ & \\ & & ~ \\ ~ & ~ } & 
      \cirtab{ ~ & ~ & ~ & ~ \\ ~ & & ~ & ~ & 3 \\ ~ & ~ & \\ & & ~ \\ ~ & ~ } & 
      \cirtab{ ~ & ~ & ~ & 5 \\ ~ & & ~ & 3 & 3 \\ ~ & ~ & \\ & & ~ \\ ~ & ~ } & 
      \cirtab{ ~ & ~ & 1 & 5 \\ ~ & & 3 & 3 & 3 \\ ~ & ~ & \\ & & 5 \\ ~ & ~ } & 
      \cirtab{ ~ & 1 & 1 & 5 \\ ~ & & 3 & 3 & 3 \\ ~ & 3 & \\ & & 5 \\ ~ & 5 } & 
      \cirtab{ 1 & 1 & 1 & 5 \\ 4 & & 3 & 3 & 3 \\ 3 & 3 & \\ & & 5 \\ 5 & 5 } 
    \end{array}
    \end{displaymath}
\caption{\label{fig:label}The labeling algorithm applied to a Kohnert diagram for $(3,0,5,1,4)$.}
\end{figure}

To handle the ambiguity where a given diagram is a Kohnert diagram for multiple weak compositions, Assaf and Searles \cite[Def~2.3]{AS18} define a \emph{Kohnert tableaux} to be labelings that carry the information of the weak composition as well.

\begin{definition}[\cite{AS18}]
  For a weak composition $\comp{a}$ of length $n$, a \newword{Kohnert tableau of content $\comp{a}$} is a diagram labeled with $1^{a_1}, 2^{a_2}, \ldots, n^{a_n}$ satisfying
  \begin{enumerate}[label=(\roman*)]
  \item \label{i:label} there is exactly one $i$ in each column from $1$ through $a_i$;
  \item \label{i:flag} each entry in row $i$ is at least $i$;
  \item \label{i:descend} the cells with entry $i$ weakly ascend from left to right;
  \item \label{i:invert} if $i<j$ appear in a column with $i$ below $j$, then there is an $i$ in the column immediately to the right of and strictly below $j$.   
  \end{enumerate}
  \label{def:kohnert-tableaux}
\end{definition}

For example, Figure~\ref{fig:KT} shows the $11$ Kohnert tableaux of content $(0,1,2,1)$.

\begin{figure}[ht]
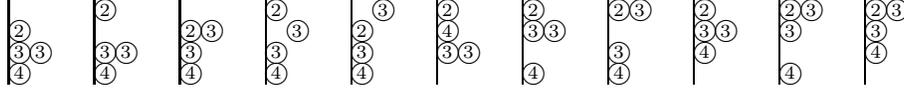

    \begin{displaymath}
    \arraycolsep=\cellsize
    \begin{array}{ccccccccccc}
      \cirtab{ & \\ 2 & \\ 3 & 3 \\ 4 } & 
      \cirtab{ 2 & \\ & \\ 3 & 3 \\ 4 } & 
      \cirtab{ & \\ 2 & 3 \\ 3 & \\ 4 } & 
      \cirtab{ 2 & \\ & 3 \\ 3 & \\ 4 } & 
      \cirtab{ & 3 \\ 2 & \\ 3 & \\ 4 } & 
      \cirtab{ 2 & \\ 4 & \\ 3 & 3 \\ } &
      \cirtab{ 2 & \\ 3 & 3 \\ & \\ 4 } & 
      \cirtab{ 2 & 3 \\ & \\ 3 & \\ 4 } & 
      \cirtab{ 2 & \\ 3 & 3 \\ 4 & \\ } & 
      \cirtab{ 2 & 3 \\ 3 & \\ & \\ 4 } & 
      \cirtab{ 2 & 3 \\ 3 & \\ 4 & \\ }  
    \end{array}
    \end{displaymath}
\caption{\label{fig:KT}The Kohnert tableaux of content $(0,1,2,1)$.}
\end{figure}

Assaf and Searles \cite[Thm~2.8]{AS18} prove for any diagram $T$ and weak composition $\comp{a}$ for which $T$ and $\D(\comp{a})$ have the same column weight, $T\in\KD(\D(\comp{a}))$ if and only if $\Label_{\comp{a}}(T)$ is a Kohnert tableau. Thus the labeling algorithm provides the bijection between Kohnert tableaux  of content $\comp{a}$ and Kohnert diagrams for $\comp{a}$.

In fact, the algorithm ensures conditions \ref{i:label}, \ref{i:descend}, and \ref{i:invert} always hold, so we can restate this result as follows.

\begin{proposition}[\cite{AS18}]
  Given a diagram $T$ and weak composition $\comp{a}$ for which $T$ and $\D(\comp{a})$ have the same column weight, $T\in\KD(\D(\comp{a}))$ if and only if each entry in row $i$ of $\mathcal{L}_{\comp{a}}(T)$ is at least $i$.
  \label{prop:flagged}
\end{proposition}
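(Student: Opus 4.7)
The plan is to treat this as a corollary of the full characterization theorem from Assaf--Searles [AS18, Thm~2.8], which states that (for $T$ and $\D(\comp{a})$ of matching column weight) $T\in\KD(\D(\comp{a}))$ if and only if $\Label_{\comp{a}}(T)$ is a Kohnert tableau, i.e.\ satisfies all four conditions \ref{i:label}--\ref{i:invert} of Definition~\ref{def:kohnert-tableaux}. Granting that, the proposition reduces to verifying that \ref{i:label}, \ref{i:descend}, and \ref{i:invert} hold automatically for any output of the labeling algorithm, leaving only the flag condition \ref{i:flag} as the nontrivial criterion.

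For \ref{i:label}, I would read it off the algorithm directly: the set of labels distributed among the cells of column $c$ is exactly $\{i : a_i \geq c\}$, so each label $i$ appears once in each of the columns $1,\ldots, a_i$. For \ref{i:descend}, the selection rule explicitly requires that the $i$ chosen in column $c$ lie weakly below the $i$ already placed in column $c+1$ (when the latter exists), which is precisely the statement that cells labeled $i$ weakly ascend from left to right.

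The substantive point is \ref{i:invert}. Suppose labels $i<j$ both appear in column $c$ with $i$ strictly below $j$. Since the algorithm fills column $c$ top-to-bottom, always choosing the smallest admissible label, the placement of $j$ above $i$ forces that $i$ was rejected at $j$'s cell. Now if $a_i = c$, then $i$ does not occur in column $c+1$, so the constraint is vacuous and $i$ would have been selected at $j$'s cell, contradicting the assumption that $j$ sits strictly above $i$. Hence $a_i > c$, so an $i$ is present in column $c+1$, and the only way it could have disqualified $i$ as a choice for $j$'s cell is that it lies strictly below $j$'s row. This is exactly what condition \ref{i:invert} demands.

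I expect this case analysis to be the only real technical point; the other two conditions follow essentially by inspection of Definition~\ref{def:labelling_algorithm}. With \ref{i:label}, \ref{i:descend}, \ref{i:invert} established in general, the cited theorem of Assaf--Searles reduces $T \in \KD(\D(\comp{a}))$ to the single condition that every entry in row $i$ of $\Label_{\comp{a}}(T)$ is at least $i$, which is what the proposition asserts.
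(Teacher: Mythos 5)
Your proposal is correct and follows the same route as the paper: the paper likewise treats this as a restatement of \cite[Thm~2.8]{AS18}, remarking (without detail) that the labeling algorithm automatically guarantees conditions \ref{i:label}, \ref{i:descend}, and \ref{i:invert}, so that only the flag condition \ref{i:flag} is at stake. Your verification of \ref{i:invert} via the greedy ``smallest admissible label'' rule simply fills in the justification the paper leaves implicit.
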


To state the analogous definitions for northwest diagrams, we must use \emph{rectification} \cite[Def~4.2.4]{Ass-KC}. Rectification maps any diagram to one that is a Kohnert diagram for a composition. Essentially, rectification is the result of transposing a diagram, applying all possible Kohnert lowering operators (in any order), and then transposing back. 

\begin{definition}[\cite{Ass-KC}]
  For $T$ a diagram and $i \geq 1$ an integer, we \newword{column $i$-pair} cells of $T$ in columns $i,i+1$ iteratively by column $i$-pairing an unpair cell in column $i+1$ with an unpaired cell in column $i$ weakly above it whenever all cells in columns $i$ and $i+1$ that lie strictly between them are already column $i$-paired.
  \label{def:KD-cpair}
\end{definition}

It follows from the column pairing rule and \cite[Lemma~2.2]{AS18} that a diagram is a Kohnert diagram for some weak composition if and only if for all $i$, all cells in column $i+1$ are column $i$-paired.

\begin{definition}[\cite{Ass-KC}]
  For $T$ a diagram and $i \geq 1$ an integer, the \newword{rectification operator} $\Rect_i$ acts on $T$ as follows: 
  \begin{itemize}
      \item if all cells in column $i+1$ of $T$ are column $i$-paired, then $\Rect_i(T)=T$;
      \item otherwise, $\Rect_i$ moves the lowest unpaired cell in column $i+1$ to column $i$.
  \end{itemize}
  \label{def:KD-rect}
\end{definition}

Just as applying any terminal sequence of raising operators results in a unique highest weight element on a crystal, Assaf \cite[Lemma~4.3.3]{Ass-KC} proves the following is well-defined.

\begin{definition}[\cite{Ass-KC}]
  Given a diagram $T$, the \newword{rectification of $T$}, denoted by $\Rect(T)$, is the unique  diagram obtained by applying any terminal sequence of rectification operators to $T$.
  \label{def:embed-KT}
\end{definition}

Rectification is a powerful tool for studying Kohnert crystals precisely because it intertwines the crystal operators \cite[Thm~4.2.5]{Ass-KC}. Moreover, it facilitates a generalization of the labeling algorithm and Kohnert tableaux.

Given diagrams $T,D$ of the same column weight, following \cite[Def~5.1.8]{Ass-KC}, we label $T$ with respect to $D$ by a greedy algorithm that uses rectification to follow the Assaf--Searles labeling algorithm. For this, given a diagram $T$ and a column $c$, partition $T$ at column $c$ by $T_{\leq c} \sqcup T_{>c}$, where the former contains cells in columns weakly left of $c$, and the latter contains  cells in columns strictly right of $c$. 

\begin{definition}[\cite{Ass-KC}]
  For diagrams $T,D$ of the same column weight, construct the \newword{Kohnert labeling of $T$ with respect to $D$}, denoted by $\Label_{D}(T)$, as follows. Once all columns of $T$ right of column $c$ have been labeled, set $\displaystyle T' = T_{\leq c} \ \sqcup \ \Rect(T_{>c})$, where the leftmost occupied column of the latter is $c+1$. Bijectively assign labels
  \[\{r \mid D \text{ has a cell in column $c$, row $r$ } \}\]
  to cells in column $c$ of $T$ from smallest to largest by assigning label $r$ to the lowest unlabeled cell $x$ such that if there exists a cell $z$ in column $c+1$ of $\Rect(T_{>c})$ with label $r$, then $x$ lies weakly above $z$.
  \label{def:labeling}
\end{definition}

Generalizing the characterization for left-justified diagrams to northwest diagrams, combining Theorems~5.2.2 and 5.2.4 from \cite{Ass-KC} proves the following.

\begin{theorem}[\cite{Ass-KC}]
  For $D$ a northwest diagram, $\Label_{D}$ is well-defined and flagged for $T$ if and only if $T\in\KD(D)$.
  \label{thm:nw-KT}
\end{theorem}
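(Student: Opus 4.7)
The plan is to prove both directions by induction, processing columns from right to left in accordance with the structure of the labeling algorithm, and using the left-justified case (Proposition~\ref{prop:flagged}) as the foundation via rectification.

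For the forward direction ($T \in \KD(D) \Rightarrow \Label_D(T)$ is well-defined and flagged), I would induct on the number of Kohnert moves needed to reach $T$ from $D$. The base case $T = D$ is immediate: the algorithm labels each cell $(r,c)$ with $r$, which is trivially flagged. For the inductive step, suppose $T'$ is obtained from $T$ by a Kohnert move on the rightmost cell $x$ of row $s$ in some column $c$, carrying $x$ up to row $q < s$. For columns weakly right of $c$, the partitions $T_{\leq b} \sqcup \Rect(T_{>b})$ and $T'_{\leq b} \sqcup \Rect(T'_{>b})$ coincide for $b \ge c$, so the previously assigned labels persist. At column $c$ itself, the multiset of labels to assign is unchanged, and one argues that the label previously borne by $x$ in $\Label_D(T)$ can be reassigned to $x$ at its new (higher) row $q$ while preserving monotonicity against $\Rect(T'_{>c}) = \Rect(T_{>c})$, with the flag condition maintained because $q$ is strictly larger than the old label. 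For columns strictly left of $c$, the modified rectification of the right part must be tracked; a case analysis, leveraging the northwest property of $D$, shows the greedy assignment continues to succeed and remain flagged.

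For the reverse direction ($\Label_D(T)$ well-defined and flagged $\Rightarrow T \in \KD(D)$), I would induct on the total upward displacement $\sum_x (\mathrm{row}_T(x) - \ell(x))$, where $\ell(x)$ is the label assigned to $x$ by $\Label_D(T)$. If this quantity is zero, then every cell sits at the row dictated by its label, forcing $T = D$. Otherwise, some cell lies strictly above its label, and I would identify a specific reverse Kohnert move producing a diagram $T''$ that still admits a well-defined, flagged labeling with respect to $D$ and has strictly smaller total displacement. Iterating these reverse moves yields a sequence terminating at $D$; reversing the sequence exhibits $T$ as a Kohnert diagram of $D$.

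The principal obstacle is controlling how a single Kohnert move (or its reverse) in column $c$ alters $\Rect(T_{>b})$ for $b < c$, since that rectification governs the labeling of every column to its left; a purely local modification can cascade through the labels of all leftward columns in subtle ways. The northwest hypothesis on $D$ is precisely the structural condition that tames this cascade: whenever a label $r$ must appear in column $c$ because it appears in column $c+1$, the diagram $D$ already contains a cell in row $r$ of column $c$ to serve as an anchor for the greedy assignment. Without northwest-ness, this cross-column compatibility breaks and the algorithm can fail or produce a non-flagged output. Rigorously managing the simultaneous interplay of Kohnert moves, rectification, and the greedy labeling — so that the induction on displacement in the reverse direction and the step-by-step preservation in the forward direction both go through — is the technical heart of the argument.
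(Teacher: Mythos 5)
First, a point of comparison: the paper does not prove Theorem~\ref{thm:nw-KT} at all --- it is imported from \cite{Ass-KC}, combining Theorems~5.2.2 and~5.2.4 there --- so there is no in-paper argument to measure yours against. Judged on its own, your outline follows the natural strategy (essentially the one used for the left-justified case of Proposition~\ref{prop:flagged} in \cite{AS18}, lifted via rectification), but it leaves genuine gaps at exactly the two places you yourself call the ``technical heart,'' so it is a roadmap rather than a proof.

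In the forward direction, the inductive step is not carried out. The claim that at column $c$ one can simply reassign to the moved cell $x$ ``the label previously borne by $x$'' is unjustified and false in this naive form: moving a single cell up within a column can permute the greedy assignment of several labels in that column, and the real issue is re-establishing \emph{well-definedness} of the bijective assignment (that the greedy step never gets stuck), which is where the northwest hypothesis enters through nontrivial lemmas. More seriously, for columns $b<c$ the labels are computed against $\Rect(T'_{>b})$, and controlling how one Kohnert move propagates through rectification is precisely the content of the machinery in \cite{Ass-KC} (rectification intertwining crystal moves, and the compatibility of $\Label_D$ with rectification) that the present paper itself invokes elsewhere, e.g.\ in the proof of Theorem~\ref{thm:close_down}; your proposal names this cascade as the obstacle but supplies no argument for it. In the reverse direction you assert, without any construction, that whenever some cell sits strictly above its label there is a reverse Kohnert move that decreases total displacement while keeping $\Label_D$ well-defined and flagged; exhibiting such a move (the pushed-down cell must become the rightmost cell of its row, with the forward move recovering $T$) and proving the labeling survives it is the entire difficulty, not a routine check. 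Two minor but symptomatic slips: the flag condition says each label is at least its row index, so the displacement should be $\sum_x\bigl(\ell(x)-\mathrm{row}_T(x)\bigr)$, and after a Kohnert move the new row $q$ is \emph{smaller} than the old label, not larger as written. To make the proposal rigorous you would essentially have to redevelop the rectification/labeling compatibility results of \cite{Ass-KC}, which is why the paper cites that source rather than proving the theorem.
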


%%%%%%%%%%%%%%%%%%%%%%%%%%%%%%%%%%%%%%%%%%%%%%%%%%%%%%%%%%%%%%%%
\subsection{Closure under lowering operators}
%%%%%%%%%%%%%%%%%%%%%%%%%%%%%%%%%%%%%%%%%%%%%%%%%%%%%%%%%%%%%%%%
\label{sec:labeling-lower}

By Theorem~\ref{thm:nw-KT}, we can prove our main result by showing $\Label_{D}$ is well-defined and flagged for $f_r(T)$ whenever $T\in\KD(D)$ for $D$ northwest with row $r$ a subset of row $r+1$. For this we focus on the labels $r,r+1$.

\begin{lemma}\label{lem:labeling-restriction}
Let $D$ be a northwest diagram, and let $r$ be a row index such that for every cell in row $r$ there is a cell directly below it row $r+1$. Let $T$ be a diagram with the same column weights as $D$. Then in the Kohnert labeling $\mathcal{L}_D(T)$ of $T$ with respect to $D$, the label $r$ always appears above the label $r+1$ within columns.
\end{lemma}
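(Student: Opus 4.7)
My approach is induction on the column index $c$, going right to left, matching the order in which the labeling algorithm of Definition~\ref{def:labeling} processes columns. Throughout, I will write ``$r$ above $r+1$ in column $c$'' to mean the cell assigned label $r$ in column $c$ has smaller row index than the cell assigned label $r+1$ there.

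For the base case, consider the rightmost occupied column $c_0$ of $T$. Here $T_{>c_0} = \varnothing$, so $\Rect(T_{>c_0})$ is empty and the ``weakly above $z$'' clause in Definition~\ref{def:labeling} is vacuous for every label. The labels assigned to column $c_0$ of $T$ thus receive no constraint, and are placed to be weakly increasing in a canonical way which in particular puts the smaller label $r$ above the larger label $r+1$ when both are present.

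For the inductive step, assume the claim holds for every column strictly to the right of $c$ in the labeling $\mathcal{L}_D(T)$, and also (by propagating through the rectification in the statement of the algorithm) in column $c+1$ of $\Rect(T_{>c})$. Suppose both labels $r$ and $r+1$ are assigned in column $c$, to cells $x$ and $y$ respectively. The crucial observation is: by the hypothesis on $D$, any column $c+1$ in which $D$ contains a cell in row $r$ also contains one in row $r+1$, so if label $r$ appears in column $c+1$ of $\Rect(T_{>c})$, then label $r+1$ appears there too. Combined with the inductive hypothesis, the cell $z_r$ in column $c+1$ of $\Rect(T_{>c})$ labeled $r$ lies strictly above the cell $z_{r+1}$ labeled $r+1$ whenever both exist.

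Now argue by contradiction: suppose $y$ lies strictly above $x$. Tracing the greedy algorithm, since $r < r+1$ and $r$ was passed over at $y$ in favor of being later assigned to $x$, the constraint must have failed at $y$; that is, $z_r$ exists and lies strictly below $y$. By the observation above, $z_{r+1}$ then exists and lies strictly below $z_r$, hence strictly below $y$. But assigning $r+1$ to $y$ requires $y$ to lie weakly above $z_{r+1}$, a contradiction.

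The main obstacle is justifying that the inductive hypothesis transfers from column $c+1$ of $\mathcal{L}_D(T)$ to column $c+1$ of $\Rect(T_{>c})$. This requires using that rectification intertwines with the labeling structure on Kohnert diagrams (a known feature of $\Rect$ from \cite{Ass-KC}) and that the row-containment hypothesis for $D$ restricts appropriately to the diagrams arising from $T_{>c}$; specifically, rectification only moves cells horizontally within rows, so the row positions of labels $r$ and $r+1$ are preserved in the rectified diagram, keeping the inductive statement intact. Verifying this carefully, together with the northwest property which ensures $\Rect$ behaves predictably, is the technical heart of the argument.
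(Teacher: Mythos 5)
Your overall strategy is the same as the paper's (right-to-left induction on columns, using that the northwest condition together with row $r\subseteq$ row $r+1$ forces the labels $r$ and $r+1$ to occur in pairs, then a greedy-assignment contradiction), but there is a genuine gap exactly at the point you call the technical heart. To run your inductive step you need two facts about column $c+1$ of $\Rect(T_{>c})$: that the presence of label $r$ there forces the presence of label $r+1$, and that $r$ sits above $r+1$ there. Your justification --- that rectification only moves cells within their rows, so the row positions of the labels are preserved and the inductive statement transfers --- does not establish either fact. First, the statement being propagated is about the relative order of $r$ and $r+1$ \emph{within a single column}, and rectification merges cells coming from different columns of $T_{>c}$ into column $c+1$; knowing the order inside each original column of $\Label_D(T)$ says nothing a priori about the joint presence or the order of $r$ and $r+1$ in a column of the rectified diagram. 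Second, rectification of a labeled diagram is label rectification in the sense of \cite{Ass-KC}, which can exchange labels between cells as a cell is pushed left (this is exactly what happens in case (C0)(D2) of the proof of Lemma~\ref{lem:row_containment}), so labels do not simply ride along in their rows. Persistence of the property ``every column containing an $r$ contains an $r+1$ weakly below it'' under rectification is precisely the content of the paper's separate Lemma~\ref{lem:row_containment}, proved by a substantial case analysis that itself invokes Lemma~\ref{lem:labeling-restriction}; your proposal assumes this where it is needed. The paper's own proof of the present lemma instead argues directly about consecutive columns of the labeling: its base case is the rightmost column of $D$ containing a cell in row $r$ (so no label $r$ occurs further right), and its inductive step uses only that the candidate cells for $r$ and $r+1$ coincide when neither label occurs to the right, together with the order in which the greedy algorithm assigns them.

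A secondary problem is an inconsistency in the direction of the labeling constraint in your contradiction. You first deduce that the constraint failing at $y$ for label $r$ means $z_r$ lies strictly below $y$; this uses the convention that the cell receiving label $\ell$ in column $c$ must lie weakly \emph{below} the cell labeled $\ell$ in the column to its right (the convention consistent with condition \ref{i:descend} of Definition~\ref{def:kohnert-tableaux}, with Figure~\ref{fig:label}, and with the paper's proof). But your final sentence switches to ``assigning $r+1$ to $y$ requires $y$ to lie weakly above $z_{r+1}$,'' which under the first convention is exactly what you have already derived, so as written no contradiction results. With the convention fixed consistently (label paths weakly ascend left to right, labels assigned smallest to largest to the highest admissible cell), your final step and your base case both go through --- but note that your base case, like the paper's, then rests on ``unconstrained smaller labels are placed higher,'' which conflicts with the literal wording of Definition~\ref{def:labeling} as quoted and should be stated explicitly.
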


\begin{proof}
We proceed by induction on the columns of $T$. Let $c$ be the largest index for which the original diagram $D$ has a cell in row $r$, column $c$. Then in column $c$ of $\mathcal{L}_D(T)$, the label $r$ must appear above the label $r+1$ by definition, as there are no cells labeled $r$ right of column $c$. 

Since $D$ is northwest, every column left of $c$ in $D$ either has no cells in row $r$ or row $r+1$, or has cells in both rows $r$ and $r+1$. Thus for each column $b < c$ in $\mathcal{L}_D(T)$, either column $b$ will contain both labels $r$ and $r+1$, or it will contain neither label. Assume that, if column $b$ does contain both labels, then the label $r$ appears above the label $r+1$.

Consider column $b-1$ and assume it contains both labels $r$ and $r+1$; otherwise, the result follows trivially. If column $b$ does not contain the labels $r$ and $r+1$, then in column $b-1$, the candidate cells for the labels $r$ and $r+1$ are the same. Thus since we assign labels from smallest to largest, the label $r$ will be placed above the label $r+1$ in column $b-1$. 

If, on the other hand, column $b$ does contain these labels, then the label $r$ appears above the label $r+1$. In column $b-1$, the label $r$ is placed in the highest available cell which is weakly below the cell with label $r$ in column $b$, and then the label $r+1$ is placed in the analogous manner. For the label $r+1$ to end up above the label $r$ in column $b-1$ would mean that there was a cell in column $b-1$ weakly below the cell labeled $r+1$ in column $b$ which was not weakly below the cell labeled $r$ in column $b$. However, this would force the cell labeled $r+1$ in column $b$ to lie above the cell labeled $r$, contradicting the assumption.
\end{proof}

Lemma~\ref{lem:labeling-restriction} shows cells labeled $r$ are bounded by those labeled $r+1$ in $\Label_D(T)$. We next show this persists under rectification. Thus Kohnert tableaux condition \ref{i:invert} on the rectification will establish the flagged condition on $\Label_D(T)$.

\begin{lemma}\label{lem:row_containment}
 Let $D$ be a northwest diagram, and let $r$ be a row index such that for every cell in row $r$ there is a cell directly below it row $r+1$. Let $T$ be a diagram with the same column weights as $D$. Then $\Rect(\mathcal{L}_D(T))$ has at least as many cells labeled $r+1$ as cells labeled $r$.
\end{lemma}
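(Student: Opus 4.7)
The plan is a direct counting argument coupled with the observation that rectification preserves labels. First, unwind Definition~\ref{def:labeling}: in each column $c$ of $T$, the labeling $\Label_D$ bijectively assigns labels drawn from the set $\{i : (i,c) \in D\}$ to the cells of $T$ in column $c$. Summing over columns, the total number of cells of $\mathcal{L}_D(T)$ bearing label $r$ equals the number of columns $c$ with $(r,c) \in D$, i.e., the size of row $r$ of $D$; likewise, the total count of label $r+1$ equals the size of row $r+1$ of $D$.

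Second, the hypothesis that every cell in row $r$ of $D$ has a cell directly below it in row $r+1$ asserts that every column of $D$ containing a cell in row $r$ also contains a cell in row $r+1$. Hence $|\text{row } r \text{ of } D| \leq |\text{row } r+1 \text{ of } D|$, so the labeled diagram $\mathcal{L}_D(T)$ already carries at least as many cells labeled $r+1$ as labeled $r$, even before any rectification is applied.

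Third, each rectification operator $\Rect_i$ merely relocates a single cell from column $i+1$ to column $i$, and the cell retains its label throughout the move. Hence any terminal sequence of such operators, and in particular $\Rect$ itself, preserves the multiset of labels on the diagram. Therefore $\Rect(\mathcal{L}_D(T))$ inherits the same inequality on label counts, yielding the lemma.

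The main obstacle here is essentially nonexistent; the argument is pure bookkeeping. The only point to articulate carefully is that the rectification operators treat labels as passengers on cells so that no label can be introduced, merged, or deleted during the process — nothing from the more delicate positional analysis of Lemma~\ref{lem:labeling-restriction} needs to enter at this stage, although that lemma will presumably play a role in a subsequent step when checking a flagged condition on the rectified labeled diagram rather than a raw count.
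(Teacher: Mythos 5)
Your first two steps are correct: in $\mathcal{L}_D(T)$ the cells labeled $r$ occur exactly one per column $c$ with $(r,c)\in D$, so the count of label $r$ is $\wt(D)_r$, and the hypothesis on $D$ gives $\wt(D)_r \le \wt(D)_{r+1}$ immediately. The gap is the third step. The claim that each rectification operator relocates a cell ``with its label as a passenger,'' and hence that rectification preserves the label multiset, is false.

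When $\Rect_i$ slides a cell from column $i+1$ to column $i$, the set of labels that may legitimately occupy column $i$ is prescribed column by column (by $D$ in $\mathcal{L}_D(T)$, by a key diagram $\D(\comp{a})$ in the target $\mathcal{L}_{\comp{a}}(\Rect(T))$), and the relocated cell can collide with a label already present in column $i$ or violate the one-label-per-column rule. Rectification of a \emph{labeled} diagram therefore comes bundled with a nontrivial re-labeling --- the ``label rectification'' of Assaf --- and this re-labeling does change the overall multiset. A minimal counterexample: take $D = \D(0,2) = \{(2,1),(2,2)\}$ and let $T = \{(2,1),(1,2)\}\in\KD(D)$ be obtained by one Kohnert move. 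Then $\mathcal{L}_D(T)$ assigns label $2$ to both cells, so the label multiset agrees with $\wt(D)=(0,2)$. Rectification moves the cell at $(1,2)$ into column $1$, putting both cells in column $1$; since distinct cells in a column must bear distinct labels, the result cannot carry two $2$'s, and the content of $\Rect(\mathcal{L}_D(T)) = \mathcal{L}_{\comp{a}}(\Rect(T))$ is $\comp{a}=(1,1)$, not $(0,2)$. The count of label $1$ jumps from $0$ to $1$, so your invariant breaks, even though the lemma's inequality happens to survive.

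This is exactly why the paper cannot rely on a global count. Instead, it establishes and propagates the stronger columnwise property $(*)$: whenever a column contains a cell labeled $r$, it also contains a cell labeled $r+1$ weakly below it. Lemma~\ref{lem:labeling-restriction} secures $(*)$ for $\mathcal{L}_D(T)$ using the northwest hypothesis; the body of the proof then carries $(*)$ through each local step of the label rectification via a case analysis on which of the labels $r$, $r+1$ appear in the two columns involved, and the count inequality of the lemma is read off at the end. A purely numerical argument would need, at minimum, to control how the counts of labels $r$ and $r+1$ individually evolve under each label-rectification step, which is close to the work the columnwise argument already performs.
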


\begin{proof}
Say that a column $c$ of $T$ has property $(*)$ if, whenever column $c$ contains a cell with label $r$, then column $c$ also contains a cell with label $r+1$ weakly below the cell with label $r$. We now proceed by induction on the columns of $T$, showing that after we label rectify column $c$, both columns $c$ and $c+1$ satisfy $(*)$.

First observe that in the original Kohnert labeling $\mathcal{L}_D(T)$ of $T$ with respect to $D$, the result certainly holds since $D$ has more cells in row $r+1$ than in row $r$. Now assume we have label rectified up to column $c+1$, so that $(*)$ holds in all columns weakly right of $c+1$. In column $c$, we can have that neither $r$ not $r+1$ appears as a label (C0), that only $r+1$ appears as a label (C1), or that both $r$ and $r+1$ appear as labels (C2). We have the same three possibilities in column $c+1$; designate these cases as (D0), (D1), and (D2), respectively. \\

\textbf{Case (C0).}

\begin{itemize}
\item \textbf{Subcase (D0).} Nothing to check; never gain the label $r$ in either column.
\item \textbf{Subcase (D1).} Nothing to check; never gain the label $r$ in either column.
\item \textbf{Subcase (D2).} In column $c$, $(*)$ can fail if the cell labeled $r$ in column $c+1$ moves into column $c$, while the cell labeled $r+1$ in column $c+1$ does not. In column $c+1$, $(*)$ may fail if the cell labeled $r$ is not label paired, while the cell labeled $r+1$ is label paired with a cell in column $c$. By \cite[Lemma 5.2.3]{Ass-KC}, these cases are in fact equivalent. Furthermore note that in this case, the original diagram $D$ must have cells in rows $r$ and $r+1$ of column $c+1$, and have no cells in rows $r$ or $r+1$ of column $c$. This implies that $D$ cannot have cells anywhere in column $c$ below row $r$, since $D$ is northwest. As a result, the labels in column $c$ of $T$ are all $<r$.

Now let $x_0$ denote the cell in column $c+1$ labeled $r$ and let $x_1$ denote the cell in column $c+1$ labeled $r+1$. Assume $x_0$ is unpaired, and assume $x_1$ is label paired with some cell in column $c$. As noted above, the label on this latter cell is $<r$. 

Then by the first label rectifying ``rule," we find a cell $y$ in column $c$ which is label paired to a cell $z$ in column $c+1$ such that $\mathcal{L}(y) \le \mathcal{L}(x_0) < \mathcal{L}(z)$ and such that $\mathcal{L}(z)$ is maximal. (This is possible since we found such a pair in the previous paragraph.) We then swap the labels on $x_0$ and $z$ as we push $x_0$ into column $c$. As a result, column $c$ gains a cell with label $\mathcal{L}'(x_0) = \mathcal{L}(z) \ge r+1$, so $(*)$ is satisfied. In column $c+1$, $x_0$ has moved out and $z$ inherits $\mathcal{L}(y) < r$, so the column has no cell labeled $r$ and $(*)$ again holds.
\end{itemize}

\textbf{Case (C1).}

\begin{itemize}
\item \textbf{Subcase (D0).} Nothing to check; never gain the label $r$ in either column.
\item \textbf{Subcase (D1).} Nothing to check; never gain the label $r$ in either column.
\item \textbf{Subcase (D2).} Not possible, since $D$ is northwest.
\end{itemize}

\textbf{Case (C2).} In all of the following subcases, $(*)$ will hold in column $c$ since the column has both labels $r$ and $r+1$, and $r$ appears above $r+1$ by Lemma \ref{lem:labeling-restriction}.

\begin{itemize}
\item \textbf{Subcase (D0).} We could violate $(*)$ in column $c+1$ is if the cell in column $c$ with label $r$ is label paired with a cell, say $x$, in column $c+1$, but the cell in column $c$ with label $r+1$ is not label paired with a cell in column $c+1$. In this case $\mathcal{L}(x) \ge r+2$, since we must have $\mathcal{L}(x) \ge r$ and we assumed that the labels $r$ and $r+1$ do not appear in column $c+1$. But the cell labeled $r+1$ in column $c$ lies below the cell labeled $r$ by Lemma \ref{lem:labeling-restriction}, showing that $x$ would have preferred to pair with the cell labeled $r+1$. Therefore, there must have been some other cell below $x$ that was already paired with the cell labeled $r+1$ in column $c$ --- otherwise $x$ would pair to the cell labeled $r+1$, instead --- and it follows that $(*)$ is satisfied in column $c+1$ after we label rectify.

\item \textbf{Subcase (D1).} The only way we could run into trouble here is if the cell labeled $r+1$ in column $c+1$ pairs with the cell labeled $r$ in column $c$. However, in this situation, the cell labeled $r+1$ in column $c$ must have already paired with a cell in column $c+1$. Again, $(*)$ holds  in column $c+1$ after label rectifying.

\item \textbf{Subcase (D2).} Here, the cells labeled $r$ and $r+1$ --- denote them as $y_0$ and $y_1$, respectively --- in column $c$ must be label paired to cells in column $c+1$, since both labels appear in column $c+1$ weakly above the corresponding label in column $c$.

It suffices to show that the cell in column $c+1$ with which $y_0$ pairs is strictly above the cell with which $y_1$ pairs. Thus assume for a contradiction that a cell $x$ in column $c+1$ pairs with $y_0$, while $y_1$ is yet unpaired. Then since $y_0$ lies above $y_1$ in column $c$ by Lemma \ref{lem:labeling-restriction}, we must have $\mathcal{L}(y) = r$. However, by the inductive hypothesis, $x$ must lie above the cell in column $c+1$ labeled $r+1$, meaning $y_1$ must have in fact been paired already, a contradiction. Thus $y_0$ always pairs with a cell strictly above the cell with which $y_1$ pairs, and it follows that $(*)$ is satisfied in column $c+1$.
\end{itemize}

Thus property (*) holds throughout rectification and so, too, at the end.
\end{proof}

We use results about the generalized labeling algorithm to complete our proof.

\begin{theorem}
 Let $D$ be a northwest diagram, and let $r$ be a row index such that for every cell in row $r$ there is a cell directly below it row $r+1$. Then for any $T\in\KD(D)$ we have $f_r(T)\in\KD(D)$. In particular $\Dem_r(\KD(D)) = \KD(D)$.
 \label{thm:close_down}
\end{theorem}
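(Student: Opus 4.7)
My plan is to invoke Theorem~\ref{thm:nw-KT}, which reduces the task to showing that $\Label_D(f_r(T))$ is well-defined and flagged for every $T\in\KD(D)$ with $f_r(T)\neq 0$. Let $x$ be the cell that $f_r$ moves from $(r,c)$ to $(r+1,c)$. A preliminary observation is that $(r+1,c)$ must be empty in $T$: otherwise a cell there would $r$-pair with $x$, since no cells lie strictly between them, contradicting that $x$ is unpaired. Hence $T$ and $f_r(T)$ agree in every column except $c$.

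I would then compare $\Label_D(f_r(T))$ with $\Label_D(T)$ column by column, right to left. In every column $>c$ the two diagrams coincide, so the greedy algorithm produces identical labels. The substantive work is at column $c$, where the multiset of labels to assign is the same in both but the occupied cells differ at rows $r$ and $r+1$. Using the hypothesis that row $r\subseteq$ row $r+1$ in $D$, the cases split by whether $D$ has cells in rows $r,r+1$ of column $c$: both, only $r+1$, or neither. In each case, Lemma~\ref{lem:labeling-restriction} (label $r$ lies above label $r+1$ within any column of $\Label_D(T)$) and Lemma~\ref{lem:row_containment} (every column of $\Rect(\Label_D(T))$ containing label $r$ also contains label $r+1$ weakly below) would be used to show the greedy algorithm for $\Label_D(f_r(T))$ assigns every required label and places a label $\geq r+1$ at the new cell $(r+1,c)$.

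For columns $<c$, I would extend the argument inductively, propagating the labels leftward. The analogues of Lemmas~\ref{lem:labeling-restriction} and \ref{lem:row_containment} for $\Label_D(f_r(T))$ would be maintained alongside flaggedness, so no violation arises at any subsequent column. Once $f_r$-closure is proven, the second conclusion $\Dem_r(\KD(D))=\KD(D)$ follows by iteration, since $\Dem_r(X)$ is precisely the $f_r$-descendant closure of $X$ by Definition~\ref{def:demazure-op}.

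The main obstacle I anticipate is the case at column $c$ where $D$ has no cells in rows $r$ or $r+1$. Then the label $\ell$ carried by $x$ in $\Label_D(T)$ satisfies $\ell<r$, so naively placing $x$ at row $r+1$ would violate flaggedness. Resolving this requires showing that the greedy algorithm is forced to reassign to $(r+1,c)$ in $f_r(T)$ some label $\geq r+2$. The argument uses that $x$ arrived at $(r,c)$ via upward Kohnert moves from a cell of $D$ at row $\geq r+2$ in column $c$, guaranteeing such a label appears in the multiset, together with Lemma~\ref{lem:row_containment} to ensure the greedy rule positions it weakly below $(r,c)$, i.e., at or past $(r+1,c)$.
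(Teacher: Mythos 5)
Your reduction via Theorem~\ref{thm:nw-KT} is the right starting point, but the proposal has a genuine gap exactly where the difficulty lives: the columns strictly left of $c$. The greedy label of a column $b<c$ is computed from $\Rect\bigl((f_r(T))_{>b}\bigr)$, and moving one cell from $(r,c)$ to $(r+1,c)$ can change this rectification, and hence the label placements, in every column to the left of $c$; saying you would ``extend the argument inductively, propagating the labels leftward'' asserts precisely what has to be proved. You also never address well-definedness of $\Label_D(f_r(T))$ (the greedy bijection between labels and cells could a priori fail to exist), which is a separate claim from flaggedness. In addition, your ``main obstacle'' is misdiagnosed: if $D$ has no cells in rows $r,r+1$ of column $c$, then flaggedness of $\Label_D(T)$ forces the label on the cell at $(r,c)$ to be at least $r$, hence at least $r+2$ since $r,r+1$ are not in that column's label set (not $\ell<r$ as you wrote); and the labeling is a greedy algorithm on the final diagram, so an argument tracking ``which cell of $D$ the cell $x$ came from'' under Kohnert moves is not meaningful for it.

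The paper avoids all of this by never comparing labelings column by column. It invokes results of Assaf (labelings are preserved under crystal operators, rectification intertwines both the labeling and the crystal operators, and rectification preserves the flagged condition) to reduce to the rectified diagram: Lemma~\ref{lem:row_containment} is used only to conclude that the rectified content $\comp{a}$ satisfies $a_r\le a_{r+1}$, the left-justified case Lemma~\ref{lem:left-justified} then gives $f_r(\Rect(T))\in\KD(\D(\comp{a}))$, and commutation of $f_r$ with rectification transfers this back to $f_r(T)\in\KD(D)$. Your sketch uses neither Lemma~\ref{lem:left-justified} nor this commutation, so the structural shortcut that makes the proof work is missing; to complete your direct approach you would essentially have to reprove those cited results.
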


\begin{proof}
  Let $U=f_r(T)$ and assume $U\ne T$ or we are done. By \cite[Thm 5.2.2]{Ass-KC} we have $\Label_D(T)$ is well-defined and flagged. By \cite[Lemma 5.3.3]{Ass-KC}, we know $\Label_D(U)$ is well-defined as well. By \cite[Thm 5.3.2]{Ass-KC} $\Rect(\mathcal{L}_D(T))=\mathcal{L}_{\comp{a}}(\Rect(T))$ and $\Rect(\mathcal{L}_D(U))=\mathcal{L}_{\comp{b}}(\Rect(U))$ for some compositions $\comp{a}$ and $\comp{b}$, with the former a Kohnert tableau with respect to $\comp{a}$ since $\mathcal{L}_D(T)$ is flagged. This is to say that $\Rect(T)\in\mathbb{D}(\comp{a})$. Using \cite[Lemma 5.3.3]{Ass-KC} once more we also note that $\comp{a}=\comp{b}$.
  
  From Lemma \ref{lem:row_containment} we know $a_{r+1}\ge a_r$ and from the left-justified case shown in Lemma \ref{lem:left-justified} we have $f_r(\Rect(T))\in\KD(\comp{a})$. Since crystals moves commute with rectification \cite[Thm 4.2.5]{Ass-KC} we write $\Rect(U)\in\KD(\comp{a})$ which implies $\Label_{\comp{a}}(\Rect(U))=\Rect(\Label_D(U))$ is flagged. Rectification does not affect the flagged condition \cite[Lemma 5.3.1]{Ass-KC} so we finally have that $\Label_D(U)$ is flagged and therefore $U\in\KD(D)$ by \cite[Thm 5.2.4]{Ass-KC}.
\end{proof}

%%%%%%%%%%%%%%%%%%%%%%%%%%%%%%%%%%%%%%%%%%%%%%%%%%%%%%%%%%%%%%%%
\subsection{Recurrence for Kohnert polynomials}
%%%%%%%%%%%%%%%%%%%%%%%%%%%%%%%%%%%%%%%%%%%%%%%%%%%%%%%%%%%%%%%%
\label{sec:labeling-main}

We can now establish the third and final term in Magyar's recurrence, giving our main result.

\begin{theorem}\label{thm:kohnert_recur}
  The Kohnert polynomials of northwest diagrams satisfy the following recurrence:
    \begin{itemize}
      \item[(K1)] $\kohnert_{\varnothing} = 1$;
      \item[(K2)] if the first column of $D$ is exactly $C_k$, then 
      \begin{equation}\kohnert_{D} = x_1 x_2 \cdots x_k \kohnert_{D-C_k};\end{equation}
      \item[(K3)] if every cell in row $r$ has a cell in row $r+1$ in the same column, then \begin{equation}\label{e:K3}\kohnert_{D} = \pi_r \left(\kohnert_{s_r D}\right).\end{equation}
  \end{itemize}
  In particular, $\kohnert_D = \ch(\fSchur_D)$ for $D$ any northwest diagram.
\end{theorem}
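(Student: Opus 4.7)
The plan is to verify conditions (K1)--(K3) in turn, then conclude the identification $\kohnert_D = \ch(\fSchur_D)$ by comparing with Magyar's recurrence from Theorem~\ref{thm:recur}. Condition (K1) is immediate from the convention $\kohnert_\varnothing = 1$, and condition (K2) is exactly Lemma~\ref{lem:tower}, so essentially all the work goes into (K3).

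For (K3), suppose $D$ is northwest with every cell in row $r$ sitting directly above a cell in row $r+1$. By Lemma~\ref{lem:subset} the swap $s_r D$ is again northwest and $\KD(s_r D) \subseteq \KD(D)$, and Corollary~\ref{cor:nested} strengthens this to the nested containment
\begin{equation*}
\KD(s_r D) \subseteq \KD(D) \subseteq \Dem_r(\KD(s_r D)).
\end{equation*}
I would then apply the idempotent operator $\Dem_r$ to the right containment, invoke Theorem~\ref{thm:close_down} which asserts $\Dem_r(\KD(D)) = \KD(D)$, and use $\Dem_r^2 = \Dem_r$ to sandwich
\begin{equation*}
\KD(D) = \Dem_r(\KD(D)) \subseteq \Dem_r(\Dem_r(\KD(s_r D))) = \Dem_r(\KD(s_r D)) \subseteq \Dem_r(\KD(D)),
\end{equation*}
forcing $\KD(D) = \Dem_r(\KD(s_r D))$.

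Passing to characters, Theorem~\ref{thm:5.3.4} exhibits $\KD(s_r D)$ as a disjoint union of Demazure crystals, so Proposition~\ref{prop:Di} applies and gives $\ch(\Dem_r(\KD(s_r D))) = \pi_r(\ch(\KD(s_r D)))$. Unwinding the definition $\kohnert_D = \ch(\KD(D))$ yields $\kohnert_D = \pi_r(\kohnert_{s_r D})$, which is (K3).

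For the final identification with $\ch(\fSchur_D)$, I observe that both $\kohnert_D$ and $\ch(\fSchur_D)$ satisfy identical recurrences with the same base case at the empty diagram, and these recurrences terminate on northwest inputs: (K3) strictly decreases $\sum_i i \cdot \wt(D)_i$ while (K2) strictly decreases the total number of cells, and Magyar's analysis guarantees one of (K2) or (K3) applies until the diagram is empty. Induction on this termination statistic gives $\kohnert_D = \ch(\fSchur_D)$ for every northwest $D$.

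The substantive obstacle sits entirely inside (K3), and within it the hard step is the equality $\Dem_r(\KD(D)) = \KD(D)$, i.e.\ closure of $\KD(D)$ under the lowering operator $f_r$. That is already packaged as Theorem~\ref{thm:close_down}, whose proof relies on the labeling apparatus of Section~\ref{sec:labeling} (in particular Lemmas~\ref{lem:labeling-restriction} and~\ref{lem:row_containment}) together with the left-justified base case of Lemma~\ref{lem:left-justified}. Once that input is in hand, the present theorem is a short assembly of Corollary~\ref{cor:nested}, Theorem~\ref{thm:close_down}, and Proposition~\ref{prop:Di}.
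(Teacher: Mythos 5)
Your proposal is correct and follows essentially the same route as the paper: (K1) by convention, (K2) via Lemma~\ref{lem:tower}, and (K3) by applying $\Dem_r$ to the nesting of Corollary~\ref{cor:nested}, invoking Theorem~\ref{thm:close_down} and idempotence to get $\KD(D)=\Dem_r(\KD(s_r D))$, then passing to characters with Proposition~\ref{prop:Di}. The concluding comparison with Magyar's recurrence (Theorem~\ref{thm:recur}) is also how the paper finishes, with your termination remarks merely making explicit what the paper leaves implicit.
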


\begin{proof}
  We have observed (K1) by definition, and (K2) is proved in Lemma~\ref{lem:tower}. Assume, then that $D$ is northwest and every cell in row $r$ of $D$, there is a cell in row $r+1$ in the same column. By Theorem~\ref{thm:close_down}, we have $\Dem_r(\KD(D)) = \KD(D)$. Thus applying the Demazure operator to the nested sequence in Corollary~\ref{cor:nested} gives
  \begin{displaymath}
    \Dem_r(\KD(s_r D)) \ \subseteq \ \Dem_r(\KD(D)) \ \subseteq \ \Dem_r(\Dem_r(\KD(s_r D))) \ = \ \Dem_r(\KD(s_r D)),
  \end{displaymath}
  and so $\Dem_r(\KD(s_r D)) = \KD(D)$. By Proposition~\ref{prop:Di}, we have
  \[ \kohnert_D = \ch(\KD(D)) = \ch(\Dem_r(\KD(s_r D))) = \pi_r(\ch(\KD(s_r D))) = \pi_r(\kohnert_{s_r D}). \]
  Thus (K3) holds. By Theorem~\ref{thm:recur}, Kohnert polynomials and characters of flagged Schur modules satisfy the same recurrence, and so must be equal.
\end{proof}

It is worth remarking that Magyar's recurrence, Theorem~\ref{thm:recur}, holds for a more general class of shapes, namely $\%$-avoiding diagrams. A natural question is whether our result holds for this more general class. The answer is a resounding no.

\begin{definition}\label{def:pa}
  A diagram $D$ is $\%$-avoiding if whenever $(j,k),(i,l)\in D$ with $i<j$ and $k<l$, then either $(i,k)\in D$ or $(j,l)\in D$.
\end{definition}

Notice every northwest shape is, in particular, \%-avoiding. 

\begin{lemma}\label{lem:A2}
	Let $z$ be a cell in row $s$ of a diagram $D$. Let $U$ be the diagram consisting of the rows strictly above $s$ weakly below some $r<s$ plus the cells right of $z$ inclusive in row $s$. If $U$ is northwest and there exists a cell in row $r$ in the same column as $z$, then there exists no $T\in \KD(D)$ with $\wt(T)=\wt(D)+K\alpha_{r,s}$ where $K$ is the number of cells right of $z$ inclusive in row $s$.
\end{lemma}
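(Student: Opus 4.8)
The plan is to argue by contradiction: suppose some $T\in\KD(D)$ satisfies $\wt(T)=\wt(D)+K\alpha_{r,s}$, so that $T$ has $K$ more cells in row $r$ and $K$ fewer cells in row $s$ than $D$ does, with all other rows unchanged. First I would isolate the cells involved. Let $z$ sit in column $c_0$, and let $c_0 < c_1 < \cdots < c_{K-1}$ be the columns of the cells in row $s$ weakly right of $z$ in $D$. Since Kohnert moves never change column weights and only move cells upward within a column, the passage from $D$ to $T$ must, in net effect, move exactly $K$ cells out of row $s$ and into row $r$, and these must be precisely the $K$ cells in the columns $c_0,\dots,c_{K-1}$: any cell of $D$ in row $s$ in a column left of $c_0$ cannot leave row $s$ without a cell above it in that column also leaving, and the hypothesis that $U$ is northwest will be used to control what lies above. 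More carefully, because Kohnert moves act on the rightmost cell of a row, and because we are told there is a cell of $D$ in row $r$ directly above $z$ (column $c_0$), the cell of $T$ in column $c_0$ that ends up in row $r$ cannot be the original row-$r$ cell displaced upward — that original cell, together with the relocated row-$s$ cell, would create two cells in the single column $c_0$ between rows $r$ and $s$, and I would track where each must go.

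The core of the argument is a counting/obstruction statement about the subdiagram $U$. Restrict attention to the rows of $D$ strictly between $s$ (exclusive) and $r$ (inclusive) — i.e. rows $r, r+1, \dots, s-1$ — together with the row-$s$ cells weakly right of $z$; this is exactly the diagram $U$, which is assumed northwest. Every one of the $K$ cells that migrates from row $s$ up to row $r$ must pass through this window, and in particular must land in row $r$ of its column, so in $T$ each of the columns $c_0,\dots,c_{K-1}$ acquires a cell in row $r$. I would now invoke Proposition~\ref{prop:flagged} / Theorem~\ref{thm:nw-KT}: since $U$ is northwest, membership of the relevant restricted diagram in $\KD(U)$ (equivalently, of $\Rect$ applied to the restriction being a genuine Kohnert tableau for the appropriate composition) forces the Kohnert labeling to be flagged, and this flaggedness is incompatible with having $K$ cells pushed up into row $r$ while the cell above $z$ — already present in $D$ in row $r$ — has nowhere legal to go. Concretely, that pre-existing row-$r$ cell in column $c_0$ must, in $T$, occupy a position strictly above row $r$ in its column (it is displaced by the incoming cell from row $s$), but since $U$ is northwest and has a cell in row $r$, column $c_0$ only if everything northwest of it is present, one derives that column $c_0$ of $T$ would need an occupied cell above row $r$ that violates either the column-weight constraint or the flagged condition of Theorem~\ref{thm:nw-KT}.

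The main obstacle I anticipate is making the "nowhere legal to go" step fully rigorous: it requires pinning down precisely how the rightmost-cell rule for Kohnert moves, combined with the northwest structure of $U$ and the single extra cell of $D$ in row $r$ above $z$, blocks the $(K)$-fold net transfer from row $s$ to row $r$. I expect this is cleanest if I first reduce to the restricted diagram $U$ (showing $\KD$-membership restricts appropriately to windows of rows, which should follow since Kohnert moves in disjoint column-ranges don't interact and moves cannot push cells below their starting row), then apply Theorem~\ref{thm:nw-KT} to $U$ directly, so that the obstruction becomes a statement purely about Kohnert tableaux / labelings of the northwest diagram $U$ rather than about move sequences on $D$. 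The contrapositive of Proposition~\ref{prop:A1} is the right target: Proposition~\ref{prop:A1} shows such monomials \emph{do} appear in $\ch(\fSchur_D)$, and this lemma is precisely the statement that they need \emph{not} appear in $\kohnert_D$, witnessing that the northwest hypothesis in Theorem~\ref{thm:kohnert_recur} cannot be relaxed to $\%$-avoiding.
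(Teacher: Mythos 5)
Your setup matches the paper's: assume such a $T$ exists, note that row weights strictly above $r$ and strictly below $s$ are preserved so no cell ever rises above row $r$ or leaves a row below $s$, and use the rightmost-cell rule to conclude that exactly the $K$ cells of row $s$ weakly right of $z$ (including $z$ itself) are the ones that vacate row $s$. But the heart of the lemma --- \emph{why} this transfer is impossible --- is not actually established in your sketch, and the two claims you lean on are problematic. First, you assert that in $T$ each of the columns $c_0,\dots,c_{K-1}$ acquires a cell in row $r$; this does not follow, since a cell leaving row $s$ stays in its column but may stop in an intermediate row $r<k'<s$, while the $+K$ in row $r$ is supplied by other cells in other columns. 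Second, you say the pre-existing cell of $D$ in row $r$, column $c_0$ is ``displaced'' to a position strictly above row $r$; that is impossible by your own weight-preservation observation (no cell can rise above row $r$), so that cell is in fact static, and no contradiction with column weights or with the flagged condition of Theorem~\ref{thm:nw-KT} is derived. Your proposed reduction to $\KD(U)$ plus an appeal to the labeling theorem is exactly the step you flag as the ``main obstacle,'' and it is the missing idea: you never exhibit the mechanism that blocks the weight change.

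The paper's proof supplies that mechanism with an elementary infinite-descent argument, not with Kohnert labelings. Since $(r,c)$ is occupied in $D$ and static, the cell $z$ must come to rest in some row $k'$ with $r<k'<s$ of its column $c$; because row $k'$'s weight is preserved, some cell of $D$ in row $k'$ must vacate its position; the northwest property of $U$ (applied to that cell together with the cell in row $r$, column $c$) forces row $k'$ to have no cells right of column $c$, so the vacated cell lies strictly left of column $c$; repeating the argument with this new vacated cell produces a strictly decreasing sequence of columns, which is impossible. If you want to salvage your route through $U$, you would still need essentially this descent (or an equivalent obstruction) to rule out $\wt(U)+K\alpha_{r,s}$ occurring in $\KD(U)$; the flagged-labeling theorem by itself does not do it. As written, the proposal has a genuine gap at the decisive step.
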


\begin{proof}
	Assume we can in fact construct such a diagram $T$ by performing Kohnert moves on $D$. Note that no Kohnert moves could have moved a cell to a row strictly above $r$ or from a row strictly below row $s$ since we must preserve the weights of rows strictly above $r$ and also those strictly below $s$. Therefore those two regions do not change between $D$ and $T$. 
	
	Let $c$ denote the column containing $z$. Suppose there is a cell $x$ in row $k$ column $b\le c$ of $U$ but that this position is vacant in $T$. Then $x$ must have been moved by a Kohnert move so the number of cells in column $b$ strictly above $k$ must be strictly greater in $T$ than in $D$. In particular, there must be some cell $y$ in row $k'$ with $r\le k'<k$ column $b$ of $T$ whose position is vacant in $D$. The northwest condition on $x$ and the cell in row $r$ column $c$, both lying in $U$, imply there is a cell in row $r$ column $b$ of $D$, so in fact $r<k'<k$. Furthermore, the presence of $x$ along with the vacancy of row $k'$ column $b$ in $D$ imply that row $k'$ has no cells right of $b$ in $U$, hence in $D$ which has an identical row $k'$. 
	
	In order to preserve the row weight of row $k'$ between $D$ and $T$ we then must have that there is a column $b'<b$ such that row $k'$ column $b'$ has a cell $x'$ in $D$ but not in $T$. If we replace $x,k,$ and $b$ with $x'$, $k'$, and $b'$ respectively and we repeat the argument then it never terminates which would imply our diagram $T$ does not actually exist. 
	
	We know that in order to get from $D$ to $T$, precisely $K$ cells must be removed from row $s$ with none more entering. These must be the rightmost $K$ cells in $s$ which includes $z$. Thus, we begin the above contradictory process by letting $x=z$.
\end{proof}

We prove Theorem~\ref{thm:kohnert_recur} is tight with the following result.

\begin{theorem}
	If $D$ is \%-avoiding but not northwest, then $\ch(\fSchur_D) \neq \kohnert_D$.
\end{theorem}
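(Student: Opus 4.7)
The plan is to exhibit a monomial appearing in $\ch(\fSchur_D)$ but not in $\kohnert_D$, using Proposition~\ref{prop:A1} for the former and Lemma~\ref{lem:A2} for the latter. Both results concern weights of the form $\wt(D) + K\alpha_{r,s}$, so the main task is to choose $r$, $s$, and a distinguished cell $z$ in row $s$ so that the two sets of hypotheses are simultaneously satisfied.

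First I would pick a northwest violation in $D$ with two extremal properties: choose $s$ minimal among rows that occur as the lower row of some northwest violation, and then choose $r$ maximal among $i<s$ for which a violation $(r,s,k,l)$ (meaning $(r,l),(s,k)\in D$ with $k<l$ and $(r,k)\notin D$) exists. Let $c$ be the rightmost column in which row $r$ of $D$ has a cell, and set $z=(s,c)$. The cell $z$ actually lies in $D$: since $k<l\leq c$, the \%-avoiding property applied to the pair $(s,k),(r,c)\in D$ together with $(r,k)\notin D$ forces $(s,c)\in D$.

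The main obstacle is verifying the hypotheses of Lemma~\ref{lem:A2}, namely that the region $U$ built from rows $r,r+1,\ldots,s-1$ of $D$ together with the cells of row $s$ weakly right of column $c$ is northwest. Suppose $U$ contained a northwest violation. If both cells lay strictly above row $s$, they would witness a violation in $D$ with lower row less than $s$, contradicting minimality of $s$. Otherwise the bottom cell lies in row $s$, hence in some column $\geq c$; by maximality of $r$ the upper cell must lie in row $r$, but then it would sit in a column strictly greater than $c$, contradicting the definition of $c$. Hence $U$ is northwest, and since $(r,c)\in D$ by construction, Lemma~\ref{lem:A2} yields that no $T\in\KD(D)$ has weight $\wt(D)+K\alpha_{r,s}$, where $K$ is the number of cells in row $s$ weakly right of $z$.

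To match this forbidden weight against Proposition~\ref{prop:A1}, I would take $C=\{q>c:(s,q)\in D\}\cup\{k\}$ for the same column $k$ from the chosen violation. Each $q>c$ has no cell in row $r$ by definition of $c$, and $(r,k)\notin D$ by the violation, so $C$ is a valid choice; disjointness of the union follows from $k<l\leq c$, so $|C|=K$. Proposition~\ref{prop:A1} then produces the monomial $x^{\wt(D)+K\alpha_{r,s}}$ in $\ch(\fSchur_D)$, which is precisely the weight excluded from $\kohnert_D$, yielding $\ch(\fSchur_D)\neq\kohnert_D$.
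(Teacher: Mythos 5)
Your proposal is correct and follows essentially the same strategy as the paper: exhibit the weight $\wt(D)+K\alpha_{r,s}$ in $\ch(\fSchur_D)$ via Proposition~\ref{prop:A1} and exclude it from $\kohnert_D$ via Lemma~\ref{lem:A2}. Your extremal choices ($s$ minimal, then $r$ maximal, with $z$ placed directly below the rightmost cell of row $r$, a cell of $D$ by \%-avoidance) differ in detail from the paper's minimally separated violation and its cell $y$, but they satisfy the hypotheses of Lemma~\ref{lem:A2} just as well---in fact the verification that $U$ is northwest and that row $r$ has a cell above $z$ comes out somewhat more directly.
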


\begin{proof}
    Pick rows $r$ and $s$ with $r<s$ that exhibit a violation of the northwest condition and are minimally separated. That is, the only violations of the northwest condition in the diagram consisting of rows weakly between $r$ and $s$ occur with cells in the rows $r$ and $s$ themselves. Let $y$ be the rightmost cell in row $s$ for which there is a cell strictly right of its column $c$ in row $r$, but not in column $c$ of row $r$. Let $K$ be the number of cells of $D$ in row $s$ weakly right of $y$ for which there is not a corresponding cell in the same column of row $r$. By Proposition~\ref{prop:A1} we have that $x^{\wt(D)+K\alpha_{r,s}}$ is a monomial in the monomial expansion of $\ch(\fSchur_D)$. It now suffices to show that $x^{\wt(D)+K\alpha_{r,s}}$ is not a monomial in $\kohnert_D$.
	
	Let $z$ be $K$th cell from the right in row $s$. Due to the \%-avoiding condition and the fact that there is not a cell in the same column as $y$ of row $r$, we know that for any cell in row $r$ right of $y$, of which there is at least one by assumption, there is a corresponding cell in the same column of row $s$. This corresponding cell in row $s$ is not one of those counted in the definition of $K$. Therefore the number of cells weakly right of $y$ in row $s$ is strictly greater than $K$ which is to say that $z$ lies strictly to the right of $y$.
	
	Now let $U$ be the diagram of $D$ consisting only of the rows strictly above $s$ weakly below $r$, and additionally the cells weakly right of $z$ in row $s$. By choice of $r,s$ and $y$ we know that $U$ is northwest. Moreover, if there were no cell in row $r$ of the same column as $z$, then since $U$ is northwest there would be no cells right of this column in row $r$ in either $U$ or $D$. However, this would mean that the $K$ cells weakly right of $z$ are precisely those cells weakly right of $y$ that do not have a corresponding cell in the same column of row $r$, which is impossible because $y$ is included in this count but lies left of $z$. So instead there must be a cell in row $r$ in the same column as $z$. By Lemma \ref{lem:A2} there is then no $T\in\KD(D)$ with $\wt(T)=\wt(D)+K\alpha_{r,s}$ and therefore $x^{\wt(D)+K\alpha_{r,s}}$ does not appear in the monomial expansion of $\kohnert_D$.
\end{proof}

%%%%%%%%%%%%%%%%%%%%%%%%%%%%%%%%%%%%%%%%%%%%%%%%%%%%%%%%%%%%%%%%
%
\section*{Acknowledgments}
%
%%%%%%%%%%%%%%%%%%%%%%%%%%%%%%%%%%%%%%%%%%%%%%%%%%%%%%%%%%%%%%%%

The authors are grateful to Peter Kagey for sharing insights on this project.

%%%%%%%%%%%%%%%%%%%%%%%%%%%%%%%%%%%%%%%%%%%%%%%%%%%%%%%%%%%%
%
%  Bibliography
%
%%%%%%%%%%%%%%%%%%%%%%%%%%%%%%%%%%%%%%%%%%%%%%%%%%%%%%%%%%%%

\bibliographystyle{amsalpha}
\bibliography{kohnertmodules}

\end{document}